\long\def\symbolfootnote[#1]#2{\begingroup%
\def\thefootnote{\fnsymbol{footnote}}\footnote[#1]{#2}\endgroup}
\def\aas{\textrm{a.a.s.}}
\def\R{\mathbb{R}}
\newif\ifhide
\newif\ifexpose
\def\poly{\textrm{poly}}
\def\erfi{\mathrm{erfi}}
\def\aut{\textrm{aut}}
\newcommand{\N}{\mathbb{N}}
\def\B{B}
\def\A{\mathcal{A}}
\def\E{\mathcal{E}}
\def\G{\mathcal{G}}
\def\Q{\mathcal{Q}}
\def\GG{\mathbb{G}}
\def\MM{\mathbb{M}}
\def\TF{\mathbb{TF}}
\def\expec#1{{\mathbb E[#1]}\,}
\def\eventcnd#1#2{\{#1\,|\,#2\}}
\def\event#1{\{#1\}}
\def\probcnd#1#2{\textrm{Pr}[#1\,|\,#2]}
\def\prob#1{\textrm{Pr}[#1]}
\def\floor#1{\lfloor #1 \rfloor}
\def\ceil#1{\lceil #1 \rceil}
\date{}
\title{Triangle-free subgraphs at the triangle-free process}
\author{{Guy Wolfovitz\thanks{Department of Computer Science, 
Haifa University, Haifa, Israel. Email address:
{\tt gwolfovi@cs.haifa.ac.il}.}}}
\newtheorem{theorem}{Theorem}[section]
\newtheorem{lemma}[theorem]{Lemma}
\newtheorem{claim}[theorem]{Claim}
\newtheorem{definition}{Definition}
\newtheorem{corollary}[theorem]{Corollary}
\newtheorem{proposition}[theorem]{Proposition}
\newtheorem{fact}[theorem]{Fact}
\renewcommand{\epsilon}{\varepsilon}
\DeclareMathOperator{\var}{Var}
\DeclareMathOperator{\Cov}{Cov}
\newtheoremstyle{upright}%
        {8pt plus2pt minus4pt}%
        {8pt plus2pt minus4pt}%
        {\upshape}%
        {}%
        {\bfseries}%
        {:}%
        {1em}%
        {}%
\theoremstyle{upright}
\newtheorem{remark}[theorem]{Remark}
\newcommand{\ignore}[1]{}
\begin{document}

\maketitle

\begin{abstract}
We consider the triangle-free process: given an integer $n$, start by taking a
uniformly random ordering of the edges of the complete $n$-vertex graph $K_n$.
Then, traverse the ordered edges and add each traversed edge to an (initially
empty) evolving graph - unless its addition creates a triangle.  We study the
evolving graph at around the time where $\Theta(n^{3/2 + \epsilon})$ edges have
been traversed for any fixed $\epsilon \in (0,10^{-10})$. At that time and for
any fixed triangle-free graph $F$, we give an asymptotically tight estimation
of the expected number of copies of $F$ in the evolving graph. For $F$ that is
balanced and have density smaller than $2$ (e.g., for $F$ that is a cycle of
length at least $4$), our argument also gives a tight concentration result for
the number of copies of $F$ in the evolving graph.
Our analysis combines Spencer's original branching process approach for
analysing the triangle-free process and the semi-random method.
\end{abstract}

\section{Introduction} \label{sec:1}
In this paper we consider the triangle-free process. This is a random greedy
process that generates a triangle-free graph as follows.  Given $n \in \N$,
take a uniformly random ordering of the edges of the complete $n$-vertex graph
$K_n$. Here, we take that ordering as follows.  Let $\beta : K_n \to [0,1]$ be
chosen uniformly at random; order the edges of $K_n$ according to their
\emph{birthtimes} $\beta(f)$ (which are all distinct with probability $1$),
starting with the edge whose birthtime is smallest.  Given the ordering,
traverse the ordered edges and add each traversed edge to an evolving
(initially empty) triangle-free graph, unless the addition of the edge creates
a triangle.
When all edges of $K_n$ have been exhausted, the process ends. Denote by
$\TF(n)$ the triangle-free graph which is the result of the above process.
Further, denote by $\TF(n,p)$ the intersection of $\TF(n)$ with $\{f : \beta(f)
\le p\}$.

For a graph $F$, let $X_F$ be the random variable that counts the number of
copies of $F$ in $\TF(n,p)$.  We use $e_F$ and $v_F$ to denote respectively the
number of edges and vertices in a graph $F$ and set $\aut(F)$ to be the number
of automorphisms of $F$. A graph $F$ is \emph{balanced} if $e_F/v_F \ge
e_H/v_H$ for all $H \subseteq F$ with $v_H \ge 1$. We say that an event holds
\emph{asymptotically almost surely} (\emph{\aas}) if the probability of the
event goes to $1$ as $n \to \infty$.  For $m_1 = m_1(n)$, $m_2 = m_2(n)$, we
write $m_1 \sim m_2$ if $m_1/m_2$ goes to $1$ as $n \to \infty$.  Let $\ln n$
denote the natural logarithm of $n$.  Our main result follows.
\begin{theorem}
\label{thm:main}
Fix a triangle-free graph $F$ and $\epsilon \in (0, 10^{-10})$. 
For some $p \sim n^{\epsilon - 1/2}$,
\begin{eqnarray*}
\expec{X_F} \sim \frac{v_F!}{\aut(F)}\binom{n}{v_F}\bigg(\frac{\ln n^\epsilon}{n}\bigg)^{e_F/2}.
\end{eqnarray*}
\end{theorem}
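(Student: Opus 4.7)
By symmetry and linearity of expectation,
\[
\expec{X_F} = \frac{v_F!}{\aut(F)}\binom{n}{v_F} \cdot p_F,
\]
where $p_F$ denotes the probability that a fixed labelled copy $F^*$ of $F$, embedded on a prescribed set of $v_F$ vertices of $[n]$, is contained in $\TF(n,p)$. It therefore suffices to prove $p_F \sim (\ln n^\epsilon / n)^{e_F/2}$. Note that the $F = K_2$ case of this estimate is equivalent to Spencer's classical result on the edge density of $\TF(n,p)$: writing $Q(\beta)$ for the approximate probability that an edge born at time $\beta$ survives the triangle-free process, that result can be phrased as $\int_0^p Q(\beta)\,d\beta \sim \sqrt{\ln n^\epsilon / n}$ for the specific value of $p$ in the theorem.

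To pass to general triangle-free $F$, list the edges of $F^*$ as $f_1,\dots,f_{e_F}$ with birthtimes $\beta_i = \beta(f_i)$. Since $(\beta_1,\dots,\beta_{e_F})$ is uniform on $[0,1]^{e_F}$, one has
\[
p_F = \int_{[0,p]^{e_F}} \probcnd{f_1,\dots,f_{e_F} \in \TF(n)}{\beta_1,\dots,\beta_{e_F}}\, d\beta_1\cdots d\beta_{e_F}.
\]
The heart of the argument is to show that this conditional joint survival probability factorises asymptotically:
\[
\probcnd{f_1,\dots,f_{e_F} \in \TF(n)}{\beta_1,\dots,\beta_{e_F}} \sim \prod_{i=1}^{e_F} Q(\beta_i),
\]
uniformly over almost all $(\beta_1,\dots,\beta_{e_F}) \in [0,p]^{e_F}$. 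Given this factorisation, integration yields
\[
p_F \sim \Bigl(\int_0^p Q(\beta)\,d\beta\Bigr)^{e_F} \sim \Bigl(\frac{\ln n^\epsilon}{n}\Bigr)^{e_F/2},
\]
as required.

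The main obstacle is proving the factorisation, and this is where one would combine Spencer's branching process approach with the semi-random method, as announced in the abstract. The plan is: (i) discretise the process by exposing edges in short time-windows $[t, t+dt]$; (ii) for each fixed edge $f_i$, control its survival by a Spencer-style branching tree whose nodes correspond to paths of length two that could close a triangle with $f_i$; and (iii) condition on $f_1,\dots,f_{e_F}$ being present at the prescribed birthtimes, then show that this conditioning perturbs the bulk of each branching tree only along $O(1)$-many branches, since $V(F^*)$ has size $v_F = O(1)$ while each tree draws witnesses from $\Theta(n)$ candidates outside $V(F^*)$. The genuinely delicate point is controlling the short-range correlations between the $f_i$: a vertex $w \notin V(F^*)$ may produce a potential triangle-closing danger simultaneously for two edges $f_i, f_j$ that share an endpoint in $F^*$, so the dangers threatening distinct edges are statistically coupled. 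Proving that these couplings contribute only a $(1+o(1))$ multiplicative factor, rather than a constant inflation, requires a careful joint semi-random analysis in which Spencer's branching trees are refined to track the dangers to all edges of $F^*$ simultaneously; this is the principal technical hurdle.
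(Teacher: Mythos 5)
Your outline has the same skeleton as the paper's argument: reduce to the survival probability of one fixed copy of $F$, decompose over the birthtimes of its edges, and prove an asymptotic factorisation of the joint survival probability into per-edge factors (the paper does exactly this, discretely, by summing over ``placements'' of the edges of $F$ into rounds and proving $\probcnd{F \subseteq \TF_I}{\alpha} \sim \prod_f \varphi(i_f)$). But as written the proposal proves nothing beyond the first display, and it contains two genuine gaps. First, the base case $F = K_2$ is \emph{not} equivalent to Spencer's result and cannot be quoted: Spencer's manuscript only gives that the expected number of edges is at least $a_1 n^{3/2}$ for every constant $a_1$, with no $\sqrt{\ln n^{\epsilon}}$ factor and no sharp constant. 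Identifying $Q(\beta)$ and showing $\int_0^p Q(\beta)\,d\beta \sim \sqrt{\ln n^{\epsilon}/n}$ is itself one of the main contents of the paper: it requires introducing the function $\Phi$ solving $\Phi'(x) = \exp(-\Phi(x)^2)$ (so that $\Phi(x) \to \sqrt{\ln x}$), tracking the open-pair statistics $|\Lambda_1(g,i)|$ and $|\Lambda_2(g,i)|$ round by round with multiplicative error $\Gamma(i)$ that must be shown not to blow up over $I = \delta^{-2}$ rounds, and solving the resulting separable ODE for the survival probability of the branching tree. None of this is supplied or replaceable by a citation.

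Second, the factorisation --- which you correctly identify as the heart of the matter --- is asserted, not argued. Saying that the conditioning ``perturbs the bulk of each branching tree only along $O(1)$-many branches'' is a statement of the desired conclusion, not a proof; the error one must beat is $(1+o(1))$ \emph{per round, compounded over $\delta^{-2} = n^{2\epsilon}$ rounds}, so an $O(1)$-branch perturbation argument is nowhere near quantitative enough on its own. The paper's actual mechanism is different and more delicate: it first sparsifies the candidate edges (the set $\B_{i+1}^{*}$, chosen with probability $m/M$ per edge), then proves that after sparsification the branching forests attached to the distinct edges of $F$ are a.a.s.\ \emph{label-disjoint} (the event $\E^{*}_F$ of Lemma~\ref{lemma:E}), which is what makes the survival events genuinely independent given the sparsified structure. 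Establishing $\E^{*}_F$ requires the bad-sequence enumeration of Propositions~\ref{prop:lkj} and~\ref{eq:pe3}, and is exactly where the short-range correlations you worry about (a common neighbour $z$ threatening two edges $f_i, f_j$ sharing an endpoint) are killed. Until you supply both the single-edge computation and a quantitative disjointness/decoupling argument of this kind, the proposal is a plan rather than a proof.
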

Our second result gives a concentration result for $X_F$, for certain fixed triangle-free
graphs $F$.
\begin{theorem}
\label{thm:main1}
Fix a balanced triangle-free graph $F$ with $e_F/v_F < 2$. Then there exists $0
< \epsilon_F \le 10^{-10}$ such that for all $\epsilon \in (0, \epsilon_F)$ the
following holds.  For some $p \sim n^{\epsilon - 1/2}$, a.a.s., 
\begin{eqnarray*}
X_F \sim \frac{v_F!}{\aut(F)}\binom{n}{v_F}\bigg(\frac{\ln n^\epsilon}{n}\bigg)^{e_F/2}.
\end{eqnarray*}
\end{theorem}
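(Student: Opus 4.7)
The plan is to apply the second-moment method. Theorem \ref{thm:main} already supplies the first-moment asymptotic $\mu \deq \expec{X_F} \sim \frac{v_F!}{\aut(F)}\binom{n}{v_F}(\ln n^\epsilon/n)^{e_F/2}$, so by Chebyshev's inequality it suffices to show $\expec{X_F^2} = (1+o(1))\mu^2$.

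I would expand
\[
\expec{X_F^2} = \sum_{(F_1,F_2)} \Pr[F_1 \cup F_2 \subseteq \TF(n,p)],
\]
summing over ordered pairs of copies of $F$ in $K_n$, and group the summands by the isomorphism type of the overlap, encoded by the intersection $J \deq F_1 \cap F_2$, a subgraph of $F$ with $v_J$ vertices and $e_J$ edges. Since $\TF(n,p)$ is triangle-free, pairs for which $H \deq F_1 \cup F_2$ contains a triangle contribute zero. For every surviving overlap type, $H$ is a fixed triangle-free graph on $v_H = 2v_F - v_J$ vertices and $e_H = 2e_F - e_J$ edges, and Theorem \ref{thm:main} applied to $H$, together with symmetry between the $\Theta(n^{v_H})$ copies of $H$ in $K_n$, yields
\[
\Pr[F_1 \cup F_2 \subseteq \TF(n,p)] \sim \Bigl(\frac{\ln n^\epsilon}{n}\Bigr)^{e_H/2}.
\]
The number of pairs $(F_1,F_2)$ with a given overlap type is also $\Theta(n^{v_H})$, so this overlap type contributes an amount of order $n^{2v_F - v_J - e_F + e_J/2}(\ln n^\epsilon)^{e_F - e_J/2}$ to $\expec{X_F^2}$. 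The disjoint case $v_J = 0$ produces $(1+o(1))\mu^2$, which is the main term. For every overlap with $v_J \ge 1$, the ratio of its contribution to $\mu^2$ is
\[
\Theta\bigl(n^{-(v_J - e_J/2)}(\ln n^\epsilon)^{-e_J/2}\bigr).
\]
If $e_J = 0$ this equals $n^{-v_J} = o(1)$. If $e_J \ge 1$, balancedness of $F$ gives $e_J/v_J \le e_F/v_F < 2$, so $v_J - e_J/2 > 0$ and the ratio is again $o(1)$. Since only finitely many overlap types arise (their number depends on $F$ alone), summing yields $\expec{X_F^2} = (1+o(1))\mu^2$, and Chebyshev's inequality gives $X_F \sim \mu$ a.a.s.

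The main obstacle I anticipate is that Theorem \ref{thm:main} only guarantees a single $p \sim n^{\epsilon-1/2}$ for each fixed triangle-free graph, whereas the variance calculation needs a common $p$ at which the first-moment asymptotic holds simultaneously for $F$ and for every triangle-free union $H = F_1 \cup F_2$ arising in the second-moment sum. Since this family of candidate graphs is finite and determined entirely by $F$, inspecting the proof of Theorem \ref{thm:main}---which combines Spencer's branching process approach with the semi-random method---should produce a common window of valid $p$'s once $\epsilon < \epsilon_F$ is taken small enough; this is precisely the role played by the graph-dependent threshold $\epsilon_F$ in the statement.
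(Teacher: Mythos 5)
Your proposal is correct and follows the same architecture as the paper's proof: reduce to a second-moment computation, apply Chebyshev, and split the covariance sum into vertex-disjoint pairs (which give the main term $(1+o(1))\mu^2$ via the first-moment theorem applied to the disjoint union) and overlapping pairs grouped by intersection type. The one substantive difference is in how the overlapping terms are bounded. You apply the first-moment asymptotic (Theorem~\ref{thm:main}, or rather its internal form, Theorem~\ref{thm:main:}) to the triangle-free unions $H=F_1\cup F_2$, which yields the sharp ratio $\Theta(n^{e_J/2-v_J}(\ln n^{\epsilon})^{-e_J/2})=o(1)$ using only balancedness and $e_F/v_F<2$. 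The paper instead uses the cruder bound $\expec{I_G I_{G'}}=O((n^{\epsilon-1/2})^{2e_F-e_H})$, obtained from the observation that $G\cup G'\subseteq \TF_I$ forces all edges of $G\cup G'$ to have small birthtimes; this loses polylogarithmic factors against powers of $n^{\epsilon}$, and that loss is exactly why the paper needs $\epsilon<\epsilon_F$ sufficiently small. Your closing speculation about the role of $\epsilon_F$ is therefore slightly off: the common-$p$ issue you worry about is automatic, since the paper's Theorem~\ref{thm:main:} is a statement about the single process $\TF_I$ and holds simultaneously for every fixed triangle-free graph; $\epsilon_F$ is needed only to absorb the slack in the paper's cruder covariance bound, and your sharper route would in fact dispense with it.
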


One interesting point worth making with respect to Theorem~\ref{thm:main1} is
this. Let $F$ be a balanced triangle-free graph with density $e_F / v_F < 2$.
Fix $\epsilon \in (0, \epsilon_F)$, where $\epsilon_F$ is as guaranteed to
exist by Theorem~\ref{thm:main1}.  Let $p \sim n^{\epsilon - 1/2}$ be as
guaranteed to exist by Theorem~\ref{thm:main1}.  Consider the random graph
$\GG(n,m)$, which is chosen uniformly at random from among those $n$-vertex
graphs with exactly $m := \floor{2^{-1}n^{3/2}\sqrt{\ln n^\epsilon}}$ edges.
Note that by Theorem~\ref{thm:main1}, $\TF(n,p)$ and $\GG(n,m)$ a.a.s. has
asymptotically the same number of edges.  This of course follows directly from
our choice of the parameter $m$. The point is that by standard techniques and
by Theorem~\ref{thm:main1}, we also have that a.a.s., the number of copies of
$F$ in $\GG(n,m)$ is asymptotically equal to the number of copies of $F$ in
$\TF(n,p)$. Furthermore, $\GG(n,m)$ is expected to contain many triangles, and
indeed it does contain many triangles a.a.s., whereas $\TF(n,p)$ contains no
triangles at all.  Therefore, one may argue, at least with respect to the
number of copies of fixed balanced triangle-free graphs with density strictly
less than $2$, that $\TF(n,p)$ ``looks like'' a uniformly random graph with $m$
edges--only that it has no triangles.  A similar point can be made with respect
to Theorem~\ref{thm:main}.

\subsection{Related results}
Erd\H{o}s, Suen and Winkler~\cite{ErdosSW95} were the first to consider the
triangle-free process. They proved that the number of edges in $\TF(n)$ is
a.a.s. bounded by $\Omega(n^{3/2})$ and $O(n^{3/2}\ln n)$.
Spencer~\cite{Spencer0a} showed that for every two reals $a_1, a_2 > 0$, there
exists $n_0$ such that the number of edges in $\TF(n)$ for $n \ge n_0$ is
expected to be at least $a_1 n^{3/2}$ and is a.a.s. at most $a_2 n^{3/2}\ln n$.
In the same paper, Spencer conjectured that the number of edges in $\TF(n)$ is
a.a.s. $\Theta(n^{3/2}\sqrt{\ln n})$. In a recent breakthrough, this conjecture
was proved valid by Bohman~\cite{Bohman}. We remark that
Theorem~\ref{thm:main1} generalizes Bohman's lower bound for the number of
edges in $\TF(n)$ and answers a question of Spencer~\cite{SpencerPrivate}.  We
discuss in some more details Bohman's result below.

Other results are known for the more general $H$-free process. In the $H$-free
process, instead of forbidding a triangle, one forbids the appearance of a copy
of~$H$. Let $\MM(H,n)$ be the graph produced by the $H$-free process.  There
are several results with regard to the number of edges in
$\MM(H,n)$~\cites{RucinskiW92, Boll00, OsthusT01, Bohman, Wol, BKeevash}. For a
graph $H \ne K_3$ that is strictly $2$-balanced, the best lower bounds (which
are probably optimal) on the number of edges in $\MM(H,n)$ are provided by
Bohman and Keevash~\cite{BKeevash}; the best upper bounds on the number of
edges in $\MM(H,n)$ are provided by Osthus and Taraz~\cite{OsthusT01} and are
within $\poly(\ln n)$ factors from the best lower bounds.

Lastly, in~\cites{ErdosSW95, Spencer0a, Bohman, BKeevash}, the authors consider
the independence number of $\MM(H,n)$ for some graphs $H$.  Most notable are
the results of Bohman~\cite{Bohman} and of Bohman and Keevash~\cite{BKeevash}.
Bohman studies the independence number of $\MM(H,n)$ for $H \in \{K_3, K_4\}$.
His results imply Kim's~\cite{Kim} celebrated lower bound on the off-diagonal
Ramsey number $r(3,t)$ and a new lower bound for $r(4,t)$.  Bohman and Keevash
extend Bohman's results for every $H$ that is strictly $2$-balanced. By that,
they obtain new lower bounds for the off-diagonal Ramsey numbers $r(s,t)$ for
every fixed $s \ge 5$.

\subsection{Comparison with Bohman's argument}
Bohman's analysis of the triangle-free process in~\cite{Bohman} shows that the
number of edges in $\TF(n)$ is a.a.s. $\Omega(n^{3/2}\sqrt{\ln n})$.
Theorem~\ref{thm:main1} generalizes this result in that it matches Bohman's
lower bound up to a constant and in addition provides an a.a.s. lower bound on
the number of copies of $F$ in $\TF(n)$, for every fixed $F$ that is a balanced
triangle-free graph with density less than $2$. Moreover, Bohman's result
implies a lower bound of $\Omega(n^{3/2}\sqrt{\ln n})$ on the expected number
of edges in $\TF(n)$.  Theorem~\ref{thm:main} generalizes this result in that
it matches Bohman's lower bound up to a constant and provides a lower bound on
the expected number of copies of $F$ in $\TF(n)$ for every fixed triangle-free
graph $F$. Below we shortly discuss and compare Bohman's argument and ours.

Bohman uses the differential equations method in order to analyse $\TF(n,p)$
for $p = n^{\epsilon - 1/2}$ and some fixed $\epsilon > 0$. The basic argument
can be described as follows.  First, a collection of random variables that
evolve throughout the random process is introduced and tracked throughout the
evolution of $\TF(n,p)$.  This collection includes, for example, the random
variable $|O_i|$, where $O_i$ denotes the set of edges that have not yet been
traversed by the process, and which can be added to the current graph without
forming a triangle, after exactly $i$ edges have been added to the evolving
graph.
Now, at certain times during the process (i.e., at those times in which new
edges are added to the evolving graph), the expected change in the values of
the random variables in the collection is expressed using the same set of
random variables. This allows one to express the random variables in the
collection using the solution to an autonomous system of ordinary differential
equations.  It is then shown that the random variables in the collection are
tightly concentrated around the trajectory given by the solution to this
system.  The particular solution to the system then implies that $|O_I|$ is
a.a.s. large for $I = \Omega(n^{3/2}\sqrt{\ln n})$.  This then implies the
a.a.s. lower bound on the number of edges in $\TF(n)$.

In comparison with the above, we analyse $\TF(n,p)$ for $p = n^{\epsilon -
1/2}$ and some fixed $\epsilon > 0$ using the original branching process
approach of Spencer~\cite{Spencer0a} together with the semi-random method.
These two are combined together using combinatorial arguments.  Apart from our
different approach for the analysis of the triangle-free process, our actual
argument is more direct, in the sense that we estimate directly the probability
that any fixed triangle-free graph $F$ is included in $\TF(n,p)$.  Doing so
allows us to infer the validity of our two main results using standard
techniques. 

We remark that in the course of our analysis, we track and show the
concentration of some random variables that in retrospect (and perhaps not
surprisingly) turned out to be essentially the same random variables as some of
those that were tracked by Bohman. We choose to keep this part of the proof
both for the sake of completeness and since it provides an alternative argument
for the concentration of these random variables.

Lastly, we note that exactly like Bohman's argument, our ideas can be
generalized so as to obtain results which are similar in spirit to our main
theorems for the more general $H$-free process for a large family of graphs
$H$.  Moreover, since our arguments allow us to reason about subgraphs other
than edges in the evolving graph, we can prove results of the following form:
``a.a.s. every set of $t$ vertices in $\MM(H,n)$ spans a copy of $F$'' for some
$t$ and some fixed graphs $F$.  In particular for $H = K_4$, we can use the
ideas presented in this paper in order to show that a.a.s. every set of $t =
O(n^{3/5}(\ln n)^{1/5})$ vertices in $\MM(K_4,n)$ spans a triangle.  This
implies an a.a.s. upper bound on the number of edges in $\MM(K_4,n)$ which
matches up to a constant Bohman's lower bound.  

\section{Preliminaries} \label{section:preliminaries}

\subsection{Notation}
As usual, for a natural number $a$, let $[a] := \{1, 2, \ldots, a\}$.  We write
$x = a(y \pm z)^b$ if it holds that $x \in [a(y-z)^b, a(y+z)^b]$. We also use
$a(y \pm z)^b$ to simply denote the interval $[a(y-z)^b, a(y+z)^b]$.  All
asymptotic notation in this paper is with respect to $n \to \infty$.  All
inequalities in this paper are valid only for $n \ge n_0$, for some
sufficiently large $n_0$ which we do not specify.  

\subsection{Azuma's inequality}
The following result is a version of Azuma's inequality~\cite{Hoeffding},
tailored for combinatorial applications (see e.g.~\cites{McD,uppertail}).  Let
$\alpha_1, \alpha_2, \ldots, \alpha_m$ be independent random variables with
$\alpha_i$ taking values in a set $A_i$. Let $\psi : A_1 \times A_2 \times
\ldots \times A_m \to \R$ satisfy the following Lipschitz condition: if two
vectors $\alpha, \alpha' \in A_1 \times A_2 \times \ldots \times A_m$ differ
only in the $i$th coordinate, then $|\psi(\alpha) - \psi(\alpha')| \le c_i$.
Then the random variable $X = \psi(\alpha_1, \alpha_2,\ldots, \alpha_m)$
satisfies for any $t \ge 0$,
\begin{eqnarray*} \prob{|X - \expec{X}| \ge t} \le
2\exp\bigg(-\frac{2t^2}{\sum_{i=1}^m c_i^2}\bigg).  \end{eqnarray*}
%

\section{Proof of Theorems~\ref{thm:main}~and~\ref{thm:main1}}
\label{section:alternative}
In this section we prove Theorems~\ref{thm:main}~and~\ref{thm:main1}, modulo
one technical result.  We begin by giving an alternative definition of the
triangle-free process.  Under this alternative definition, we state a result
(Theorem~\ref{thm:main:}) which trivially implies Theorem~\ref{thm:main}.  We
then use this result in order to prove Theorem~\ref{thm:main1}.  The rest of
the paper will then be devoted for proving the above mentioned result.

Fix once and for the rest of the paper $\epsilon \in (0, 10^{-10})$.  Define
$\delta := 1/\floor{n^\epsilon}$ and $I := \delta^{-2}$.  For every integer $i
\ge 0$ define a triangle-free graph $\TF_i$ as follows.  Initially, take
$\TF_0$ to be the empty graph over the vertex set of $K_n$ and set $\B_0 :=
\emptyset$.  Given $\TF_i$, define $\TF_{i+1}$ as follows.  Choose uniformly at
random a function $\beta_{i+1} : K_n \setminus \B_{\le i} \to [0, 1]$ where
$\B_{\le i} := \bigcup_{j \le i} \B_j$.  Let $\B_{i+1}$ be the set of edges $f$
for which the \emph{birthtime} $\beta_{i+1}(f)$ satisfies $\beta_{i+1}(f) <
\delta n^{-1/2}$.  Traverse the edges in $\B_{i+1}$ in order of their
birthtimes (starting with the edge whose birthtime is smallest), and add each
traversed edge to $\TF_i$, unless its addition creates a triangle.  Denote by
$\TF_{i+1}$ the graph thus produced.  Observe that $\TF_I$ has the same
distribution as $\TF(n, p)$ for some $p \sim n^{\epsilon - 1/2}$. 

Let $\Phi(x)$ be a function over the reals, whose derivative is denoted by
$\phi(x)$, and which is defined by $\phi(x) := \exp(-\Phi(x)^2)$ and $\Phi(0)
:= 0$.  This is a separable differential equation whose solution (taking into
account the initial value) is given implicitly by $\frac{\sqrt{\pi}}{2}
\erfi(\Phi(x)) = x$, where $\erfi(x)$ is the imaginary error function, given by
$\erfi(x) := \frac{2}{\sqrt{\pi}}\int_0^x \exp(t^2) dt$. We have that $\erfi(x)
\to \exp(x^2)/(\sqrt{\pi}x)$ as $x \to \infty$. Hence, it follows that $\Phi(x)
\to \sqrt{\ln x}$ as $x \to \infty$.

By the discussion above, linearity of expectation and the fact that the
number of copies of $F$ in $K_n$ is $\frac{v_F!}{\aut(F)}\binom{n}{v_F}$ the
following result trivially implies Theorem~\ref{thm:main}.
\begin{theorem} \label{thm:main:}
Let $F \subset K_n$ be a triangle-free graph of size $O(1)$. Then
\begin{eqnarray*}
\prob{F \subseteq \TF_I} \sim \bigg(\frac{\Phi(I\delta)}{\sqrt{n}}\bigg)^{e_F}.
\end{eqnarray*}
\end{theorem}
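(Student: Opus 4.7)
My plan is to iterate a one-round analysis across the $I = \delta^{-2}$ rounds, tracking $\prob{F \subseteq \TF_i}$ together with a richer family of extension counts inside $\TF_i$. For a single round, I would condition on $\TF_{i-1}$ and $\B_{\le i-1}$, so that the birthtime function $\beta_i$ is uniform on $K_n \setminus \B_{\le i-1}$. An edge $e = uv \notin \TF_{i-1}$ is added to $\TF_i$ iff $\beta_i(e) < \delta/\sqrt{n}$, $u$ and $v$ share no common neighbor in $\TF_{i-1}$ (so $e$ is \emph{open}), and no edge $f$ that completes a triangle with $e$ through $\TF_{i-1}$ has a smaller birthtime than $e$. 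Writing $N(e,i-1)$ for the number of such triangle-closing $f$'s, the per-round probability of adding $e$ equals $\int_0^{\delta/\sqrt{n}} (1-x)^{N(e,i-1)}\,dx$, which under the expected order $N(e,i-1) = \Theta(\Phi(i\delta)\sqrt{n})$ reduces to $(1+o(1))\,\delta/\sqrt{n}$.

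The next step is to choose the right random variables to track. In addition to the number $|O_i|$ of open edges at step $i$, one needs the counts of small rooted extensions inside $\TF_i$ (as in Spencer's branching process), together with the joint triangle-closing structure of subgraphs of $F$. Combining the one-round probability above with the loss of open edges through newly added triangle-closing edges yields a discrete analogue of the separable differential equation $\phi(x) = \exp(-\Phi(x)^2)$ used to define $\Phi$, and predicts $|O_i|/\binom{n}{2} \sim \phi(i\delta)$; each relevant extension count concentrates similarly around its deterministic prediction. Concentration of the tracked quantities is obtained round by round through Azuma's inequality from Section~\ref{section:preliminaries}: conditional on $\TF_{i-1}$ and $\B_{\le i-1}$, every tracked count is a function of the independent coordinates $\beta_i(f)$ with per-coordinate Lipschitz constants bounded by a constant depending only on the tracked structure. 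This gives failure probability $\exp(-n^{c})$ per round for some $c>0$, and a union bound over the $I = n^{2\epsilon}$ rounds and the finite family of tracked structures preserves concentration with probability $1-o(1)$.

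Given these tools, the joint probability of $F$ is obtained from the recursion
\begin{eqnarray*}
\prob{F \subseteq \TF_i} \sim \prob{F \subseteq \TF_{i-1}} + \sum_{e \in E(F)} \prob{(F - e) \subseteq \TF_{i-1}} \cdot \phi(i\delta) \cdot \frac{\delta}{\sqrt{n}},
\end{eqnarray*}
which follows by summing over which single new edge of $F$ enters at round $i$ (contributions from more than one simultaneous new edge are of lower order since $e_F = O(1)$ while $\delta/\sqrt{n} \to 0$). Induction on $e_F$ together with the Riemann-sum identity $\sum_{i=1}^I \phi(i\delta)\,\delta \to \int_0^{I\delta}\phi(x)\,dx = \Phi(I\delta)$ then yields the claimed $(\Phi(I\delta)/\sqrt{n})^{e_F}$. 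The main obstacle is the passage from the unconditional per-edge estimate to the conditional probability $\prob{e \text{ added at round } i \mid (F - e) \subseteq \TF_{i-1}}$: conditioning on $(F - e) \subseteq \TF_{i-1}$ alters both the expected number $N(e,i-1)$ of triangle-closing edges near $e$ and the extension counts used by the branching process, since edges of $F - e$ may share vertices with $e$. Handling this through a Spencer-style branching argument rooted at the full vertex set of $F$ simultaneously — so as to show that these overlap corrections are only $1 + o(1)$ multiplicative factors — is the combinatorial heart of the argument.
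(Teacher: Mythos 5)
Your sketch runs in the same general direction as the paper (round-by-round analysis, a branching/survival computation per round, the differential equation behind $\Phi$, Azuma plus a union bound over rounds), but the step you yourself flag as ``the main obstacle'' is in fact the content of the theorem, and your recursion is not set up in a way that can absorb it. The identity
$\prob{F \subseteq \TF_i} \sim \prob{F \subseteq \TF_{i-1}} + \sum_{e} \prob{(F-e) \subseteq \TF_{i-1}}\cdot\phi(i\delta)\cdot\delta n^{-1/2}$
silently multiplies three quantities --- the history of $F-e$, the probability that $e$ is still open, and the probability that $e$ is born and survives round $i$ --- as if they were independent; for $e_F\ge 2$ one even needs all edges of $F$ to have stayed open simultaneously along disjoint portions of the history, and the events ``$e$ open at round $i$'' for different $e\in F$ interact through shared vertices. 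The paper resolves this not by decomposing on the last edge to arrive, but by conditioning at the outset on a full \emph{placement} (a birth round $i_f$ for every $f\in F$) and then tracking, round by round, the single joint event $\Q(i)=\Q_1(i)\cap\Q_2(i)\cap\Q_3(i)$: born edges are in $\TF_i$, \emph{all} unborn edges of $F$ are simultaneously still open, and the process well-behaves. The per-round factor is then supplied by Lemma~\ref{lemma:toprove:2} applied to the whole of $F\setminus F_i$ at once (the multi-rooted survival event $\A_{F\setminus F_i,L}$), which is exactly the ``branching argument rooted at the full vertex set of $F$ simultaneously'' that you defer; the product over rounds telescopes to $\prod_f \varphi(i_f)$ and Claim~\ref{claim:lk} sums over placements. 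Without this (or an equivalent device) your induction on $e_F$ does not close.

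A second, more technical gap: your claim that each tracked count is a function of the $\beta_i(f)$ with ``per-coordinate Lipschitz constants bounded by a constant'' is not true. Changing the birthtime of a single edge can flip the survival status of every tree in which that edge appears as a label, and a priori an edge can appear in polynomially many such trees (e.g.\ an edge incident to an endpoint of $g$ can lie in $\Theta(\sqrt n)$ of the sets in $\Lambda_2(g,i)$). The paper has to work for this: it exposes the randomness in two stages ($\B_{i+1}^{\star}$ then the birthtimes), conditions on $g\notin\B_{i+1}$ to remove the one catastrophic coordinate, and runs the ``affects/directly-affects'' bookkeeping to show the Lipschitz constants are $n^{1/1000}$ except on a small exceptional set. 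With genuinely constant Lipschitz constants the concentration would be immediate, but that hypothesis fails, so your Azuma step as stated does not go through.
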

For a graph $F$, let $Y_F$ be the random variable that counts the number of
copies of $F$ in $\TF_I$. The following theorem clearly implies
Theorem~\ref{thm:main1}.
\begin{theorem}
Fix a balanced triangle-free graph $F$ with $e_F/v_F < 2$.  Then there exists
$0 < \epsilon_F \le 10^{-10}$ such that for all $\epsilon \in (0, \epsilon_F)$
the following holds. A.a.s.,
\begin{eqnarray*}
Y_F \sim \frac{v_F!}{\aut(F)} \binom{n}{v_F}\bigg(\frac{\Phi(I\delta)}{\sqrt{n}}\bigg)^{e_F}.
\end{eqnarray*}
\end{theorem}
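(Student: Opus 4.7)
The plan is to deduce concentration from Theorem~\ref{thm:main:} via the standard second moment method. By linearity of expectation applied to each of the $\frac{v_F!}{\aut(F)}\binom{n}{v_F}$ embedded copies of $F$ in $K_n$, Theorem~\ref{thm:main:} immediately gives
\begin{eqnarray*}
\expec{Y_F} \sim \frac{v_F!}{\aut(F)}\binom{n}{v_F}\bigl(\Phi(I\delta)/\sqrt{n}\bigr)^{e_F}.
\end{eqnarray*}
Since $e_F/v_F<2$ and $\Phi(I\delta)\sim\sqrt{\ln n^\epsilon}\to\infty$, this quantity tends to infinity, so it suffices to show $\var{Y_F}=o(\expec{Y_F}^2)$ and then apply Chebyshev's inequality.

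To control the second moment, write $Y_F=\sum_{F'}\one_{\{F'\subseteq \TF_I\}}$, where $F'$ ranges over the copies of $F$ in $K_n$, and expand
\begin{eqnarray*}
\expec{Y_F^2}=\sum_{F_1,F_2}\prob{F_1\cup F_2\subseteq \TF_I}.
\end{eqnarray*}
I would group pairs $(F_1,F_2)$ according to the isomorphism type of the common subgraph $H := F_1\cap F_2$, having $v_H$ vertices and $e_H$ edges; such $H$ is necessarily isomorphic to a subgraph of $F$. If $F_1\cup F_2$ contains a triangle, the corresponding term vanishes because $\TF_I$ is triangle-free; otherwise $F_1\cup F_2$ is a triangle-free graph of constant size, and Theorem~\ref{thm:main:} yields $\prob{F_1\cup F_2\subseteq \TF_I}\sim\bigl(\Phi(I\delta)/\sqrt{n}\bigr)^{2e_F-e_H}$.

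Since the number of ordered pairs with a prescribed intersection type is $\Theta(n^{2v_F-v_H})$, the contribution to $\expec{Y_F^2}$ from pairs of overlap type $H$ is of order $n^{2v_F-v_H}\bigl(\Phi(I\delta)/\sqrt{n}\bigr)^{2e_F-e_H}$, and the ratio of this quantity to $\expec{Y_F}^2=\Theta(n^{2v_F})\bigl(\Phi(I\delta)/\sqrt{n}\bigr)^{2e_F}$ equals
\begin{eqnarray*}
\Theta\bigl(n^{e_H/2-v_H}(\ln n^\epsilon)^{-e_H/2}\bigr).
\end{eqnarray*}
Because $F$ is balanced with $e_F/v_F<2$, every subgraph $H\subseteq F$ with $v_H\ge 1$ satisfies $e_H/v_H\le e_F/v_F<2$, whence $e_H/2-v_H$ is a strictly negative constant. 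Summing over the $O(1)$ many overlap types with $v_H\ge 1$ gives a total non-leading contribution of $o(\expec{Y_F}^2)$, while the disjoint-pair term ($v_H=0$) yields exactly $(1+o(1))\expec{Y_F}^2$. Chebyshev's inequality then delivers $Y_F\sim\expec{Y_F}$ a.a.s., which is the asserted estimate.

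The main delicate point I foresee is making the asymptotic $\sim$ from Theorem~\ref{thm:main:} sufficiently uniform so that when it is invoked across all $O(1)$ overlap types the cumulative error is genuinely $o(1)\cdot\expec{Y_F}^2$. This is where the restriction $\epsilon<\epsilon_F$ would be imposed: one shrinks $\epsilon_F$ so that the negative-power-of-$n$ gain $n^{e_H/2-v_H}$ coming from balancedness comfortably absorbs any error contributions, uniformly over the finitely many overlap types that arise from subgraphs of~$F$.
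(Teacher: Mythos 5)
Your argument is correct and follows the same overall strategy as the paper's: compute $\expec{Y_F}$ from Theorem~\ref{thm:main:}, apply Chebyshev, and split the second moment according to the intersection type $H$ of the two copies, with the vertex-disjoint pairs handled exactly as you do. The one place you genuinely diverge is the overlapping pairs. The paper does \emph{not} invoke Theorem~\ref{thm:main:} there; it uses the cruder bound $\expec{I_{F_1}I_{F_2}} \le \prob{F_1\cup F_2 \subseteq \B_{\le I}} = O\big((n^{\epsilon-1/2})^{2e_F-e_H}\big)$, whose ratio to $\expec{Y_F}^2$ is of order $n^{e_H/2-v_H+\epsilon(2e_F-e_H)}$; this is precisely where balancedness and $e_F/v_F<2$ must be combined with taking $\epsilon<\epsilon_F$ small, so that the fixed negative constant $e_H/2-v_H$ dominates the $\epsilon$-term. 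You instead apply Theorem~\ref{thm:main:} to the triangle-free unions $F_1\cup F_2$ (correctly discarding unions containing a triangle, whose terms vanish), which yields the sharper ratio $n^{e_H/2-v_H}(\ln n^\epsilon)^{-e_H/2}$ with no $\epsilon$-dependence in the exponent of $n$. A consequence you seem not to have noticed is that your estimate makes the restriction to $\epsilon<\epsilon_F$ superfluous --- your closing paragraph attributes the role of $\epsilon_F$ to error absorption, but with your bound every $\epsilon\in(0,10^{-10})$ already works, whereas in the paper's version $\epsilon_F$ is essential. Both routes are valid: the paper's avoids any case analysis on which unions are triangle-free at the price of needing $\epsilon$ small, while yours extracts a formally stronger conclusion from the same black box, provided one notes (as the paper does implicitly elsewhere) that the asymptotics of Theorem~\ref{thm:main:} are uniform over the $O(1)$ isomorphism types of unions that occur.
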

\begin{proof}
Fix a balanced triangle-free graph $F$ with $e_F / v_F < 2$.  Assume $\epsilon
\in (0, \epsilon_F)$ for some $0 < \epsilon_F \le 10^{-10}$ sufficiently small
so that it satisfies our arguments below.  The number of copies of $F$ in $K_n$
is $\frac{v_F!}{\aut(F)} \binom{n}{v_F}$.  Therefore, by
Theorem~\ref{thm:main:},
\begin{eqnarray*}
\expec{Y_F} \sim \frac{v_F!}{\aut(F)}\binom{n}{v_F} \bigg(\frac{\Phi(I\delta)}{\sqrt{n}}\bigg)^{e_F}.
\end{eqnarray*}
To complete the proof, it suffices to show that $Y_F$ is concentrated around
its mean. For that we use Chebyshev's inequality (see e.g.~\cite{AlonSpencer}).
Thus it remains to show that $\var(Y_F) = o(\expec{Y_F}^2)$.

For $G \subset K_n$, let $I_G$ be the indicator random variable for the event
$\event{G \subseteq \TF_I}$.  We have
\begin{eqnarray*}
\var(Y_F) = \sum_{G, G'} \Cov(I_G, I_{G'}) = \sum_{G, G'} \expec{I_G I_{G'}} - \expec{I_G}\expec{I_{G'}},
\end{eqnarray*}
where the sum ranges over all copies $G, G'$ of $F$ in $K_n$.  We partition the
sum above to two sums and show that each is bounded by $o(\expec{Y_F}^2)$.
First, let $\sum_{G, G'}$ be the sum over all copies $G, G'$ of $F$ in $K_n$
such that $G$ and $G'$ share no vertex.  If $G$ and $G'$ share no vertex then
$G \cup G'$ is triangle-free. Hence, since the number of two vertex-disjoint copies of
$F$ in $K_n$
is asymptotically equal to the number of copies of $F$ in $K_n$ squared,
it follows from Theorem~\ref{thm:main:} that
\begin{eqnarray*}
\sum_{G, G'} \expec{I_G I_{G'}} - \expec{I_G}\expec{I_{G'}} = o\Bigg(
\bigg(\frac{v_F!}{\aut(F)} \binom{n}{v_F}
\bigg(\frac{\Phi(i\delta)}{\sqrt{n}}\bigg)^{e_F}\bigg)^2\Bigg) =
o(\expec{Y_F}^2).
\end{eqnarray*}
Next, we will make use of the following observation: if $G, G'$ are two copies
of $F$ in $K_n$ with $G \cap G'$ being isomorphic to $H$, then $\expec{I_G
I_{G'}} = O((n^{\epsilon-1/2})^{2e_F - e_H})$.  This is true since the
event $\event{G, G' \subseteq \TF_I}$ implies $\event{G \cup G' \subseteq
\B_{\le I}}$ and indeed, $\prob{G \cup G' \subseteq \B_{\le I}} =
O((n^{\epsilon-1/2})^{2e_F - e_H})$.
Let $\sum_{H}$ be the sum over all $H \subseteq F$ with $v_H \ge 1$.  Let
$\sum_{G \cap G' \equiv H}$ be the sum over all copies $G, G'$ of $F$ in $K_n$
that share at least $1$ vertex such that $G \cap G'$ is isomorphic to $H$.
Then by the observation above,
\begin{eqnarray*}
\sum_{H} \sum_{G \cap G' \equiv H} \Cov(I_G, I_{G'}) \le 
O(n^{2v_F - v_H}) \cdot (n^{\epsilon-1/2})^{2e_F - e_H},
\end{eqnarray*}
which, since $F$ is a fixed balanced graph with $e_F/v_F < 2$, is at most
$o(\expec{Y_{F}}^2)$ if $\epsilon \in (0, \epsilon_F)$ and $\epsilon_F$ is
sufficiently small.  This implies the desired bound on $\var(Y_{F})$.
\end{proof}

It remains to prove Theorem~\ref{thm:main:}. In the following section we state
two technical lemmas that will be used to prove Theorem~\ref{thm:main:}.  The
actual proof of Theorem~\ref{thm:main:} is given in Section~\ref{section:6}.
The rest of the paper will then be devoted for the proof of these technical
lemmas.

\section{Technical lemmas} \label{section:tech}
%
Here we state (and partly prove) two technical lemmas that will be used to
prove Theorem~\ref{thm:main:}.

We begin with some definitions.  For every edge $g \in K_n$ and for every $0
\le i \le I$, $j \in \{0,1,2\}$, define $\Lambda_j(g,i)$ as follows.  Let
$\Lambda_0(g,i)$ be the family of all sets $\{g_1, g_2\} \subseteq \TF_i$ such
that $\{g, g_1, g_2\}$ is a triangle.  Let $\Lambda_1(g,i)$ be the family of
all singletons $\{g_1\} \subseteq K_n \setminus \B_{\le i}$ such that there
exists $g_2 \in \TF_i$ for which $\{g, g_1, g_2\}$ is a triangle and it holds
that $\TF_i \cup \{g_1\}$ is triangle-free.  Let $\Lambda_2(g,i)$ be the family
of all sets $\{g_1,g_2\} \subseteq K_n \setminus \B_{\le i}$ such that $\{g,
g_1, g_2\}$ is a triangle and for which it holds that $\TF_i \cup \{g_j\}$ is
triangle-free for both $j \in \{1,2\}$.

\begin{definition}
For every $0 \le i \le I$, let
\begin{eqnarray*}
\gamma(i) &:=& \max\{\delta \Phi(i\delta) \phi(i\delta), \, \delta^2\phi(i\delta)^2 \}, \\
\Gamma(i) &:=&
 \left\{
\begin{array}{ll}
  n^{-30\epsilon} & \text{if } i = 0, \\
  \Gamma(i-1) \cdot (1 + 10 \gamma(i-1)) & \text{if } i \ge 1.
\end{array} \right.
\end{eqnarray*}
\end{definition}

Our first technical lemma tracks the cardinalities of $\Lambda_j(g,i)$.
%
\begin{lemma}
\label{lemma:edge}
%
Let $0 \le i < I$.  Suppose that given $\TF_i$, we have 
\begin{eqnarray*}
\forall g \in K_n. \,\,\,\,\,\,\, |\Lambda_0(g,i)| &\le& in^{5\epsilon},  \\
\forall g \in K_n. \,\,\,\,\,\,\, |\Lambda_1(g,i)| &\le& i \sqrt{n}, \\
\forall g \notin \B_{\le i}. \,\,\,\,\,\,\, |\Lambda_1(g,i)| &=& 2\sqrt{n} \Phi(i\delta) \phi(i\delta) \cdot (1 \pm \Gamma(i)),  \\
\forall g \notin \B_{\le i}. \,\,\,\,\,\,\, |\Lambda_2(g,i)| &=& n \phi(i\delta)^2 \cdot (1 \pm \Gamma(i)). 
\end{eqnarray*}
Then with probability $1-n^{-\omega(1)}$, 
\begin{eqnarray*}
\forall g \in K_n. \,\,\,\,\,\,\, |\Lambda_0(g,i+1)| &\le& (i+1)n^{5\epsilon}, \\
\forall g \in K_n. \,\,\,\,\,\,\, |\Lambda_1(g,i+1)| &\le& (i+1) \sqrt{n}, \\
\forall g \notin \B_{\le i+1}. \,\,\,\,\,\,\, |\Lambda_1(g,i+1)| &=& 2 \sqrt{n} \Phi((i+1)\delta) \phi((i+1)\delta) \cdot (1 \pm \Gamma(i+1)), \\
\forall g \notin \B_{\le i+1}. \,\,\,\,\,\,\, |\Lambda_2(g,i+1)| &=& n \phi((i+1)\delta)^2 \cdot (1 \pm \Gamma(i+1)). 
\end{eqnarray*}
\end{lemma}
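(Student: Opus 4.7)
The plan is to fix $i$, condition on $\TF_i$ and the assumed step-$i$ bounds, and for each fixed $g$ and each $j \in \{0,1,2\}$ establish the corresponding step-$(i+1)$ claim with failure probability $n^{-\omega(1)}$; a union bound over $g \in K_n$ (at most $n^2$ edges) then finishes. For each $g$ and $j$, I would view $|\Lambda_j(g, i+1)|$ as a function of the independent birthtimes $\{\beta_{i+1}(f) : f \in K_n \setminus \B_{\le i}\}$, compute its conditional expectation, and apply Azuma's inequality to get concentration around it.

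For the expected value of $|\Lambda_2(g, i+1)|$ at a $g \notin \B_{\le i+1}$, I would classify the transitions: an element $\{g_1, g_2\} \in \Lambda_2(g, i)$ leaves $\Lambda_2(g, i+1)$ either because some $g_k$ enters $\B_{i+1}$ (each with probability $\delta n^{-1/2}$), or because some $g_k$ becomes \emph{contaminated} when a new edge $h$ with $h \in \Lambda_1(g_k, i)$ is added to $\TF_{i+1}$. Transitions into $\Lambda_1(g, i+1)$ occur when exactly one of $g_1, g_2$ is contaminated, and transitions into $\Lambda_0(g, i+1)$ occur when both members of an element of $\Lambda_2(g,i)$ are themselves added to $\TF_{i+1}$. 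The crucial input is the \emph{survival probability} $\pi(h)$ that an edge $h$ drawn into $\B_{i+1}$ is actually added to $\TF_{i+1}$; following Spencer, this is estimated via a branching process inside $\B_{i+1}$ whose offspring are potential triangle-completing edges, with $|\Lambda_2(h, i)|$ governing the per-edge conflict rate. Plugging the survival estimates into the transition accounting, and Taylor-expanding $\phi((i+1)\delta)$ and $\Phi((i+1)\delta)$ around $i\delta$ via $\phi' = -2\Phi\phi^2$, should yield the claimed asymptotic with relative error $O(\gamma(i))$, exactly what is absorbed by the factor $(1 + 10\gamma(i))$ of $\Gamma(i+1)$. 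The analogous argument for $|\Lambda_1(g, i+1)|$ at $g \notin \B_{\le i+1}$ uses the same $\pi$ and a symmetric loss/gain tally (gains arise from elements of $\Lambda_2(g, i)$ losing one partner). The two crude upper bounds on $|\Lambda_0|$ and $|\Lambda_1|$ would follow from much simpler first-moment counts, since per step the increment in $|\Lambda_0(g, \cdot)|$ is bounded by the number of new triangles through $g$, which is easily at most $n^{5\epsilon}$ with high probability.

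For concentration, I would bound the Lipschitz constant $c_f$ for flipping $\beta_{i+1}(f)$ between ``$< \delta n^{-1/2}$'' and ``$\ge \delta n^{-1/2}$'' by the cascade such a flip can induce in $\TF_{i+1}$; the assumed bounds $|\Lambda_1(\cdot, i)|, |\Lambda_2(\cdot, i)| \le n^{1+O(\epsilon)}$ together with a $\poly(\ln n)$ cascading factor limit $c_f$ sufficiently that, with $O(n^2)$ birthtimes, Azuma's inequality delivers deviation probability $n^{-\omega(1)}$ at the scale $\Gamma(i+1) \cdot \expec{|\Lambda_j(g, i+1)|}$. The main obstacle is the precise estimate of $\pi(h)$: matching its exponential structure to $\phi$ (rather than just bounding it up to constants) is the heart of the branching-process/semi-random analysis and is presumably the content of the technical lemmas deferred to later sections. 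A secondary obstacle is controlling the Lipschitz cascade with probability $n^{-\omega(1)}$, which will likely require auxiliary bad-event truncations supplied by the same technical lemmas.
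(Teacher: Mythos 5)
Your overall skeleton matches the paper's: the two crude bounds on $|\Lambda_0|$ and $|\Lambda_1|$ are indeed handled by a first-moment count plus Chernoff, and the two sharp estimates are obtained by computing an expectation via a branching-process survival probability (this is exactly what Lemma~\ref{lemma:toprove:2} supplies, via the trees $T^{\star}_{g,L}$ and the events $\A_{g,L}$ of Proposition~\ref{prop:1}) and then applying Azuma with a combinatorial bound on how far a single birthtime flip can cascade. So the route is the right one.

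There is, however, a genuine quantitative gap in your error accounting, and it sits at the heart of why the lemma is stated the way it is. You claim that matching the transition count to the target by Taylor-expanding $\phi((i+1)\delta)$ and $\Phi((i+1)\delta)$ around $i\delta$ leaves a relative error $O(\gamma(i))$ which is ``exactly what is absorbed by the factor $(1+10\gamma(i))$ of $\Gamma(i+1)$.'' But $\Gamma(i+1)=\Gamma(i)(1+10\gamma(i))$ absorbs a \emph{new} per-step relative error of size only $O(\Gamma(i)\gamma(i))$, not $O(\gamma(i))$: by Fact~\ref{fact:f1} we have $\Gamma(i)\le n^{-10\epsilon}$ while $\gamma(i)=\Omega(n^{-5\epsilon})$, so $\gamma(i)\gg\Gamma(i)\gamma(i)$ and even $\gamma(i)^2\gg\Gamma(i)\gamma(i)$. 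A fresh relative error of order $\gamma(i)$ (or even the second-order Taylor remainder, of order $\gamma(i)^2$) injected at step $i=0$ already exceeds $\Gamma(1)\approx n^{-30\epsilon}$ by a polynomial factor, so the induction collapses at the first step; and summed over all $I=\delta^{-2}$ steps a $\Theta(\gamma(i))$ error per step accumulates to $\exp(\Theta(\sum_i\gamma(i)))=n^{\Theta(\epsilon)}$, destroying the asymptotic. This is why the paper does not Taylor-expand: the branching-process ODE $p=\exp(-P^2\phi(i\delta)^2-2P\Phi(i\delta)\phi(i\delta))$ is solved in closed form (Lemma~\ref{lemma:final}(i)), producing \emph{exactly} the ratios $\phi((i+1)\delta)/\phi(i\delta)$ and $(\Phi((i+1)\delta)-\Phi(i\delta))/(\phi(i\delta)\delta)$ as main terms, with all approximation errors kept at the scale $\Gamma(i)\gamma(i)$; the main terms then telescope exactly across the $I$ steps. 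Separately, your Azuma step for $|\Lambda_1(g,i+1)|$ needs the conditioning on $\{g\notin\B_{i+1}\}$: without it the single coordinate $\beta_{i+1}(g)$ has Lipschitz constant $\Theta(|\Lambda_1(g,i)|)=\Theta(\sqrt{n}\,\Phi\phi)$, which alone exceeds the target deviation $\Gamma(i+1)\cdot 2\sqrt{n}\,\Phi\phi$ and renders the inequality useless; and one must restrict attention to the $n^{1/2+O(\epsilon)}$ birthtimes that can actually influence the count, since summing squared Lipschitz constants over all $\Theta(n^2)$ birthtimes is also fatal at this deviation scale.
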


The following fact will be used in several places in our proofs, either
explicitly or not, and its proof is given in Appendix~\ref{app:1}.
\begin{fact}
\label{fact:f1}
For all $0 \le i \le I$,
\begin{enumerate}
\item[(i)] $1 \ge \phi(i\delta) = \Omega(n^{-1.5\epsilon})$;
$\Phi(i\delta) \le \ln n$;
$i \ge 1 \implies \Phi(i\delta) = \Omega(n^{-\epsilon})$.
\item[(ii)] $\gamma(i) = o(1)$; $\gamma(i) = \Omega(n^{-5\epsilon})$;
$n^{-30\epsilon} \le \Gamma(i) \le n^{-10\epsilon}$.
\end{enumerate}
\end{fact}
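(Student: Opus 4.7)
The plan is to prove Fact~\ref{fact:f1} by pure calculus, since all four quantities are deterministic and the statement does not involve the random process at all. The main ingredients are the implicit representation $\frac{\sqrt{\pi}}{2}\erfi(\Phi(x)) = x$, the monotonicities of $\Phi$ (increasing) and $\phi = e^{-\Phi^2}$ (decreasing) coming from $\Phi' = \phi > 0$ and $\Phi(0) = 0$, and the asymptotic $\erfi(y) \sim e^{y^2}/(\sqrt{\pi}\, y)$ as $y \to \infty$. The key observation is that $i\delta$ ranges over $[0, I\delta] = [0, \delta^{-1}]$ with $\delta^{-1} = \lfloor n^\epsilon\rfloor$, so every bound reduces to evaluating $\Phi$ and $\phi$ at the single endpoint $x = \delta^{-1} = (1+o(1))\,n^\epsilon$ and propagating by monotonicity.

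For part (i), I would first substitute $x = I\delta$ into the implicit relation and invert the asymptotic for $\erfi$ to obtain
\[
\frac{e^{\Phi(I\delta)^2}}{2\,\Phi(I\delta)} = (1+o(1))\,n^\epsilon,
\]
which solves (via two iterations of logarithms) to $\Phi(I\delta)^2 = \epsilon\ln n + \tfrac{1}{2}\ln\ln n + O(1)$. Monotonicity of $\Phi$ then gives $\Phi(i\delta) \le \Phi(I\delta) \le \sqrt{\epsilon\ln n + O(\ln\ln n)} \le \ln n$, while monotonicity of $\phi$ gives $\phi(i\delta) \ge \phi(I\delta) = \Theta(n^{-\epsilon}/\sqrt{\ln n}) = \Omega(n^{-1.5\epsilon})$; the upper bound $\phi(i\delta) \le 1$ is immediate from $\Phi(i\delta) \ge 0$. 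For the lower bound $\Phi(i\delta) = \Omega(n^{-\epsilon})$ when $i \ge 1$, I would reduce to $i = 1$ and estimate $\Phi(\delta) = \int_0^\delta \phi(t)\,dt \ge \delta\,\phi(\delta) \ge \delta\, e^{-\delta^2} \ge \delta/2 = \Omega(n^{-\epsilon})$, using $\Phi(\delta) \le \delta$ trivially to control $e^{-\Phi(\delta)^2}$.

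Part (ii) for $\gamma(i)$ is then a direct consequence of (i): $\delta\Phi(i\delta)\phi(i\delta) \le n^{-\epsilon}\ln n = o(1)$ and $\delta^2\phi(i\delta)^2 \le \delta^2 = o(1)$ give $\gamma(i) = o(1)$, while $\delta^2\phi(i\delta)^2 \ge n^{-2\epsilon}\cdot\Omega(n^{-3\epsilon}) = \Omega(n^{-5\epsilon})$ gives the lower bound. For $\Gamma(i)$, the bound $\Gamma(i) \ge n^{-30\epsilon}$ is immediate from the monotonicity of the recursion; for the upper bound I would use $1+10\gamma \le e^{10\gamma}$ to write
\[
\Gamma(I) \le n^{-30\epsilon}\exp\Bigl(10\sum_{j=0}^{I-1}\gamma(j)\Bigr),
\]
and then split the sum. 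The first piece $\sum_j \delta\,\Phi(j\delta)\phi(j\delta)$ is a Riemann sum for $\int_0^{I\delta}\Phi(x)\phi(x)\,dx$, which equals $\tfrac{1}{2}\Phi(I\delta)^2$ exactly since $(\Phi^2)' = 2\Phi\phi$; the approximation error is $o(1)$ because $(\Phi\phi)' = \phi(1-2\Phi^2)$ shows that $\Phi\phi$ is unimodal with $O(1)$ maximum and thus bounded total variation. The second piece $\sum_j \delta^2\phi(j\delta)^2 \le \delta\cdot I\delta = 1$ trivially from $\phi\le 1$. Combining, $\sum\gamma(j) \le \tfrac{\epsilon\ln n}{2} + O(1)$, so $\Gamma(I) \le n^{-30\epsilon}\cdot n^{5\epsilon}\cdot O(1) = O(n^{-25\epsilon}) \le n^{-10\epsilon}$. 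No step presents a real obstacle, but the most care-intensive one is the Riemann-sum estimate for the first piece of $\sum\gamma(j)$: it is what yields the factor $n^{5\epsilon}$ that must fit comfortably inside the $n^{-30\epsilon}\to n^{-10\epsilon}$ budget, and sloppier bookkeeping there would destroy the stated exponent.
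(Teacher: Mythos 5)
Your proposal is correct. Parts (i) and the $\gamma$-bounds in (ii) follow essentially the same route as the paper: monotonicity of $\Phi$ and $\phi$ plus the asymptotics of $\erfi$ at the endpoint $I\delta=\delta^{-1}$, and the crude bound $\Phi(\delta)\ge\delta\phi(\delta)\ge\delta/2$ for the lower bound on $\Phi$. Where you genuinely diverge is the upper bound on $\Gamma(I)$. The paper splits the index range at $i=\delta^{-1}\ln\ln n$ and uses two crude pointwise bounds, $\gamma(i)\le\delta\ln\ln n$ on the first range and $\gamma(i)\le 0.6/i$ on the second (the latter from $\Phi(x)\phi(x)\sim 1/(2x)$), obtaining $\Gamma(I)\le n^{-30\epsilon+o(1)}\prod_{i\le I}(1+6/i)\le n^{-16\epsilon+o(1)}$. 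You instead bound $\sum_j\gamma(j)$ directly by recognizing $\sum_j\delta\,\Phi(j\delta)\phi(j\delta)$ as a Riemann sum for $\int_0^{I\delta}\Phi\phi=\tfrac12\Phi(I\delta)^2$, which yields the sharper $\Gamma(I)\le n^{-25\epsilon+o(1)}$; both land comfortably inside $n^{-10\epsilon}$. Your route is arguably cleaner and explains where the exponent really comes from (the antiderivative identity $(\tfrac12\Phi^2)'=\Phi\phi$), at the cost of needing the unimodality/bounded-variation argument to control the Riemann error; the paper's route avoids any integral comparison but loses a constant in the exponent. Two cosmetic slips in your write-up that do not affect the argument: $(\Phi\phi)'=\phi^2(1-2\Phi^2)$, not $\phi(1-2\Phi^2)$ (unimodality and the $O(1)$ maximum survive), and the final multiplicative error should be $n^{o(1)}$ (i.e.\ $(\ln n)^{O(1)}$, coming from the $\tfrac12\ln\ln n$ term in $\Phi(I\delta)^2$) rather than $O(1)$ --- still absorbed by the $25\epsilon$ vs.\ $10\epsilon$ slack.
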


\subsection{Proof of Lemma~\ref{lemma:edge}} \label{section:3}
%
Fix $0 \le i < I$ and assume that the precondition in Lemma~\ref{lemma:edge}
holds.  We prove that each of the consequences in Lemma~\ref{lemma:edge} hold
with probability $1-n^{-\omega(1)}$. Along the way we state a useful lemma
that, together with Lemma~\ref{lemma:edge}, will be used to prove
Theorem~\ref{thm:main:} in the next section.

For any $g \in K_n$, assuming $|\Lambda_0(g,i)| \le in^{5\epsilon}$, we
trivially have that $|\Lambda_0(g,i+1)| \le in^{5\epsilon} + \lambda_0(g)$,
where $\lambda_0(g)$ is the number of sets $\{g_1\} \in \Lambda_1(g,i)$ for
which it holds that $g_1 \in \B_{i+1}$, \emph{plus} the number of sets $\{g_1,
g_2\} \in \Lambda_2(g,i)$ for which it holds that $g_1, g_2 \in \B_{i+1}$.
Given the precondition in Lemma~\ref{lemma:edge}, the fact that
$|\Lambda_2(g,i)| \le n$, the definition of $\B_{i+1}$ and the fact that $i <
I$, it is clear that $\expec{\lambda_0(g)} = o(n^{5\epsilon})$. Hence, by
Chernoff's bound we get that with probability $1-n^{-\omega(1)}$, $\lambda_0(g)
\le n^{5\epsilon}$.  This implies that with probability $1-n^{-\omega(1)}$, for
all $g \in K_n$, $|\Lambda_0(g,i+1)| \le (i+1)n^{5\epsilon}$.  

Next, for any $g \in K_n$, assuming $|\Lambda_1(g,i)| \le i\sqrt{n}$, we
trivially have that $|\Lambda_1(g,i+1)| \le i\sqrt{n} + \lambda_1(g)$, where
$\lambda_1(g)$ is the number of sets $\{g_1, g_2\} \in \Lambda_2(g,i)$ for
which it holds that $g_1 \in \B_{i+1}$ and $g_2 \notin \B_{i+1}$.  By the fact
that $|\Lambda_2(g,i)| \le n$ and by the definition of $\B_{i+1}$, it is clear
that $\expec{\lambda_1(g)} = o(\sqrt{n})$. Hence, by Chernoff's bound we get
that with probability $1-n^{-\omega(1)}$, $\lambda_1(g) \le \sqrt{n}$.  This
implies that with probability $1-n^{-\omega(1)}$, for all $g \in K_n$,
$|\Lambda_1(g,i+1)| \le (i+1)\sqrt{n}$.  

\begin{remark} The only reason we are interested in maintaining the above upper
bound on the cardinality of $\Lambda_1(g,i)$ for all $g \in K_n$ and $i$, is
that we need this upper bound in order to maintain an upper bound on the
cardinality of $\Lambda_0(g,i)$ for all $g \in K_n$ and $i$ (as we did above).
We will not make any further use of the above upper bound on $\Lambda_1(g,i)$.
\end{remark}

Having dealt with the easy cases first, we now turn to deal with the two last,
more involved consequences in the lemma.

\subsubsection{Definitions and an observation}
\begin{definition}
[Redefinition of $\beta_{i+1}$]
Define $M := n^{20000\epsilon}$.  Let $\B_{i+1}^{\star}$ be a random set of edges,
formed by choosing every edge in $K_n \setminus \B_{\le i}$ with probability
$Mn^{-1/2}$.  For each $g \in \B_{i+1}^{\star}$, let $\beta_{i+1}(g)$ be distributed
uniformly at random in $[0,Mn^{-1/2}]$ and for each $g \in K_n \setminus
(\B_{\le i} \cup \B_{i+1}^{\star})$, let $\beta_{i+1}(g)$ be distributed uniformly at
random in $(Mn^{-1/2},1]$.
\end{definition}
Clearly, the above definition of $\beta_{i+1}$ is equivalent to the original
definition of $\beta_{i+1}$, given at Section~\ref{section:alternative}. Note
that the definition of $\B_{i+1}$ is not changed and that $\B_{i+1} \subseteq
\B_{i+1}^{\star}$. 

Let $\Lambda^{\star}_j(g, i)$ be the family of all $G \in \Lambda_j(g, i)$ such
that $G \subseteq \B_{i+1}^{\star}$. Let $\Lambda^{\star \star}_2(g, i)$ be the
family of all $G \in \Lambda_2(g, i)$ such that $|G \cap \B_{i+1}^{\star}| =
1$.  

\begin{definition} \label{def:Tgl}
Let $g \in K_n \setminus \B_{\le i}$, $l \in \N$.  We define inductively a
labeled rooted tree $T^{\star}_{g,l}$ of height $2l$.  The nodes at even
distance from the root will be labeled with edges from $K_n \setminus \B_{\le
i}$.  The nodes at odd distance from the root will be labeled with sets of $j
\in \{1,2\}$ edges from $K_n \setminus \B_{\le i}$.
\begin{itemize}
\item $T^{\star}_{g,1}$: 
\begin{itemize}
\item The root $v_0$ of $T^{\star}_{g,1}$ is labeled with the
edge $g$.
\item 
For every $G \in \Lambda^{\star}_1(g,i) \cup \Lambda^{\star}_2(g,i)$ do: set a
new node $u_1$, labeled $G$, as a child of $v_0$; furthermore, for each edge
$g_1 \in G$ set a new node $v_1$, labeled $g_1$, as a child of $u_1$.
\end{itemize}
\item 
$T^{\star}_{g,l}$, $l \ge 2$: We construct the tree $T^{\star}_{g,l}$ by adding
new nodes to $T^{\star}_{g,l-1}$ as follows.  Let
$(v_0,u_1,v_1,\ldots,u_{l-1},v_{l-1})$ be a directed path in
$T^{\star}_{g,l-1}$ from the root $v_0$ to a leaf $v_{l-1}$.  Let $g_j$ be the
label of $v_j$.
\begin{itemize}
\item
For every $G \in \Lambda^{\star}_2(g_{l-1},i)$ for which $g_{l-2} \notin G$ do:
set a new node $u_l$, labeled $G$, as a child of $v_{l-1}$; furthermore, for
each edge $g_l \in G$ set a new node $v_l$, labeled $g_l$, as a child of $u_l$.
\item 
For every $G \in \Lambda^{\star}_1(g_{l-1},i)$ for which $g_{l-2} \notin G$ and
$G \cup \{g_{l-1}, g_{l-2}\}$ isn't a triangle do: set a new node $u_l$,
labeled $G$, as a child of $v_{l-1}$; furthermore, for the edge $g_l \in G$ set
a new node $v_l$, labeled $g_l$, as a child of $u_l$.
\end{itemize}
\end{itemize}
Lastly, for $G \subset K_n \setminus \B_{\le i}$, define
$T^{\star}_{G,l} := \{T^{\star}_{g,l} : g \in G\}$. 
\end{definition}

Consider the tree $T^{\star}_{g,l}$.  Let $v$ be a node at even distance from the
root of $T^{\star}_{g,l}$. Let $f_0$ be the label of $v$.  We define the event that
$\emph{$v$ survives}$ as follows.  If $v$ is a leaf then $v$ survives by
definition.  Otherwise, $v$ survives if and only if for every child $u$,
labeled $G$, of $v$, the following holds: if $\beta_{i+1}(f) < \min\{
\beta_{i+1}(f_0), \delta n^{-1/2} \}$ for all $f \in G$ then $u$ has a child
that does not survive.  For $g \notin \B_{\le i}$, let $\A_{g,l}$ be the event
that the root of $T^{\star}_{g,l}$ survives. Let $\A_{G,l} := \bigcap_{g \in G}
\A_{g,l}$.  Given Definition~\ref{def:Tgl}, the following is an easy
observation.
\begin{proposition}
\label{prop:1}
Let $l \ge 1$ be an odd integer.
\begin{itemize}
\item
Conditioned on $\event{g \in \B_{i+1}, \TF_i \cup \{g\} \text{ is triangle-free}}$,
\begin{displaymath}
\A_{g,l} \implies \event{g \in \TF_{i+1}} \implies \A_{g,l+1}.
\end{displaymath}
\item 
Conditioned on $\event{g \notin \B_{\le i+1}, \TF_i \cup \{g\} \text{ is triangle-free}}$,
\begin{displaymath}
\A_{g,l} \implies \event{\TF_{i+1} \cup \{g\} \text{ is triangle-free}} \implies \A_{g,l+1}.
\end{displaymath}
\end{itemize}
\end{proposition}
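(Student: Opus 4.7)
The plan is to prove both chained implications of Proposition~\ref{prop:1} simultaneously, via an induction on $k$ whose parity determines the direction: for every $k \ge 1$, I will establish $\A_{g,k} \Rightarrow \mathcal{P}(g)$ when $k$ is odd, and $\mathcal{P}(g) \Rightarrow \A_{g,k}$ when $k$ is even, where $\mathcal{P}(g)$ abbreviates the property relevant to the conditioning (either ``$g \in \TF_{i+1}$'' or ``$\TF_{i+1} \cup \{g\}$ is triangle-free''). Applying the odd case at $k = l$ and the even case at $k = l+1$ then yields both halves of the proposition.

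The starting point is a recursive unfolding of $\mathcal{P}(g)$. Since $\TF_i \cup \{g\}$ is triangle-free, any triangle through $g$ in $\TF_{i+1}$ must use an edge of $\B_{i+1}$, and so $\mathcal{P}(g)$ holds if and only if for every $G \in \Lambda_1(g,i) \cup \Lambda_2(g,i)$ with every $f \in G$ satisfying $\beta_{i+1}(f) < \tau(g) \deq \min\{\beta_{i+1}(g), \delta n^{-1/2}\}$, some $g_1 \in G$ has $g_1 \notin \TF_{i+1}$. This is structurally identical to the survival rule at the root of $T^{\star}_{g,k}$, once one observes that the $\B_{i+1}^{\star}$-restriction in $\Lambda_j^{\star}$ is automatic under the small-birthtime filter (because $\beta_{i+1}(f) < \delta n^{-1/2}$ forces $f \in \B_{i+1} \subseteq \B_{i+1}^{\star}$). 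The base cases $k=1$ and $k=2$ follow by a direct one- and two-level computation from this unfolding.

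The inductive step rests on one structural remark: along any ``surviving chain'' $g = g_0, g_1, g_2, \ldots$ in the tree, the birthtimes are strictly decreasing, $\beta_{i+1}(g_0) > \beta_{i+1}(g_1) > \cdots$, because each $g_j$ appears in a conflict whose small-birthtime test is witnessed against $\tau(g_{j-1}) \le \beta_{i+1}(g_{j-1})$. This monotonicity makes both exclusions of Definition~\ref{def:Tgl} harmless: any $u$-child with $g_{l-2} \in G$ automatically fails the small-birthtime premise, since $\beta_{i+1}(g_{l-2}) > \beta_{i+1}(g_{l-1}) \ge \tau(g_{l-1})$; and the additional $\Lambda_1$-triangle exclusion is in fact vacuous, because if $\{g'\} \in \Lambda_1(g_{l-1},i)$ then the unique triangle extending $\{g_{l-1}, g'\}$ has its third edge in $\TF_i$, whereas $g_{l-2} \in K_n \setminus \B_{\le i}$, so $\{g', g_{l-1}, g_{l-2}\}$ cannot be a triangle. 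Consequently, survival of each $v_1$ in $T^{\star}_{g,k}$ agrees with $\A_{g_1, k-1}$ for its label $g_1$.

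The induction then closes in one line per parity: for odd $k$, $\A_{g,k}$ supplies a non-surviving $v_1$ inside every small-birthtime $G_1$, hence $\neg \A_{g_1, k-1}$, hence $\neg \mathcal{P}(g_1)$ by the inductive hypothesis, confirming the unfolded form of $\mathcal{P}(g)$; the even-$k$ direction is the contrapositive along the same chain. The principal obstacle throughout is precisely the exclusion bookkeeping above --- verifying that pruning $u$-children with $g_{l-2} \in G$ or with a triangle completion does not break the correspondence with $\mathcal{P}(g)$ in either direction --- and the monotonicity of birthtimes along surviving chains is the single observation that dispatches both concerns uniformly.
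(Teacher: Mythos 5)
Your proof is correct; the paper itself offers no proof of Proposition~\ref{prop:1} (it is dismissed as ``an easy observation'' following Definition~\ref{def:Tgl}), and your parity-alternating induction is precisely the argument the truncation-at-surviving-leaves construction is designed to support. The two points you single out --- that the $\B_{i+1}^{\star}$-restriction is subsumed by the small-birthtime filter, and that both exclusions in Definition~\ref{def:Tgl} are harmless because birthtimes strictly decrease along active chains (so a set containing $g_{l-2}$ can never pass the test at $g_{l-1}$, and the $\Lambda_1$-triangle exclusion is vacuous since the third edge of that triangle lies in $\TF_i$ while $g_{l-2}\notin\B_{\le i}$) --- are exactly the details that need checking, and you check them correctly.
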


\subsubsection{Proof of Lemma~\ref{lemma:edge}} \label{section:proof of main lemma}
%
Let $\E^{\star}$ be the event that the following properties hold:
\begin{itemize}
\item[P1] For every $g \notin \B_{\le i}$, 
\begin{eqnarray*}
|\Lambda^{\star}_1(g, i)| &=&
2M \Phi(i\delta) \phi(i\delta) \cdot (1 \pm (\Gamma(i) + o(\Gamma(i)\gamma(i))) ), \\
|\Lambda^{\star}_2(g, i)| &=&
M^2 \phi(i\delta)^2 \cdot (1 \pm (\Gamma(i) + o(\Gamma(i)\gamma(i)))), \\
|\Lambda^{\star \star}_2(g, i)| &=&
2M \sqrt{n} \phi(i\delta)^2 \cdot (1 \pm (\Gamma(i) + o(\Gamma(i)\gamma(i)))).
\end{eqnarray*}
\item[P2] For every three distinct vertices $w,x,y$, if $\{w, x\}, \{x, y\} \notin \B_{\le i}$:
\begin{itemize}
\item The number of vertices $z$ such that $\{w, z\}, \{y,
z\} \in \TF_i$ and $\{x, z\} \in \B_{i+1}^{\star}$ is at
most $(\ln n)^2$.
\item The number of vertices $z$ such that $\{w, z\} \in
\TF_i$ and $\{x, z\}, \{y, z\} \in \B_{i+1}^{\star}$ is at
most $(\ln n)^2$.
\item The number of vertices $z$ such that $\{w, z\} \in \TF_i$, $\{x, z\} \notin \B_{\le i}$ and
$\{y, z\} \in \B_{i+1}^{\star}$ is at most $M^2$.
\end{itemize}
\item[P3] For every two distinct vertices $x, y$, the number of vertices $z$ such that 
$\{x,z\}, \{y,z\} \in \B_{i+1}^{\star}$ is at most $2M^2$.
\item[P4] For every vertex $x$, the number of edges $\{x, y\} \in \B_{i+1}^{\star}$ is at most
$2M\sqrt{n}$.
\end{itemize}

Fix once and for the rest of the paper $L \in \{40, 41\}$.  
The following is our second technical lemma, which
is proved in Sections~\ref{section:4}--\ref{section:5}.
\begin{lemma}
\label{lemma:toprove:2}
\mbox{}
\begin{itemize}
\item $\prob{\E^{\star}} = 1 - n^{-\omega(1)}$.
\item Let $F \subset K_n \setminus \B_{\le i}$ be a triangle-free graph of size
$O(1)$ such that $\TF_i \cup F$ is triangle-free.  Assume $\B_{i+1}^\star$ was
chosen and condition on the event that $\E^{\star}$ holds.  Also condition on
the event that $a_1$ edges of $F$ are in $B_{i+1}$ and that $a_2$ edges of $F$
are not in $B_{i+1}$ (so that $|F| = a_1 + a_2$). Then
\begin{eqnarray*}
\prob{\A_{F,L}} = 
\bigg(\frac{ \Phi((i+1)\delta) - \Phi(i\delta)} {\phi(i\delta) \, \delta}\bigg)^{a_1}
\bigg(\frac{\phi((i+1)\delta)}{\phi(i\delta)}\bigg)^{a_2} 
\cdot  (1 \pm 4\Gamma(i) \gamma(i))^{a_1+a_2}.
\end{eqnarray*}
\end{itemize}
\end{lemma}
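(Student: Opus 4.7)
The plan is to handle the two parts of Lemma~\ref{lemma:toprove:2} separately. For Part 1, each of P1--P4 is a concentration statement about a functional of the independent indicators that determine $\B_{i+1}^\star$ (each edge of $K_n \setminus \B_{\le i}$ is included independently with probability $Mn^{-1/2}$). Using the preconditions of Lemma~\ref{lemma:edge}, the expectations claimed in P1 follow immediately: for example $\expec{|\Lambda_1^\star(g,i)|} = |\Lambda_1(g,i)| \cdot Mn^{-1/2} \sim 2M\Phi(i\delta)\phi(i\delta)$, and analogously for $|\Lambda_2^\star(g,i)|$ and $|\Lambda_2^{\star\star}(g,i)|$. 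Each is a low-degree polynomial in independent indicators with bounded differences inherited from the uniform bounds on $|\Lambda_j(g,i)|$, so Azuma's inequality (Section~\ref{section:preliminaries}) gives concentration inside the prescribed $1 \pm (\Gamma(i) + o(\Gamma(i)\gamma(i)))$ window with failure probability $n^{-\omega(1)}$. Properties P2--P4 each count a small configuration inside $\B_{i+1}^\star$ whose expected count is polylogarithmic or polynomial in $M$, so Chernoff bounds suffice, and a union bound over $n^{O(1)}$ events closes Part 1.

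For Part 2, the heart of the matter is a recursive computation of the survival probability. Fix $g_0 \notin \B_{\le i}$ and write $s_l(t_0)$ for the probability, conditioning on $\B_{i+1}^\star$, $\E^\star$, and $\beta_{i+1}(g_0) = t_0$, that the root of $T^\star_{g_0,l}$ survives; set $\tau_0 := \min(t_0, \delta n^{-1/2})$. Trivially $s_0 \equiv 1$. By definition the root fails to survive iff for some child $u$ labeled $G$ we have both (i)~$\beta_{i+1}(f) < \tau_0$ for every $f \in G$, and (ii)~every grandchild of $u$ survives. Granting approximate independence across the children of the root (the key step, discussed below), one obtains the recursion
\begin{equation*}
s_l(t_0) \approx \exp\!\Bigl(-\tfrac{|\Lambda_1^\star(g_0,i)|\sqrt n}{M}\,J_{l-1}(\tau_0) - \tfrac{|\Lambda_2^\star(g_0,i)|n}{M^2}\,J_{l-1}(\tau_0)^2\Bigr), \quad J_{l-1}(\tau_0) := \int_0^{\tau_0}\! s_{l-1}(t)\,dt.
\end{equation*}
Substituting the P1 estimates, the exponent becomes $-2\sqrt n\,\Phi(i\delta)\phi(i\delta)\,J_{l-1} - n\phi(i\delta)^2 J_{l-1}^2$, whose fixed point is $s^\star(t_0) := \phi(i\delta + \tau_0\sqrt n)/\phi(i\delta)$: using $\Phi' = \phi$ one finds $\sqrt n\,\phi(i\delta)\,J^\star(\tau_0) = \Phi(i\delta + \tau_0\sqrt n) - \Phi(i\delta)$, so the exponent equals $\Phi(i\delta)^2 - \Phi(i\delta + \tau_0\sqrt n)^2$, yielding $s^\star$. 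The recursion is a contraction in $l$, so $s_l \to s^\star$ at geometric rate and any $L \ge 40$ is ample for convergence.

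To finish, I invoke approximate independence across the $a_1 + a_2$ trees rooted at edges of $F$: distinct $T^\star_{g,L}$'s rarely share labels, and the rare overlaps are quantitatively controlled by P2--P4. Thus $\prob{\A_{F,L}}$ factors, up to the stated error, as a product over $g \in F$ of conditional survival probabilities. For $g \in B_{i+1}$, averaging $s^\star(t_0)$ against the uniform density of $t_0$ on $[0,\delta n^{-1/2})$ and substituting $u = i\delta + t\sqrt n$ gives
\begin{equation*}
\frac{1}{\delta n^{-1/2}}\int_0^{\delta n^{-1/2}}\! \frac{\phi(i\delta + t\sqrt n)}{\phi(i\delta)}\,dt \;=\; \frac{\Phi((i+1)\delta) - \Phi(i\delta)}{\delta\,\phi(i\delta)};
\end{equation*}
for $g \notin B_{i+1}$, $\tau_0 = \delta n^{-1/2}$, so the factor is $\phi((i+1)\delta)/\phi(i\delta)$. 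These are precisely the leading factors in the lemma.

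The main obstacle is the error bookkeeping. The P1 error of order $\Gamma(i) + o(\Gamma(i)\gamma(i))$, the approximation $1 - x \approx e^{-x}$ (valid since $\gamma(i) = o(1)$), and the near-independence errors across siblings---which require P2 (overlaps mediated by $\TF_i$-neighbors), P3 (pairs of common $\B_{i+1}^\star$-neighbors), and P4 ($\B_{i+1}^\star$-degree at any vertex)---each contribute multiplicative factors of $1 \pm O(\Gamma(i)\gamma(i))$. Iterating over the constant number $L$ of levels and multiplying over the $a_1 + a_2$ roots keeps the total error within $(1 \pm 4\Gamma(i)\gamma(i))^{a_1 + a_2}$, as required. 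Finally, the restriction $L \in \{40, 41\}$ is dictated by Proposition~\ref{prop:1}: $\A_{g,L}$ sandwiches the event of interest from above for even $L$ and from below for odd $L$, so verifying the formula at two consecutive values of $L$ transfers the asymptotics to $\prob{g \in \TF_{i+1}}$ (and to $\prob{\TF_{i+1} \cup \{g\} \text{ is triangle-free}}$).
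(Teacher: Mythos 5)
Your Part 1 and the branching--process computation in Part 2 are essentially sound: the recursion for the survival probability, the verification that $\phi(i\delta+\tau_0\sqrt n)/\phi(i\delta)$ is its fixed point, the averaging over the root's birthtime to produce the factor $\frac{\Phi((i+1)\delta)-\Phi(i\delta)}{\delta\phi(i\delta)}$, and the contraction argument for convergence in $L$ all match what the paper does (your direct fixed-point check is in fact a tidy alternative to the paper's explicit $\erfi$ solution of the differential equation, and the ``relevant leaves'' bound plays the role of your contraction estimate). The problem is that everything rests on the step you explicitly defer --- ``granting approximate independence across the children of the root'' and across the trees $T^{\star}_{g,L}$ --- and that step has a genuine gap.

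The lemma is stated \emph{conditional on a fixed realization of $\B_{i+1}^{\star}$ satisfying $\E^{\star}$}; the only remaining randomness is the birthtimes. Independence of the survival events of distinct subtrees therefore requires that the edges labelling those subtrees be distinct, and this is a deterministic property of the fixed $\B_{i+1}^{\star}$. Properties P1--P4 do \emph{not} imply it: following the paper's own counting (Proposition~\ref{eq:pe3} without the second sparsification), the number of length-$l$ ``collision witnesses'' that $\E^{\star}$ permits inside a single $\star$-tree is of order $(M^2\phi(i\delta)^2)^{l-1-c}(M\Phi(i\delta)\phi(i\delta))^{c}(\ln n)^{2}$, which is enormous for $l\ge 2$; so a worst-case $\B_{i+1}^{\star}$ consistent with $\E^{\star}$ can force many repeated labels, and your claim that ``the rare overlaps are quantitatively controlled by P2--P4'' fails. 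This is precisely why the paper introduces a second, much sparser random set $\B_{i+1}^{*}\subseteq\B_{i+1}^{\star}$ (each edge kept with probability $m/M$, $m=\floor{n^{100\epsilon}}\phi(i\delta)^{-1}\ll M$), carried by the \emph{birthtime} randomness: the event $\E^{*}_F$ that the sparsified trees $T^{*}_{F,L}$ have no repeated labels then holds with probability $1-O(M^{-1/10})=1-o(\Gamma(i)\gamma(i))$ (Lemma~\ref{lemma:E}, via the bad-sequence analysis), this failure probability can be absorbed into the multiplicative error since $\prob{\A_{F,L}}=\Theta(1)$, and conditional on $\E^{*}\cap\E^{*}_F$ the recursion is exact rather than approximate. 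Your proposal omits this construction entirely, and without it (or an equivalent device --- note that simply adding ``no collisions'' to $\E^{\star}$ would only hold with probability $1-n^{-1/2+o(1)}$, which is too weak for the $n^{-\omega(1)}$ claimed in the first bullet and needed downstream) the recursive computation of $\prob{\A_{F,L}}$ is not justified. A secondary, lesser issue: the error bookkeeping ``each source contributes $1\pm O(\Gamma(i)\gamma(i))$, iterate over $L$ levels'' does not by itself land inside $(1\pm4\Gamma(i)\gamma(i))^{a_1+a_2}$; the paper's induction (Proposition~\ref{proposition:clm000}) shows the per-level error is a fixed point at $1\pm3\Gamma(i)\gamma(i)$ rather than accumulating, and that structure needs to be exhibited, not asserted.
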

\begin{corollary}
\label{cor:1}
Suppose the settings and assumptions in the second item in
Lemma~\ref{lemma:toprove:2} hold.  Further assume that $g \notin \B_{\le i}
\cup F$ and condition on $\event{g \notin B_{i+1}}$.  Then
\begin{eqnarray*}
\prob{\A_{F,L}} = 
\bigg(\frac{ \Phi((i+1)\delta) - \Phi(i\delta)} {\phi(i\delta) \, \delta}\bigg)^{a_1}
\bigg(\frac{\phi((i+1)\delta)}{\phi(i\delta)}\bigg)^{a_2} 
\cdot  (1 \pm 4.01\Gamma(i) \gamma(i))^{a_1+a_2}.
\end{eqnarray*}
\end{corollary}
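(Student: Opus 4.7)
I would deduce Corollary~\ref{cor:1} from the second item of Lemma~\ref{lemma:toprove:2} by a routine Bayesian manipulation. The point is that, under the conditioning of Lemma~\ref{lemma:toprove:2}, the extra event $\event{g \notin B_{i+1}}$ has probability $1 - O(\delta/M) = 1 - O(n^{-20000\epsilon})$, whereas the existing slack in Lemma~\ref{lemma:toprove:2} is $(1 \pm 4\Gamma(i)\gamma(i))^{a_1+a_2}$ with $\Gamma(i)\gamma(i) \ge n^{-35\epsilon}$ by Fact~\ref{fact:f1}(ii). So the perturbation is negligible and is absorbed by loosening the constant from $4$ to $4.01$.

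\textbf{Main computation.} Let $P := \prob{\A_{F,L}}$ under the conditioning of Lemma~\ref{lemma:toprove:2}. Bayes gives
$$
\prob{\A_{F,L} \mid g \notin B_{i+1}} \;=\; P \cdot \frac{\prob{g \notin B_{i+1} \mid \A_{F,L}}}{\prob{g \notin B_{i+1}}}.
$$
For the denominator, there are two cases determined by the already-fixed $\B_{i+1}^\star$: either $g \notin \B_{i+1}^\star$ (so $\beta_{i+1}(g) > Mn^{-1/2}$ deterministically and $\prob{g \notin B_{i+1}} = 1$), or $g \in \B_{i+1}^\star$ (so $\beta_{i+1}(g)$ is independent of the other conditioning and uniform on $[0, Mn^{-1/2}]$, giving $\prob{g \notin B_{i+1}} = 1 - \delta/M$); either way the denominator is $1 - O(\delta/M)$. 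For the numerator, the trivial bound $\prob{g \in B_{i+1} \mid \A_{F,L}} \le \prob{g \in B_{i+1}}/P$ together with $P = \Omega(1)$ yields $\prob{g \notin B_{i+1} \mid \A_{F,L}} = 1 - O(\delta/M)$. The $\Omega(1)$ lower bound on $P$ follows from Lemma~\ref{lemma:toprove:2}: the factor $(\Phi((i+1)\delta) - \Phi(i\delta))/(\phi(i\delta)\delta)$ equals $\phi(\xi)/\phi(i\delta)$ for some $\xi \in (i\delta, (i+1)\delta)$ by the mean value theorem, which is $1 - O(\gamma(i))$ upon expanding $\phi(x) = \exp(-\Phi(x)^2)$; similarly $\phi((i+1)\delta)/\phi(i\delta) = 1 - O(\gamma(i))$; and $a_1 + a_2 = O(1)$. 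Combining,
$$
\prob{\A_{F,L} \mid g \notin B_{i+1}} \;=\; P \cdot (1 + O(n^{-20000\epsilon})).
$$

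\textbf{Absorption and conclusion.} Substituting the expression for $P$ from Lemma~\ref{lemma:toprove:2} gives the claimed formula up to an extra multiplicative factor $(1 + O(n^{-20000\epsilon}))$. Since $O(n^{-20000\epsilon}) = o(n^{-35\epsilon}) = o(\Gamma(i)\gamma(i))$ and $a_1 + a_2 = O(1)$, elementary arithmetic yields
$$
(1 \pm 4\Gamma(i)\gamma(i))^{a_1+a_2}\,(1 + O(n^{-20000\epsilon})) \;\subseteq\; (1 \pm 4.01\,\Gamma(i)\gamma(i))^{a_1+a_2},
$$
which finishes the proof. The only non-automatic ingredient is the $\Omega(1)$ lower bound on $P$, which is essential so that the rare event $\event{g \in B_{i+1}}$ does not swamp the probability of $\A_{F,L}$ in the Bayes step; everything else is bookkeeping against the loose parameters $\delta = n^{-\epsilon}$ and $M = n^{20000\epsilon}$.
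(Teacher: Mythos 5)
Your proposal is correct and follows essentially the same route as the paper: the paper likewise observes that $\prob{g \notin \B_{i+1}} \ge 1 - \delta M^{-1} = 1 - o(\Gamma(i)\gamma(i))$ and that $\prob{\A_{F,L}} = 1-o(1)$ given $\E^\star$, and absorbs the resulting perturbation into the constant $4 \to 4.01$. The only cosmetic difference is that you extract the $1-o(1)$ lower bound on $\prob{\A_{F,L}}$ from the formula in Lemma~\ref{lemma:toprove:2} via the mean value theorem, whereas the paper gets it directly from the observation that $\A_{F,L}$ is implied by every relevant $G \in \Lambda^\star_j(f,i)$ containing an edge outside $\B_{i+1}$; both are valid.
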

\begin{proof}
Suppose the settings and assumptions in the second item in
Lemma~\ref{lemma:toprove:2} hold and let $g \notin \B_{\le i} \cup F$.  Without
conditioning on $\event{g \notin \B_{i+1}}$, the corollary follows trivially
from Lemma~\ref{lemma:toprove:2}, only with the constant $4.01$ being replaced
by $4$.  Now note that we have $\prob{g \notin \B_{i+1}} \ge 1 - \delta M^{-1}
\ge 1 - o(\Gamma(i)\gamma(i))$, where the second inequality is by
Fact~\ref{fact:f1}.  This gives the corollary, since given $\E^{\star}$,
$\prob{\A_{F,L}} = 1-o(1)$.  (Indeed, $\A_{F,L}$ is implied by the event that
for all $f \in F$ and for all $G \in \Lambda^{\star}_j(f,i)$, $j \in \{1,2\}$,
there is an edge $g \in G$ which is not in $\B_{i+1}$. Given $\E^{\star}$ this
occurs with probability $1-o(1)$.)
\end{proof}

For the rest of the section we assume that we have already made the random
choices that determine the set $\B_{i+1}^{\star}$. We also assume that
$\E^{\star}$ holds and keep in mind the fact that this event holds with
probability $1-n^{-\omega(1)}$. We further fix for the rest of the section an
edge $g \notin \B_{\le i}$ and condition on the event $\event{g \notin
\B_{i+1}}$.  We estimate the cardinalities of $\Lambda_j(g,i+1)$ for $j \in
\{1,2\}$.

We define random variables that will be used to estimate the cardinality of
$\Lambda_j(g,i+1)$ for $j \in \{1,2\}$.  Let $\lambda_1(g, l)$ be the number of
sets $\{g_1\} \in \Lambda_1(g,i)$ for which it holds that $g_1 \notin \B_{i+1}$
and $\A_{g_1,l}$ occurs, \emph{plus} the number of sets $\{g_1, g_2\} \in
\Lambda_2^{\star}(g,i) \cup \Lambda_2^{\star \star}(g,i)$ for which it holds that $g_1 \in
\B_{i+1}$, $g_2 \notin \B_{i+1}$, and $\A_{g_1,l} \cap \A_{g_2,l}$ occurs.  Let
$\lambda_2(g, l)$ be the number of sets $\{g_1, g_2\} \in \Lambda_2(g,i)$ for
which it holds that $g_1, g_2 \notin \B_{i+1}$ and $\A_{g_1,l} \cap \A_{g_2,l}$
occurs. 

By definition of $\Lambda_j(g,i+1)$ and by Proposition~\ref{prop:1} we have for
odd $l \ge 1$,
\begin{eqnarray*}
\lambda_1(g,l)\,\,  \le&  |\Lambda_1(g,i+1)| & \le\,\,  \lambda_1(g,l+1), \\ 
\lambda_2(g,l)\,\,  \le&  |\Lambda_2(g,i+1)| & \le\,\,  \lambda_2(g,l+1). 
\end{eqnarray*}
Note that since $g \notin \B_{\le i}$, we have for all $F \in \Lambda_1(g,i)
\cup \Lambda_2(g,i)$ that $\TF_i \cup F$ is triangle-free.  Using this fact, we
can use Corollary~\ref{cor:1} together with the precondition in the lemma,
Fact~\ref{fact:f1} and the fact that $\prob{f \notin \B_{i+1}} \ge 1 - \delta
M^{-1}$ to verify that
\begin{eqnarray*}
\expec{\lambda_1(g,L)} &=&
2 \sqrt{n} \Phi((i+1)\delta) \phi((i+1)\delta) 
           \cdot (1 \pm (\Gamma(i) + 9\Gamma(i)\gamma(i))), \\
\expec{\lambda_2(g,L)} &=& 
n \phi((i+1)\delta)^2 
           \cdot (1 \pm (\Gamma(i) + 9\Gamma(i)\gamma(i))).
\end{eqnarray*}
We complete the proof by giving concentration results for $\lambda_j(g,L)$, $j
\in \{1,2\}$.  The required bound on the cardinality of $\Lambda_j(g,i+1)$ for
all $g \notin \B_{\le i+1}$ will then follow from these concentration results,
together with a union bound argument.

{\bf Concentration of $\lambda_1(g,L)$:}
Let $S_1$ be the set of edges which is the union of the sets in
$\Lambda_1(g,i)$, $\Lambda^{\star}_2(g,i)$ and $\Lambda_2^{\star \star}(g,i)$.
Let $S_2$ be the set of all nodes in the trees $T_{f,L}$, $f \in S_1$, where
$T_{f,L}$ is defined to be the tree that is obtained as follows: cut off from
$T_{f,L}^{\star}$ every subtree that is rooted at a node having a child that is
labeled $g$.  Let $S_3 \supset S_1$ be the set of edges that are labels of
nodes in~$S_2$.  By the precondition in Lemma~\ref{lemma:edge} and
$\E^{\star}$, we have that $|S_1| \le M^2n^{1/2}$ and that every tree $T_{f,L}$
has at most $O(M^{2L}) \le n^{1/1000}$ nodes. Therefore, $|S_3| \le |S_2| \le
M^2n^{1/2+1/1000}$.  Observe that since we condition on $\event{g \notin
\B_{i+1}}$, we have that for $f \in S_1$, $\A_{f,L}$ depends only on the
birthtimes of the edges that are labels in $T_{f,L}$.  Hence, since $S_1
\subseteq S_3$ we have that $\lambda_1(g,L)$ is determined by the birthtimes of
the edges in $S_3$.  We argue below that every edge in $S_3$ appears as a label
in at most $n^{1/1000}$ trees $T_{f,L}$, $f \in S_1$.  This implies that
changing the birthtime of a single edge in $S_3$ can change $\lambda_1(g,L)$ by
at most $n^{1/1000}$. It will then follow from Azuma's inequality, the bound
above on the number of edges in $S_3$, the bound on $\expec{\lambda_1(g,L)}$
and Fact~\ref{fact:f1} that, as needed, with probability $1-n^{-\omega(1)}$,
\begin{eqnarray*}
\lambda_1(g,L) = 2 \sqrt{n} \Phi((i+1)\delta) \phi((i+1)\delta) \cdot (1 \pm
\Gamma(i+1)). 
\end{eqnarray*}
We argue that every edge in $S_3$ appears as a label in at most $n^{1/1000}$
trees $T_{f,L}$, $f \in S_1$. For $g', g'' \in S_3$, say that $g'$
\emph{affects} (resp. \emph{directly-affects}) $g''$ if there is a tree
$T_{f,L}$, $f \in S_1$, with a path (resp. path of length $0$ or $2$) leading
from a node labeled $g''$ to a node labeled $g'$.  It is enough to show that
every edge in $S_3$ affects at most $n^{1/1000}$ edges in $S_1$.

Fix $g' \in S_3$. By Definition~\ref{def:Tgl}, if $g'' \in S_3 \setminus S_1$
then $g'' \in \B_{i+1}^{\star}$. Therefore, the number of edges $g'' \in S_3
\setminus S_1$ that $g'$ directly-affects is at most $|\Lambda_1^{\star}(g',i)|
+ |\Lambda_2^{\star}(g',i)| + 1 = O(M^2)$, where the upper bound is by
$\E^{\star}$.  If $g'$ shares no vertex with $g$ then it is clear that $g'$
directly-affects at most $5$ edges in $S_1$.  If $g'$ shares exactly $1$ vertex
with $g$ then one can verify that given the precondition in
Lemma~\ref{lemma:edge} and $\E^{\star}$ (specifically by~P1,~P2~and~P3), $g'$
directly-affects at most $O(M^2)$ edges in $S_1$.  This covers all possible
cases since $g' \ne g$ by definition of $S_3$.  We conclude that every edge in
$S_3$ directly-affects $O(M^2)$ edges in $S_3$. Since a path in $T_{f,L}$ has
length at most $2L$, and the edges that are labels along such a path are all in
$S_3$, we get that every edge in $S_3$ affects $O(M^{2L}) \le n^{1/1000}$ other
edges in $S_3$.  Since $S_1 \subseteq S_3$ we are done.

\begin{remark}
In the argument above, it was essential that we condition on $\event{g \notin
\B_{i+1}}$. Had we not done that, it would be the case that changing the
birthtime of $g$ would change $\lambda_1(g,L)$ potentially by at least
$|\Lambda_1(g,i)|$. This affect is too large, as it will render Azuma's
inequality useless in providing us with the concentration result we seek.
\end{remark}

{\bf Concentration of $\lambda_2(g,L)$:}
Let $S_1$ be the set of edges which is the union of the sets in
$\Lambda_2(g,i)$.  Let $S_2$ be the set of all nodes in the trees $T_{f,L}^{\star}$,
$f \in S_1$.  Let $S_3 \supset S_1$ be the set of edges that are labels of
nodes in $S_2$.  Trivially, $|S_1| \le 2n$. Also, by $\E^{\star}$ every tree
$T_{f,L}^{\star}$ has at most $O(M^{2L}) \le n^{1/1000}$ nodes. Therefore, $|S_3| \le
|S_2| \le 2n^{1+1/1000}$.  Observe that $\lambda_2(g,L)$ is determined by the
birthtimes of the edges in $S_3$.  We argue below that there is a set of at
most $M^2n^{1/2+1/1000}$ edges in $S_3$, each of which is a label in at most
$M^2n^{1/2+1/1000}$ trees $T_{f,L}^{\star}$, $f \in S_1$, and that every other edge
in $S_3$ is a label in at most $n^{1/1000}$ trees $T_{f,L}^{\star}$, $f \in S_1$.  It
will then follow from Azuma's inequality, the bound above on the number of
edges in $S_3$, the bound on $\expec{\lambda_2(g,L)}$ and Fact~\ref{fact:f1},
that as needed, with probability $1-n^{-\omega(1)}$,
\begin{eqnarray*}
\lambda_2(g,L) = n \phi((i+1)\delta)^2 \cdot (1 \pm \Gamma(i+1)). 
\end{eqnarray*}
Define \emph{affects} and \emph{directly-affects} exactly as above.  It is
enough to show that there is a set of at most $M^2n^{1/2+1/1000}$ edges in
$S_3$, each of which affects at most $M^2n^{1/2+1/1000}$ edges in $S_1$, and
that every other edge in $S_3$ affects at most $n^{1/1000}$ edges in $S_1$.

For a fixed edge $g' \in S_3$, we collect a few useful observations. First
assume that $g' \notin \B_{i+1}^{\star}$. Then by Definition~\ref{def:Tgl}, we
must have that $g' \in S_1$ and that $g'$ appears as a label only at the root
of $T_{g',L}^{\star}$. Therefore, if $g' \notin \B_{i+1}^{\star}$ then $g'$
affects (and directly-affects) only $g'$.  Next assume that $g' \in
B_{i+1}^{\star}$. By $\E^{\star}$ we have that $g'$ directly-affects at most
$|\Lambda_1^{\star}(g',i)| + |\Lambda_2^{\star}(g',i)| + 1 = O(M^2)$ edges $g''
\in S_3 \cap \B_{i+1}^{\star} \supset S_3 \setminus S_1$.  If $g'$ shares no
vertex with $g$ then $g'$ clearly directly-affects at most $5$ edges in $S_1$.
If $g'$ shares at least one vertex with $g$ then it follows from the
precondition in Lemma~\ref{lemma:edge} and $\E^{\star}$ (specifically by~P4)
that $g'$ directly-affects at most $M^2n^{1/2}$ edges in $S_1$.  
Lastly we note that for every $f \in S_1$ the following holds: every edge that
is a label in $T_{f,L}^{\star}$, except perhaps for $f$, is in $S_3 \cap
\B_{i+1}^{\star}$.

Say that $g'$ is a \emph{bad-edge} if $g'$ affects an edge $g'' \in S_3 \cap
\B_{i+1}^{\star}$ that shares at least one vertex with $g$.  From the
observations in the previous paragraph, it follows that if $g'$ is a bad-edge
then $g'$ affects at most $M^2n^{1/2} \cdot O(M^{2L}) \le M^2n^{1/2+1/1000}$
edges in $S_1$;  on the other hand, if $g'$ is not a bad-edge then $g'$ affects
at most $O(M^{2L}) \le n^{1/1000}$ edges in $S_1$. It thus remains to bound the
number of bad-edges.  By $\E^{\star}$ there are at most $M^2n^{1/2}$ edges $g''
\in \B_{i+1}^{\star}$ that share at least one vertex with $g$.  In addition, by
$\E^{\star}$, for every edge in $S_3$ there are at most $O(M^{2L}) \le
n^{1/1000}$ other edges that affect it.  Hence, there are at most
$M^2n^{1/2+1/1000}$ bad-edges.  With that we are done.

\section{Proof of Theorem~\ref{thm:main:}} \label{section:6}
%
Let $F \subset K_n$ be a triangle-free graph of size $O(1)$.  Say that the
triangle-free process \emph{well-behaves} if for every $0 \le i \le I$, the
precondition in Lemma~\ref{lemma:edge} holds.  Note that for $i=0$ the
precondition in Lemma~\ref{lemma:edge} holds trivially. Hence, by
Lemma~\ref{lemma:edge} and the union bound, the process well-behaves with
probability $1-n^{-\omega(1)}$.

For $0 \le i < I$, define
\begin{eqnarray*}
\varphi(i) := \frac{\Phi((i+1)\delta) - \Phi(i\delta)}{\delta}.
\end{eqnarray*}
In this section we will use $\alpha$ to denote a \emph{placement} $\event{f \in
\B_{i_f+1} : f \in F}$, where $0 \le i_f < I$ for all $f \in F$.  We will
show that for every fixed placement $\alpha$, 
\begin{eqnarray}
\label{eq:mt}
\probcnd{F \subseteq \TF_I, \text{process well-behaves}}{\alpha} \sim \prod_{f \in F} \varphi(i_f).
\end{eqnarray}
Note that for every placement $\alpha$, $\prob{\alpha} \sim
\big(\frac{\delta}{\sqrt{n}}\big)^{e_F}$. Taking $\sum_\alpha$ to be the sum over
all possible placements $\alpha$, it will then follow from~(\ref{eq:mt}) that
\begin{eqnarray*}
\prob{F \subseteq \TF_I, \text{process well-behaves}} &=& \sum_{\text{$\alpha$}} 
\prob{\alpha} \probcnd{F \subseteq \TF_I, \text{process well-behaves}}{\alpha} \\
&\sim& \bigg(\frac{\delta}{\sqrt{n}}\bigg)^{e_F} \sum_{\text{$\alpha$}}
\prod_{f \in F} \varphi(i_f) \\
&=& \bigg(\frac{\Phi(I\delta)}{\sqrt{n}}\bigg)^{e_F},
\end{eqnarray*}
where the validity of the last equality is by Claim~\ref{claim:lk} below.
Since the process well-behaves with probability $1-n^{-\omega(1)}$ and
$\Phi(I\delta) \to \infty$ as $n \to \infty$, it will then follow that, as needed, 
\begin{eqnarray*}
\prob{F \subseteq \TF_I} = n^{-\omega(1)} + 
\prob{F \subseteq \TF_I, \text{process well-behaves}} \sim
\bigg(\frac{\Phi(I\delta)}{\sqrt{n}}\bigg)^{e_F}.
\end{eqnarray*}
\begin{claim}
\label{claim:lk}
$\delta^{e_F} \sum_\alpha \prod_{f \in F} \varphi(i_f) = \Phi(I\delta)^{e_F}$.
\end{claim}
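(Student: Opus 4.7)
The plan is to prove Claim~\ref{claim:lk} by a straightforward factorization-and-telescoping argument. First I would note that a placement $\alpha$ is just a choice of an index $i_f \in \{0, 1, \ldots, I-1\}$ for each edge $f \in F$, independently across edges. Since the summand $\prod_{f \in F} \varphi(i_f)$ factors over the edges of $F$, the sum over placements factors as a product of one-variable sums:
\begin{eqnarray*}
\delta^{e_F} \sum_\alpha \prod_{f \in F} \varphi(i_f)
= \prod_{f \in F} \bigg( \delta \sum_{i=0}^{I-1} \varphi(i) \bigg).
\end{eqnarray*}

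Next I would substitute the definition $\varphi(i) = \frac{\Phi((i+1)\delta) - \Phi(i\delta)}{\delta}$ into the inner sum, at which point the factors of $\delta$ cancel and the sum telescopes:
\begin{eqnarray*}
\delta \sum_{i=0}^{I-1} \varphi(i)
= \sum_{i=0}^{I-1} \bigl( \Phi((i+1)\delta) - \Phi(i\delta) \bigr)
= \Phi(I\delta) - \Phi(0) = \Phi(I\delta),
\end{eqnarray*}
where the final equality uses the initial value $\Phi(0) = 0$ from the definition of $\Phi$ given earlier in the paper. Plugging this back into the factored product immediately yields $\Phi(I\delta)^{e_F}$, which is exactly the claim.

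There is no real obstacle here: the identity is purely formal, using only (i) that the placement is a product of independent choices per edge, (ii) the explicit definition of $\varphi$, and (iii) the boundary condition $\Phi(0) = 0$. The proof fits in a few lines and requires no estimates or asymptotics.
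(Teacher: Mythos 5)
Your proof is correct, and it is in substance the same argument as the paper's: the paper phrases the factorization of $\sum_\alpha \prod_{f\in F}\varphi(i_f)$ probabilistically, by introducing independent $0/1$ variables $Z_f$ and using independence and symmetry to reduce to the single-edge sum $I^{-1}\sum_{0\le i<I}\varphi(i)$, which it then telescopes exactly as you do. Your direct algebraic factorization over the product set of placements is, if anything, the cleaner way to present the identical computation.
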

\begin{proof}
Let $\{Z_f : f \in F\}$ be a set of mutually independent $0/1$ random
variables, defined as follows. For every $f \in F$, choose uniformly at random
an index $0 \le i_f < I$ and let $f \in B_{i_f + 1}$. Then, let $Z_f = 1$ with
probability $\varphi(i_f)$. (We note that $\varphi(i) \in [0,1]$ for all $0 \le
i < I$; see Remark~\ref{remark:r2}.) In this context, the probability of a
placement $\alpha$ is $I^{-e_F}$. By definition we have
\begin{eqnarray*}
\prob{\forall f\in F. \,\,  Z_f = 1} = 
\sum_{\alpha} \prob{\alpha}\probcnd{\forall f\in F. \,\, Z_f = 1}{\alpha} 
= I^{-e_F} \sum_\alpha \prod_{f \in F} \varphi(i_f).
\end{eqnarray*}
On the other hand, by independence and symmetry we have for every fixed $g \in F$,
\begin{eqnarray*}
\prob{\forall f\in F. \,\, Z_f = 1}^{1/e_F} = \prob{Z_g = 1} 
= I^{-1} \sum_{0 \le i_g < I} \varphi(i_g) 
= (I\delta)^{-1} \Phi(I\delta).
\end{eqnarray*}
\end{proof}

It remains to prove~(\ref{eq:mt}). Fix a placement $\alpha$.  For $0 \le i <
I$, define $F_i := F \cap \B_{\le i}$.  For every $0 \le i \le I$, define the
events:
\begin{itemize}
\item[$\Q_1(i)$:] The precondition in Lemma~\ref{lemma:edge} holds for $i$. 
\item[$\Q_2(i)$:] $\TF_i \cup (F \setminus F_i)$ is triangle-free.
\item[$\Q_3(i)$:] $F_i \subseteq \TF_i$.
\end{itemize}
Let $\Q(i) := \Q_1(i) \cap \Q_2(i) \cap \Q_3(i)$.  Note that the event
$\bigcap_{0 \le i \le I} \Q(i)$ is exactly the event $\eventcnd{F \subseteq
\TF_I, \text{process well-behaves}}{\alpha}$. Therefore, it remains to estimate
the probability of $\bigcap_{0 \le i \le I} \Q(i)$.  Note that $\Q(0)$ holds
trivially. The next proposition gives an estimate on the probability that
$\Q(i+1)$ holds given $\Q(i)$.  Iterating on that proposition for all $0 \le i
< I$ gives~(\ref{eq:mt}).
\begin{proposition}
Let $0 \le i < I$ and assume $\Q(i)$ holds.  Then $\Q(i+1)$ holds with
probability
\begin{eqnarray*} 
\bigg(\frac{\Phi((i+1)\delta) - \Phi(i\delta)}{\phi(i\delta) \, \delta}\bigg)^{|F_{i+1} \setminus F_i|} 
\bigg(\frac{\phi((i+1)\delta)}{\phi(i\delta)}\bigg)^{|F \setminus F_{i+1}|} 
\cdot (1 \pm O(n^{-10\epsilon}))
\end{eqnarray*}
\end{proposition}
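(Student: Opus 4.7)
The plan is to reduce the proposition to Lemma~\ref{lemma:edge} (for $\Q_1(i+1)$) and to Lemma~\ref{lemma:toprove:2} combined with Proposition~\ref{prop:1} (for $\Q_2(i+1) \cap \Q_3(i+1)$), together with a small-probability estimate to absorb one residual triangle case. Throughout we work under $\Q(i)$ and under the conditioning given by the placement $\alpha$. Write $F^\star := F_{i+1}\setminus F_i$ and $F^{\star\star} := F\setminus F_{i+1}$, and set $a_1 := |F^\star|$, $a_2 := |F^{\star\star}|$; by $\Q(i)$, $F^\star \cup F^{\star\star} = F\setminus F_i \subseteq K_n\setminus\B_{\le i}$ is triangle-free and $\TF_i \cup (F\setminus F_i)$ is triangle-free, so the hypotheses of Lemma~\ref{lemma:toprove:2} are met for $F\setminus F_i$.

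First, $\Q_1(i+1)$ holds with probability $1 - n^{-\omega(1)}$ by Lemma~\ref{lemma:edge}, which is a negligible error. Next I reduce $\Q_2(i+1)\cap\Q_3(i+1)$ to the ``pointwise'' good event
\begin{eqnarray*}
\mathcal{G} &:=& \{\forall f\in F^\star, \, f \in \TF_{i+1}\} \cap \{\forall f\in F^{\star\star}, \, \TF_{i+1}\cup\{f\} \text{ is triangle-free}\}
\end{eqnarray*}
plus a ``cross-pair'' event $\mathcal{C}$ asserting that for every pair $f_1,f_2\in F^{\star\star}$ sharing a vertex, the unique edge $h$ completing $\{f_1,f_2,h\}$ to a triangle satisfies $h\notin\TF_{i+1}$. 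Indeed, any triangle in $\TF_{i+1}\cup(F\setminus F_{i+1})$ must (since $\TF_{i+1}$ is triangle-free and $F$ is triangle-free) have either exactly two edges in $\TF_{i+1}$ (covered by $\mathcal{G}$ for $F^{\star\star}$), or exactly one edge $h\in \TF_{i+1}$ together with two edges of $F^{\star\star}$; in the latter case, $\Q_2(i)$ forces $h\in\TF_{i+1}\setminus\TF_i \subseteq \B_{i+1}$, so this is ruled out by $\mathcal{C}$. Thus $\Q_2(i+1)\cap\Q_3(i+1) = \mathcal{G} \cap \mathcal{C}$.

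For $\mathcal{G}$, I first condition on the choice of $\B_{i+1}^\star$ restricted to the event $\E^\star$ (which holds with probability $1 - n^{-\omega(1)}$ by Lemma~\ref{lemma:toprove:2}). By Proposition~\ref{prop:1} applied to each edge of $F\setminus F_i$ (using the placement to split into the two bullet cases of that proposition), together with $L = 41$ odd and $L = 40 = 39+1$, we obtain the sandwich
\begin{eqnarray*}
\A_{F\setminus F_i, 41} \,\,\,\subseteq\,\,\, \mathcal{G} \,\,\,\subseteq\,\,\, \A_{F\setminus F_i, 40}.
\end{eqnarray*}
Applying Lemma~\ref{lemma:toprove:2} to both the upper and lower bound yields the same formula up to the factor $(1\pm 4\Gamma(i)\gamma(i))^{a_1+a_2}$, and by Fact~\ref{fact:f1} this factor is $1\pm o(n^{-10\epsilon})$ since $a_1+a_2 = O(1)$ and $\Gamma(i)\gamma(i) = o(n^{-10\epsilon})$. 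Removing the conditioning on $\E^\star$ and on $\B_{i+1}^\star$ only introduces a further multiplicative $(1\pm n^{-\omega(1)})$.

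Finally, for $\mathcal{C}$: the number of relevant edges $h$ is $O(|F^{\star\star}|^2) = O(1)$, and for each such $h$ the probability that $h\in\B_{i+1}$ is at most $\delta n^{-1/2} = n^{-\epsilon - 1/2}$; thus $\prob{\neg \mathcal{C}\mid \alpha} = O(n^{-\epsilon-1/2})$, which is much smaller than $n^{-10\epsilon}$. Combining these three pieces gives $\prob{\Q(i+1)\mid \Q(i),\alpha}$ equal to the claimed formula times $(1\pm O(n^{-10\epsilon}))$. The main subtlety is the reduction of $\Q_2(i+1)$ to $\mathcal{G}\cap\mathcal{C}$ — one has to be careful to exhaust the triangle cases and to verify that cross-pair triangles contribute only lower-order error, so that all the ``real'' contribution is captured by Lemma~\ref{lemma:toprove:2} via the sandwich argument.
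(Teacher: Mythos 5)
Your proof is correct and follows essentially the same route as the paper: the main term comes from Lemma~\ref{lemma:toprove:2} via the Proposition~\ref{prop:1} sandwich with $L\in\{40,41\}$, the residual one-edge-in-$\TF_{i+1}$ triangles are absorbed by an event of probability $1-O(\delta n^{-1/2})$ (your $\mathcal{C}$ plays the role of the paper's $\E'$, which is defined for all edges completing a triangle with two edges of $F$ rather than just pairs in $F\setminus F_{i+1}$), and $\Q_1(i+1)$ is handled by Lemma~\ref{lemma:edge}. The only implicit step, also left implicit in the paper, is that the additive errors convert to multiplicative ones because the main term is $1-o(1)$.
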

\begin{proof}
Assume that we are given an instance of $\TF_i$ and that $\Q(i)$ holds.
Consider the process as it creates $\TF_{i+1}$.  For the rest of the proof,
our context is the one given in Section~\ref{section:3}. 

Since $F \subset K_n$ is
triangle-free of size $O(1)$, we have by definition that $F \setminus F_i
\subset K_n \setminus \B_{\le i}$ is triangle-free of size $O(1)$.  By $\Q(i)$
we have that $\TF_i \cup (F \setminus F_i)$ is triangle-free.  Therefore, taking $a_1 =
|F_{i+1} \setminus F_i|$ and $a_2 = |F \setminus F_{i+1}|$, it follows from
Lemma~\ref{lemma:toprove:2} and Fact~\ref{fact:f1} that
\begin{eqnarray*}
\prob{\A_{F \setminus F_i,L}} = 
\bigg(\frac{ \Phi((i+1)\delta) - \Phi(i\delta)} {\phi(i\delta) \, \delta}\bigg)^{a_1}
\bigg(\frac{\phi((i+1)\delta)}{\phi(i\delta)}\bigg)^{a_2} 
\cdot  (1 \pm O(n^{-10\epsilon})).
\end{eqnarray*}
Let $F'$ be the set of all edges $f$ such that $F \cup \{f\}$ contains a
triangle and note that $|F'| = O(1)$.  Let $\E'$ be the event
that for every $f \in F'$, $\event{f \notin \TF_{i+1}}$. We have that
$\E'$ is implied by the event that for every $f \in F'$, $\event{f
\notin \B_{i+1}}$ occurs. Therefore, $\prob{\E'} \ge 1 - O(\delta n^{-1/2})
\ge 1-n^{-10\epsilon}$.  By Lemma~\ref{lemma:edge} we have $\prob{\Q_1(i+1)} =
1-n^{-\omega(1)}$.  Thus, since $\prob{\A_{F \setminus F_i,L}} = 1-o(1)$ (see the
proof of Corollary~\ref{cor:1}), it follows that
\begin{eqnarray*}
\prob{\A_{F\setminus F_i,L}, \E', \Q_1(i+1)} = 
\bigg(\frac{ \Phi((i+1)\delta) - \Phi(i\delta)} {\phi(i\delta) \, \delta}\bigg)^{a_1}
\bigg(\frac{\phi((i+1)\delta)}{\phi(i\delta)}\bigg)^{a_2} 
\cdot  (1 \pm O(n^{-10\epsilon})).
\end{eqnarray*}
All that is remained to observe is that the probability of $\event{\A_{F
\setminus F_i,L}, \E', \Q_1(i+1)}$ above is an estimation of the
probability of $\Q(i+1)$.  Indeed, it follows from Proposition~\ref{prop:1}
that if $L$ is odd (resp.  even) then $\event{\A_{F\setminus F_i,L}, \E',
\Q_1(i+1)}$ implies (resp. is implied by) $\Q(i+1)$.
\end{proof}

\section{Proof of Lemma~\ref{lemma:toprove:2}} \label{section:4}
%
Here we prove Lemma~\ref{lemma:toprove:2} modulo one lemma whose proof is given
in the next section.  Our context in this section and for the rest of the paper
is the one given in Section~\ref{section:3}, where the lemma was stated. That
is, we fix $0 \le i < I$ and assume the precondition in Lemma~\ref{lemma:edge}
holds.  

The first item in Lemma~\ref{lemma:toprove:2} follows from Chernoff's bound,
using Fact~\ref{fact:f1} and the precondition in Lemma~\ref{lemma:edge}.  Thus
it remains to prove the second item in the lemma. For the rest of the paper we
assume that $F \subset K_n \setminus \B_{\le i}$ is a triangle-free graph of
size $O(1)$ and that the preconditions in the second item of
Lemma~\ref{lemma:toprove:2} hold. That is, we assume that $\TF_i \cup F$ is
triangle-free, $\B_{i+1}^\star$ was chosen and $\E^\star$ holds. We also
condition on the event that $a_1$ edges of $F$ are in $\B_{i+1}$ and that $a_2$
edges of $F$ are not in $\B_{i+1}$. We remark that while we do have the set
$\B_{i+1}^{\star}$ at hand, we have not yet chosen the random function
$\beta_{i+1}$.

The basic idea of the proof is as follows.  We need to analyse the event
$\A_{F,L}$, and the definition of this event calls for a recursive analysis.
However, the fact that there could possibly be edges that are labels in more
than one node in $T_{F,L}^{\star}$ makes such a recursive analysis difficult.
As we insist on analysing $\A_{F,L}$ recursively, the following observation
comes to the rescue. Define $m := \floor{n^{100\epsilon}}\phi(i\delta)^{-1}$.
Redefine the birthtime function $\beta_{i+1}$ as follows. Let $\B_{i+1}^{*}$ be
a random set of edges formed by choosing every edge in $\B_{i+1}^{\star}$ with
probability $mM^{-1}$; for each $g \in \B_{i+1}^{*}$, let $\beta_{i+1}(g)$ be
distributed uniformly at random in $[0, mn^{-1/2}]$; for each $g \in
\B_{i+1}^{\star} \setminus \B_{i+1}^{*}$, let $\beta_{i+1}(g)$ be distributed
uniformly at random in $(mn^{-1/2}, Mn^{-1/2}]$ and for all other edges $g
\notin \B_{\le i}$, let $\beta_{i+1}(g)$ be distributed uniformly at random in
$(Mn^{-1/2}, 1]$. Let $\Lambda_j^{*}(g,i)$ be the set of all $G \in
\Lambda_j^{\star}(g,i)$ such that $G \subseteq \B_{i+1}^{*}$ and note that
$\B_{i+1} \subseteq \B_{i+1}^{*} \subseteq \B_{i+1}^{\star}$.  Let
$T_{f,L}^{*}$ be defined exactly as $T_{f,L}^{\star}$ only that now we use in
the definition $\Lambda_j^{*}(g,i)$ instead of $\Lambda_j^{\star}(g,i)$. Define
$T_{F,L}^{*} := \{T_{f,L}^{*} : f \in F\}$.
It turns out that with a sufficiently high probability, every edge that is a
label in $T_{F,L}^{*}$ is a label of exactly one node in $T_{F,L}^*$. Moreover,
in order to analyse the event $\A_{F,L}$, one only needs to consider the
birthtimes of the edges that are labels in $T_{F,L}^{*}$.  This will allow us
to analyse $\A_{F,L}$ recursively.

Let $\E^{*}$ be the event that for every $g \notin \B_{\le i}$,
\begin{eqnarray*}
|\Lambda^{*}_1(g, i)| &=& 2m \Phi(i\delta) \phi(i\delta) \cdot (1 \pm
1.01\Gamma(i)), \\
|\Lambda^{*}_2(g, i)| &=& m^2 \phi(i\delta)^2 \cdot (1 \pm 1.01\Gamma(i)).
\end{eqnarray*}
Let $\E^{*}_F$ be the following event: if $g$ is a label of some node at even
distance from the root of a tree in $T_{F,L}^{*}$, then $g$ is the label of no
other node at even distance from the root of a tree in $T_{F,L}^{*}$. 

The following two lemmas correspond to the basic idea outlined above, 
and clearly imply Lemma~\ref{lemma:toprove:2}.
\begin{lemma}
\label{lemma:E}
$\prob{\E^{*}, \E^{*}_F} = 1-O(M^{-1/10}) \ge 1-o(\Gamma(i)\gamma(i))$.
\end{lemma}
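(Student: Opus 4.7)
The plan is to bound the failure probabilities of $\E^*$ and $\E^*_F$ separately and combine them by a union bound. The final inequality $M^{-1/10} = o(\Gamma(i)\gamma(i))$ follows at once from Fact~\ref{fact:f1}, since $M^{-1/10} = n^{-2000\epsilon}$ while $\Gamma(i)\gamma(i) \ge n^{-30\epsilon} \cdot n^{-5\epsilon} = n^{-35\epsilon}$.

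For $\E^*$ I would apply Chernoff's inequality to each $|\Lambda_j^*(g,i)|$, $j \in \{1,2\}$, for $g \notin \B_{\le i}$, and then take a union bound. Having fixed $\B_{i+1}^\star$ and conditioned on $\E^\star$, each edge of $\B_{i+1}^\star$ is independently placed in $\B_{i+1}^*$ with probability $m/M$. Thus $|\Lambda_1^*(g,i)|$ is a sum of $|\Lambda_1^\star(g,i)|$ independent Bernoulli$(m/M)$ variables with mean $\approx 2m\Phi(i\delta)\phi(i\delta)$ by P1. A key observation for $|\Lambda_2^*(g,i)|$ is that if $g = \{u,v\}$ then every pair $\{g_1,g_2\} \in \Lambda_2^\star(g,i)$ has the form $\{\{u,w\},\{v,w\}\}$ for a unique vertex $w$; hence distinct pairs in $\Lambda_2^\star(g,i)$ share no edges, so $|\Lambda_2^*(g,i)|$ is again a sum of independent indicators, each a product of two Bernoulli$(m/M)$ variables, with mean $\approx m^2\phi(i\delta)^2$. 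Chernoff then bounds the probability of a relative deviation exceeding $0.01\Gamma(i)$ by $\exp(-\Omega(\Gamma(i)^2 \min\{m\phi\Phi,\, m^2\phi^2\}))$, which by Fact~\ref{fact:f1} is $n^{-\omega(1)}$. A union bound over the $O(n^2)$ choices of $g$ gives $\prob{\E^*} = 1-n^{-\omega(1)}$.

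For $\E^*_F$ I would upper-bound the expected number of ordered pairs of distinct even-distance nodes in $T_{F,L}^*$ with equal labels, and then apply Markov's inequality. Under $\E^*$ each tree $T_{f,L}^*$ has at most $O((m^2\phi^2)^L) \le n^{C\epsilon}$ even-distance nodes (with $C$ depending only on $L$), so the number of candidate pairs is at most $n^{O(\epsilon)}$. A label collision forces two ``triangle chains'' of length at most $2L$ from roots in $F$ to end at the same edge, and every edge along both chains must lie in $\B_{i+1}^*$. For each candidate configuration, a genuine collision (two chains landing on a common label via \emph{distinct} routes) requires at least one additional $\B_{i+1}^*$-membership beyond what a single chain would demand; each such extra constraint contributes a factor of $m/M = n^{-19900\epsilon}$, which is far smaller than the loss $M^{-1/10} = n^{-2000\epsilon}$ we are targeting. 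Enumerating the configurations by using the precondition of Lemma~\ref{lemma:edge} together with P1--P4 to bound the number of extensions of a chain at each step, the total expected number of collisions works out to $O(M^{-1/10})$, whence Markov gives $\probcnd{\neg \E^*_F}{\E^*} = O(M^{-1/10})$.

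The main obstacle will be the casework and bookkeeping in the $\E^*_F$ step. One must classify collision configurations (within a single tree versus across two trees, and the various intersection patterns of the two ancestor paths of the colliding nodes), and for each class extract from P1--P4 of $\E^\star$ and from the precondition of Lemma~\ref{lemma:edge} the combinatorial counts needed to show the class contributes $O(M^{-1/10})$ to the expectation. The fact that $L \le 41$ is essential here: it keeps the chain lengths (and hence the exponents appearing in the polynomial $n^{O(\epsilon)}$ count of configurations) bounded, so that even a single extra factor of $m/M$ suffices to overwhelm the count. The concentration step for $\E^*$ is, by contrast, essentially routine once one notices the edge-disjointness of the pairs in $\Lambda_2^\star(g,i)$.
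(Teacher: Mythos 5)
Your proposal is correct and follows essentially the same route as the paper: $\E^*$ is handled by Chernoff plus a union bound (the edge-disjointness of the pairs in $\Lambda_2^\star(g,i)$ you point out is exactly what makes this routine), and $\E^*_F$ is handled by a first-moment bound over pairs of ``chains'' enumerated via $\E^\star$ and the precondition of Lemma~\ref{lemma:edge}, with the decisive observation that a collision forces one over-determined subgraph whose $m/M$ membership cost is not compensated by a large count. The paper packages these collision configurations as ``bad-sequences'' and proves the reduction separately, but the underlying counting and the source of the $M^{-1/9}$ per class are the same as in your plan.
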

\begin{lemma}
\label{lemma}
Assume $\B_{i+1}^*$ was chosen and condition on $\E^* \cap \E^*_F$. Then
\begin{eqnarray*}
\prob{\A_{F,L}} =
\bigg(\frac{ \Phi((i+1)\delta) - \Phi(i\delta)} {\phi(i\delta) \, \delta}\bigg)^{a_1}
\bigg(\frac{\phi((i+1)\delta)}{\phi(i\delta)}\bigg)^{a_2} 
\cdot  (1 \pm 3.99\Gamma(i) \gamma(i))^{a_1+a_2}.
\end{eqnarray*}
\end{lemma}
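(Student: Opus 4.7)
The strategy is to use $\E^*_F$ to decouple the single-root survival events $\A_{f,L}$ and then analyze each factor via a branching-process recursion on the subtree depth. Because $\E^*_F$ guarantees that no edge labels two distinct even-distance nodes of the forest $T_{F,L}^*$, the event $\A_{f,L}$ for each $f\in F$ is determined by a collection of edge birthtimes disjoint from the collections determining $\A_{f',L}$ for $f'\neq f$; the non-root labels lie in $\B_{i+1}^*$ and have mutually independent birthtimes uniform on $[0,mn^{-1/2}]$, while the root birthtimes are independent conditional on the $\B_{i+1}$-membership of $F$. Hence $\prob{\A_{F,L}}=\prod_{f\in F}\prob{\A_{f,L}}$, which reduces the problem to a single-root estimate.

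For a fixed $f\in F$, I introduce $h_l(t)$, the conditional probability that a subtree of depth $2l$ rooted at an even-distance node labeled $f_0$ survives given $\beta_{i+1}(f_0)=t$, and set $\tau(t):=\min\{t,\delta n^{-1/2}\}$. Combining the definition of survival with the independence granted by $\E^*_F$ yields
\[
h_l(t)=\prod_u\big(1-K_{l-1}(\tau(t))^{|G_u|}\big),\qquad K_l(\tau):=(mn^{-1/2})^{-1}\int_0^\tau h_l(b)\,db,
\]
where $u$ ranges over children of the root node, labeled by sets $G_u\in \Lambda_1^*\cup\Lambda_2^*$. By $\E^*$, the number of singleton children is $a=2m\Phi(i\delta)\phi(i\delta)(1\pm 1.01\Gamma(i))$ and of pair children is $b=m^2\phi(i\delta)^2(1\pm 1.01\Gamma(i))$, modulo $O(1)$ omissions from the triangle-avoidance rules in Definition~\ref{def:Tgl} (absorbed harmlessly since $1/(m\phi(i\delta))$ is negligible). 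Approximating $(1-x)^a(1-x^2)^b\approx \exp(-aK-bK^2)$ produces the effective recursion.

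I expect the fixed point to be $h^*(t)=\phi((i+s)\delta)/\phi(i\delta)$ with $s=t/(\delta n^{-1/2})$. Since $\Phi'=\phi$, a direct integration gives $K^*(\tau(t))=(\Phi((i+s)\delta)-\Phi(i\delta))/(m\phi(i\delta))$, and a short computation yields $aK^*+bK^{*2}=\Phi((i+s)\delta)^2-\Phi(i\delta)^2$, confirming $\exp(-aK^*-bK^{*2})=h^*(t)$. The structural identity $(\Phi^2)'=2\Phi\phi$ encoded in $\phi=\exp(-\Phi^2)$ is precisely what makes the recursion close on this simple form. I would then prove by induction on $l$ that $h_l(t)=h^*(t)(1\pm\eta_l)$ with $\eta_l$ converging geometrically to an equilibrium of order $\Gamma(i)\gamma(i)$: a perturbation of magnitude $\eta$ in $h_{l-1}$ propagates as $O((aK^*+2bK^{*2})\eta)=O(\gamma(i)\eta)$, using $\Phi((i+s)\delta)-\Phi(i\delta)=O(\delta\phi(i\delta))$ together with the definition of $\gamma(i)$, while the $\pm 1.01\Gamma(i)$ slack in $|\Lambda_j^*|$ contributes a per-iteration baseline of $O(\Gamma(i)\gamma(i))$. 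Starting from $h_0\equiv 1$, whose discrepancy from $h^*$ is $O(\gamma(i))$, and iterating $L=40$ times damps the transient well below the equilibrium because $\gamma(i)=o(1)$ by Fact~\ref{fact:f1}.

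Finally, I read off each factor. When $f\in\B_{i+1}$, $\beta_{i+1}(f)$ is uniform on $[0,\delta n^{-1/2}]$, so $\prob{\A_{f,L}}=(m/\delta)K_L(\delta n^{-1/2})=(\Phi((i+1)\delta)-\Phi(i\delta))/(\phi(i\delta)\delta)\cdot(1\pm 3.99\Gamma(i)\gamma(i))$; when $f\notin\B_{i+1}$, $\tau$ is frozen at $\delta n^{-1/2}$ and $\prob{\A_{f,L}}=h_L(\delta n^{-1/2})=\phi((i+1)\delta)/\phi(i\delta)\cdot(1\pm 3.99\Gamma(i)\gamma(i))$. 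Multiplying the $a_1$ in-factors and $a_2$ out-factors yields the lemma. The main difficulty will be the error bookkeeping, namely verifying that the $O(\gamma(i))$ contractive factor per iteration genuinely dominates both the $O(1)$ exclusion corrections and the remainders from the $(1-x)\approx e^{-x}$ approximation, and that exactly $L=40$ iterations tighten the accumulated error to the specific constant $3.99$ rather than a weaker $O(1)$.
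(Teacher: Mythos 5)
Your proposal is correct and follows essentially the same route as the paper: decoupling $\prob{\A_{F,L}}$ into a product over $f\in F$ via $\E^*_F$, the same branching-process recursion with child counts supplied by $\E^*$, the same exponential fixed point $\phi((i+1)\delta)/\phi(i\delta)$ coming from the identity $\phi=\exp(-\Phi^2)$, and the same induction on depth with an $O(\gamma(i))$ contraction plus an additive $O(\Gamma(i)\gamma(i))$ per level. The only cosmetic differences are that the paper solves the limiting separable ODE explicitly via $\erfi$ where you guess and verify the fixed point, and that it handles the depth-$2L$ truncation by a separate ``relevant leaves'' count (Lemma~\ref{lemma:final}(ii)) rather than folding it into the geometric damping of the transient from $h_0\equiv 1$.
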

The proof of Lemma~\ref{lemma:E} is given below. The proof of
Lemma~\ref{lemma} is given in the next section.

\subsection{Proof of Lemma~\ref{lemma:E}}
By Chernoff's bound we have $\prob{\E^{*}} = 1-n^{-\omega(1)}$.  Therefore, it is
enough to prove that $\prob{\E^{*}_F} \ge 1-M^{-1/10}$. We assume that $F$ is not
empty, otherwise the assertion is trivial. For brevity, set $\Lambda^{\star}(g,i)
:= \Lambda^{\star}_1(g,i) \cup \Lambda^{\star}_2(g,i)$ for all $g \in K_n$.  When
stating that two graphs share $a$ edges (or vertices), unless otherwise stated
this means that the two graphs share exactly $a$ edges (or vertices).

\begin{definition}
[bad-sequence]
\label{def:bad2}
Let $S = (G_1,G_2,\ldots,G_l)$ be a sequence of subgraphs of $K_n$ with $1 \le
l \le 2L$.  We say that $S$ is a \emph{bad-sequence} if the following
properties hold simultaneously.
\begin{itemize}
\item For every $j \in [l]$: $G_j \in \Lambda^{\star}(g,i)$ for some $g \in F \cup
\bigcup_{k<j} G_k$.
\item For every $j \in [l-1]$: $G_j$ shares $|G_j|$ vertices and $0$ edges with
$F \cup \bigcup_{k<j} G_k$. 
\item Either 
  \begin{itemize}
  \item $G_l$ shares $|G_l|+1$ vertices and at most $|G_l|-1$ edges with $F \cup
  \bigcup_{k<l} G_k$, or
  \item $G_l$ shares $|G_l|$ vertices and $0$ edges with $F \cup \bigcup_{k<l}
  G_k$.  In addition, there is an edge $\{x, y\} \in F \cup \bigcup_{k<l} G_k$ such that
  $G_l \in \Lambda^{\star}(\{x, y\}, i)$ and there is an edge in $G_l$, without loss of
  generality $\{x, z\}$, with the following property: there is an edge 
  $\{w, x\} \in F \cup \bigcup_{k<l} G_k$ with $w \ne y$ such that 
  $\{w, z\} \in \TF_i$.
  \end{itemize}
\end{itemize}
\end{definition}

Let $\E$ be the event that for every bad-sequence $S = (G_1, G_2, \ldots, G_l)$
there exists $j \in [l]$ such that $\event{G_j \nsubseteq \B_{i+1}^{*}}$. The next
two propositions imply the desired bound $\prob{\E^{*}_F} \ge 1 - M^{-1/10}$, as
they state that $\E$ implies $\E^{*}_F$ and $\prob{\E} \ge 1 - M^{-1/10}$.

\begin{proposition}
\label{prop:lkj}
$\E$ implies $\E^{*}_F$.
\end{proposition}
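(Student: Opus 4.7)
The plan is to prove the contrapositive: assuming $\E^{*}_F$ fails, I will exhibit a bad-sequence each of whose members is contained in $\B_{i+1}^{*}$ (hence in $\B_{i+1}^{\star}$), contradicting $\E$. Failure of $\E^{*}_F$ supplies an edge $g^*$ labelling two distinct even-distance nodes of $T_{F,L}^{*}$. Each such node terminates a root-to-node path in some tree $T_{f,L}^{*}$ with $f \in F$; reading off the odd-distance labels along such a path gives a sequence of sets, each of which lies in $\Lambda^{*}(\cdot, i) \subseteq \Lambda^{\star}(\cdot, i)$, is contained in $\B_{i+1}^{*}$, and has its ``parent'' edge (the preceding even-distance label) either in $F$ or in an earlier set on the same path.

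Next I would splice the two paths into a single sequence $(G_1, \ldots, G_m)$ with $m \le 2L$, identifying any shared initial prefix so that each set appears at most once and the parent-membership condition of Definition~\ref{def:bad2} remains satisfied throughout. Walking along this sequence left-to-right, I would let $l$ be the smallest index at which $(G_1, \ldots, G_l)$ qualifies as a bad-sequence; that is, $G_1, \ldots, G_{l-1}$ are all ``good'' (share exactly $|G_j|$ vertices and $0$ edges with $F \cup \bigcup_{k<j} G_k$) and $G_l$ satisfies Case~A or Case~B of Definition~\ref{def:bad2}. The intuition for the existence of such an $l$ is that the terminal set of the combined sequence contains $g^*$, which has already appeared inside some strictly earlier set, so the sequence cannot remain good throughout.

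The main obstacle is the case analysis at the first failure of goodness. Writing a candidate $G_l$ through its tree-parent $\{x, y\}$ and third vertex $z$ (so every edge of $G_l$ is incident to $z$), minimality of $l$ forces $z$ to lie in prior stuff, for otherwise no edge of $G_l$ could be in prior stuff and $G_l$ would itself be good. Consequently $G_l$ shares $|G_l| + 1$ vertices, and whenever at most $|G_l| - 1$ of its edges are in prior stuff, Case~A follows immediately. The delicate residual situation is when $G_l$ is entirely absorbed into prior stuff (which can only occur when the duplicated edge $g^*$ is itself absorbed as an edge of $G_l$); to handle this I would exploit the $\TF_i$-edges that accompany the earlier appearance of $g^*$ inside some $G_k \in \Lambda^{\star}_1(g_{k-1}, i) \cup \Lambda^{\star}_2(g_{k-1}, i)$, combined with the flexibility in Definition~\ref{def:bad2} to re-select the ``parent'' of $G_l$ from any prior edge $\{x', y'\}$ with $G_l \in \Lambda^{\star}(\{x', y'\}, i)$, and truncate the candidate sequence at some strictly earlier index at which the Case~B triangle-closure is already available. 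Making this reduction rigorous — lining up the triangle and triangle-free-process certificates and ruling out degenerate vertex coincidences — is the technical crux of the proposition.
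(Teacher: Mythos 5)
Your overall strategy is the paper's: witness a failure of $\E^{*}_F$ by a repeated edge label, read off the odd-level labels along the two root-to-node paths, splice them into one sequence of sets all contained in $\B_{i+1}^{*}$, and argue that some prefix (or reordering) is a bad-sequence, contradicting $\E$. So the route is right. But the proposal has a genuine gap exactly where you flag ``the technical crux'': you do not carry out the case analysis at the first failure of goodness, and that analysis is the entire content of the proposition. Concretely, two things are missing. First, when $|G_l|=2$ and both edges of $G_l$ lie in the prior union, you cannot land in Case~A of Definition~\ref{def:bad2}; the paper shows this subcase is \emph{vacuous}, because both edges being absorbed forces the grandparent edge $g_{l-2}$ to belong to $G_l$, which is excluded by the construction rules of $T^{\star}_{g,l}$ (the clause ``$g_{l-2}\notin G$''). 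Hence exactly one edge is shared and Case~A applies. Second, when $|G_l|=1$ and its single edge $\{w,x\}$ is absorbed, the bad-sequence is \emph{not} $(G_1,\dots,G_l)$ but the truncation $(G_1,\dots,G_{l-1})$, and verifying Case~B requires extracting the edge $\{w,z\}\in\TF_i$ from the fact that $G_l\in\Lambda^{\star}_1(\{x,z\},i)$, together with checking $w\ne y$ (which uses the tree's non-triangle clause). Your sketch gestures at this but does not line up these certificates, and without them the prefix need not satisfy either case of Definition~\ref{def:bad2}.

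Two further structural points you omit. When the two nodes carrying the repeated edge lie in different branches (or different trees), you need to know that all \emph{earlier} sets in the spliced sequence are pairwise edge-disjoint before you can apply $\E$ to conclude they share exactly $|G_j|$ vertices with the prior union; the paper gets this by an induction on the depth $2l-1$ combined with a minimality assumption on the depth $2l'-1$ of the second node, not merely by ``taking the first failure'' in a single left-to-right scan. And in the cross-branch case with $|G_s|=1$ the paper must \emph{reorder} the spliced sequence (moving the label of the deeper node to the end) to make the Case~B witness sit at the terminal position; your splice-then-truncate scheme as described does not produce a sequence in which the set playing the role of ``$G_l$'' in Definition~\ref{def:bad2} is the last one. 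These are not cosmetic issues: Definition~\ref{def:bad2} constrains only the final element of the sequence, so which element ends up last determines whether $\E$ applies at all.
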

\begin{proof}

Assume $\E$ occurs. We have the following claim.

\begin{claim}
\label{claim:clm0}
Let $P = (v_0, u_1, v_1, \ldots, u_L, v_L)$ denote an arbitrary path in
$T_{F,L}^{*}$, starting with some root $v_0$ and ending with some leaf. Let $G_j$
be the label of node $u_j$ and let $g_j$ be the label of node $v_j$ (so that
$g_0 \in F$).  Then for every $j \in [L]$: $G_j$ shares $0$ edges with $F \cup
\bigcup_{k<j} G_k$.
\end{claim}
\begin{proof}
Suppose for the sake of contradiction that the claim is false, and fix the
minimal $l \in [L]$ for which $G_l$ shares at least one edge with $F \cup \bigcup_{k<l}
G_k$. Consider the sequence $S = (G_1, G_2, \ldots, G_l)$. We shall reach a
contradiction by showing that $S$ or some prefix of $S$ is a bad-sequence.

A key observation is this: for all $j \in [l-1]$, $G_j$ shares $|G_j|$ vertices
and $0$ edges with $F \cup \bigcup_{k<j} G_k$.  To see that the observation
holds, first note that the minimality of $l$ implies that for all $j \in
[l-1]$, $G_j$ shares $0 \le |G_j| - 1$ edges with $F \cup \bigcup_{k<j} G_k$.
In addition, trivially, for all $j \in [l-1]$, $G_j$ shares at least $|G_j|$
vertices with $F \cup \bigcup_{k<j} G_k$. These facts together with $\E$ now
imply that there cannot be $j \in [l-1]$ such that $G_j$ shares $|G_j| + 1$
vertices with $F \cup \bigcup_{k<j} G_k$. 

Suppose that $|G_l| = 2$.  By assumption we have that $G_l$ shares at least one
edge with $F \cup \bigcup_{k<l} G_k$, which also implies that $G_l$ shares
$|G_l| + 1$ vertices with $F \cup \bigcup_{k<l} G_k$.  Hence, by the key
observation above, in order to show that $S$ is a bad-sequence and reach a
contradiction, it remains to show that $G_l$ shares $1 = |G_l| - 1$ edge with
$F \cup \bigcup_{k<l} G_k$.
Suppose on the contrary that $G_l$ shares both of its $2$ edges with $F \cup
\bigcup_{k<l} G_k$.  Notice that since $F$ is triangle-free, this implies that
$l \ge 2$, so $g_{l-2}$ is well defined.  Write $g_{l-2} = \{x,y\}$ and
$g_{l-1} = \{x,z\}$ and note that $z \notin \{x, y\}$, $G_{l-1} \in
\Lambda^{\star}(g_{l-2},i)$ and $G_l \in \Lambda^{\star}(g_{l-1},i)$.  Now,
note that the edge in $G_l$ that is adjacent to $z$ must also be an edge in
$G_{l-1}$. This is true since otherwise, $G_{l-1}$ will share the vertex $z$
with $F \cup \bigcup_{k<l-1} G_k$, which is clearly not the case as by the key
observation above $G_{l-1}$ shares only vertices from $\{x, y\}$ with $F \cup
\bigcup_{k<l-1} G_k$.  The only possible edge to be adjacent in $G_l$ to $z$
and be in $G_{l-1}$ is the edge $\{y, z\}$. Hence we get that $y$ is a vertex
of $G_l$. Therefore, we conclude that $g_{l-2} \in G_l$.  But by the definition
of $T_{F,L}^{*}$, $g_{l-2} \notin G_l$. Thus, $G_l$ shares $1$ edge with $F
\cup \bigcup_{k<l} G_k$ as needed.

Next assume that $|G_l| = 1$. By assumption we have that $G_l$ shares its edge
with $F \cup \bigcup_{k<l} G_k$. Since $\TF_i \cup F$ is triangle-free, this
implies that $l \ge 2$ and so $g_{l-2}$ is well defined. Let $x, y, z$ be as
defined in the previous paragraph. Note that either $x$ or $z$ are vertices of
$G_l$.  First we claim that $z$ cannot be a vertex of $G_l$.  Indeed, if $z$
was a vertex of $G_l$ then by a similar argument to that in the previous
paragraph we get that $G_l$ must be the edge $\{y,z\}$.  But this implies that
$\{g_{l-2}, g_{l-1}, g_l\}$ is a triangle and thus contradicts the definition
of $T_{F,L}^{*}$.  Therefore, $x$ is a vertex of $G_l$. We next argue that
$(G_j)_{j=1}^{l-1}$ is a bad-sequence, and by that get a contradiction.
Note that $\{x,y\}$ is an edge in $F \cup \bigcup_{k<l-1} G_k$ such that
$G_{l-1} \in \Lambda^{\star}(\{x,y\},i)$ and that $\{x,z\}$ is an edge in $G_{l-1}$.
Let $\{w,x\}$ be the edge in $G_l$. By definition of $T_{F,L}^{*}$ we have that $w
\notin \{x,y,z\}$.  This implies, since we assume that $\{w,x\}$ is an edge in
$F \cup \bigcup_{k<l} G_k$, that $\{w,x\}$ is an edge in $F \cup
\bigcup_{k<l-1} G_k$.  Since $|G_l| = 1$ we have that $\{w,z\} \in \TF_i$.
With the key observation above it now follows by definition that
$(G_j)_{j=1}^{l-1}$ is a bad-sequence. 
\end{proof}

The next claim, when combined with Claim~\ref{claim:clm0}, implies the
proposition.  \begin{claim}
Fix $1 \le l \le L$ and let $u$ be a node at distance $2l-1$ from a root in
$T_{F,L}^{*}$.  Fix $1 \le l' \le l$ and let $u'$ be a different node at distance
$2l'-1$ from a root in $T_{F,L}^{*}$. Then the labels of $u$ and $u'$ share $0$ edges.
\end{claim} \begin{proof}
The proof is by induction on $l$.  For the base case $l=1$, let $u$ and $u'$ be
two distinct nodes at distance $1$ from the roots of $T_{F,L}^{*}$.  Let $G$
and $G'$ be the labels of $u$ and $u'$ respectively.  Assume for the sake of
contradiction that $G$ and $G'$ share at least one edge.  We claim that either
$(G)$ or $(G, G')$ is a bad-sequence thus reaching the desired contradiction.
To see that this indeed holds, note first that by Claim~\ref{claim:clm0}, $G$
shares $|G|$ vertices and $0$ edges with $F$ and $G'$ shares $|G'|$ vertices
and $0$ edges with $F$. Let $v$ and $v'$ be the parents of $u$ and $u'$
respectively. Since $G$ and $G'$ share at least one edge and $u \ne u'$, we
have that $v \ne v'$. Therefore $G$ and $G'$ share exactly $1$ edge.
Now, if $|G'| = 2$ it follows that $G'$ shares $|G'|+1$ vertices and $|G'|-1$
edges with $F \cup G$; this implies that $(G, G')$ is a bad-sequence.  Next,
assume $|G'| = 1$.  Let $\{x,y\} \in F$ and $\{w,x\} \in F$ be the labels of
$v$ and $v'$ respectively.  Let $z$ be the vertex of $G$ and $G'$ that is not
in $F$ so that $G$ and $G'$ share the edge $\{x,z\}$. Clearly $w \ne y$.  In
addition, since $|G'| = 1$ we have that $\{w, z\} \in \TF_i$.  It follows
that $(G)$ is a bad-sequence.

Fix $2 \le l \le L$ and assume the claim is valid for $l-1$. Let $u$ be a node
at distance $2l-1$ from a root in $T_{F,L}^{*}$. Fix $1 \le l' \le l$ and let
$u'$ be a different node at distance $2l'-1$ from a root in $T_{F,L}^{*}$.
Assume for the sake of contradiction that the label of $u$ shares at least one
edge with the label of $u'$.  Without loss of generality we further assume that
$l'$ is minimal in the following sense: the label of $u$ shares $0$ edges with
the label of every node at odd distance less than $2l' - 1$ from the root of
$T_{F,L}^{*}$.  By Claim~\ref{claim:clm0}, we may also assume that $u'$ is not
a node on the path from a root to $u$ in $T_{F,L}^{*}$.

Let $P$ be the unique path from a root to $u$ in $T_{F,L}^{*}$. Let $P'$ be the
longest unique path in $T_{F,L}^{*}$ that ends with $u'$ and which do not
contain a node from $P$.  Traverse the nodes along the path $P$ and then
traverse the nodes along the path $P'$, ending each traversal at the nodes $u$
and $u'$ respectively.  Let $(u_1, u_2, \ldots, u_s)$ be the nodes so traversed
that are at odd distances from the roots of the forest, in order of their
traversal. By construction, $u_l = u$ and $u_s = u'$. We note that $2 \le s \le
2L$.  Let $G_j$ be the label of node $u_j$ and set $S_1 = (G_1, G_2, \ldots,
G_s)$.  Let $S_2 = (G_1, G_2, \ldots, G_{l-1}, G_{l+1}, G_{l+2}, \ldots,
G_{s-1}, G_l)$. In words, $S_2$ is obtained from $S_1$ by first removing $G_l$
and $G_s$ and then concatenating $G_l$ to the end of the new sequence.  Note
that $G_l$ and $G_s$ are the labels of $u$ and $u'$ respectively and that by
assumption $G_l$ and $G_s$ share at least one edge.  We show below that either
$S_1$ or $S_2$ is a bad-sequence and by that reach the desired contradiction.

Assume that $|G_s| = 2$. We show that $S_1$ is a bad-sequence.  By
Claim~\ref{claim:clm0}, the minimality of $l'$ and the induction hypothesis we
have that for every $j \in [s-1]$, $G_j$ shares $0$ edges with $F \cup
\bigcup_{k<j} G_k$. Therefore, by $\E$ we also have that for every $j \in
[s-1]$, $G_j$ shares $|G_j|$ vertices with $F \cup \bigcup_{k<j} G_k$.  Let
$v_l$ be the parent of $u_l$ and $v_s$ the parent of $u_s$.  Let $g_l$ be the
label of $v_l$ and $g_s$ the label of $v_s$.  Since $u_l \ne u_s$ and yet $G_l$
and $G_s$ share at least one edge, we get that $v_l \ne v_s$. This implies by
Claim~\ref{claim:clm0} and the induction hypothesis that $g_l \ne g_s$. This,
in turn, implies that $G_l$ shares \emph{exactly} $1$ edge with $G_s$. In what
follows we show that $G_s$ shares $0$ edges with $F \cup \bigcup_{k < l, l < k
< s} G_k$.  This will give us that $G_s$ shares $|G_s| + 1$ vertices and $1 =
|G_s| - 1$ edges with $F \cup \bigcup_{k < s} G_k$, which given the above
implies that $S_1$ is a bad-sequence.
The fact that $G_s$ shares $0$ edges with $F \cup \bigcup_{l < k < s} G_k$
follows from Claim~\ref{claim:clm0}. We claim that $G_s$ shares $0$ edges with
$\bigcup_{k < l} G_k$. Indeed, if $G_s$ does share at least one edge with
$\bigcup_{k < l} G_k$, then since $G_s$ also shares at least one edge with
$G_l$, we get that $G_l$ shares $|G_l| + 1$ vertices with $F \cup \bigcup_{k <
l} G_k$. But since $l \in [s-1]$, we've ruled out that possibility above. 

Assume that $|G_s| = 1$. We show that $S_2$ is a bad-sequence.  For brevity,
rewrite $S_2 = (F_1, F_2, \ldots, F_{s-1})$ and note that $F_{s-1}$ is the
label of $u_l$.  By Claim~\ref{claim:clm0}, the minimality of $l'$ and the
induction hypothesis we have that for every $j \in [s-1]$, $F_j$ shares $0$
edges with $F \cup \bigcup_{k<j} F_k$. Therefore, by $\E$ we also have that for
every $j \in [s-1]$, $F_j$ shares $|F_j|$ vertices with $F \cup \bigcup_{k<j}
F_k$.
Define $g_l, g_s$ as in the previous paragraph and note that for the same
reasons as above we have that $g_l \ne g_s$. Also note that $g_l, g_s \in F
\cup \bigcup_{k<s-1} F_k$. Write $g_l = \{x, y\}$ and let $z$ be the vertex of
$G_l$ that is not in $\{x, y\}$.  Assume without loss of generality that $G_l$
and $G_s$ share the edge $\{x, z\}$.  Since $z$ is not a vertex of $F \cup
\bigcup_{k<s-1}$, we get that $x$ is a vertex in $g_s$. Write $g_s = \{w,x\}$
and note that $w \ne y$.  Lastly, since $|G_s| = 1$ we have that $\{w, z\} \in
\TF_i$.  Therefore, by definition, $S_2$ is a bad-sequence.
\end{proof} 
With that we complete the proof of the proposition.
\end{proof}

\def\Seq{\textrm{Seq}}
\def\Seqq#1#2{\textrm{Seq}_{#2}(#1)}

\begin{proposition}
\label{eq:pe3}
$\prob{\E} \ge 1 - M^{-1/10}$.
\end{proposition}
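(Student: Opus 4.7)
The plan is to apply Markov's inequality to the count of bad-sequences $S=(G_1,\ldots,G_l)$ for which $\bigcup_j G_j \subseteq \B_{i+1}^{*}$. Every such $\bigcup_j G_j$ is automatically contained in $\B_{i+1}^{\star}$ (this is built into the definition of $\Lambda^{\star}$), and $\B_{i+1}^{*}$ is formed from $\B_{i+1}^{\star}$ by independently retaining each edge with probability $mM^{-1}$; hence
\[
\prob{\bigcup_j G_j \subseteq \B_{i+1}^{*}} = (mM^{-1})^{|\bigcup_j G_j|}.
\]
So it suffices to bound the sum, over bad-sequences, of $(mM^{-1})^{|\bigcup_j G_j|}$ by $O(M^{-1/10})$. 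For each fixed length $l \in \{1,\ldots,2L\}$, I factor this sum position by position: the $l-1$ non-terminal positions are independent extensions satisfying the second clause of Definition~\ref{def:bad2} (one new vertex, $|G_j|$ new edges), and the terminal position is further restricted by the Case~(i)/(ii) dichotomy of the third clause.

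At each non-terminal position $j$, one first selects a parent edge $g_j$ from the $O(L)$ edges of $F \cup \bigcup_{k<j} G_k$ (since $|F|=O(1)$ and each $G_k$ has at most two edges), then $G_j \in \Lambda^{\star}_1(g_j,i) \cup \Lambda^{\star}_2(g_j,i)$, weighted by $(mM^{-1})^{|G_j|}$. By the cardinalities in property~P1 of $\E^{\star}$, by Fact~\ref{fact:f1}, and by $m\phi(i\delta)=\lfloor n^{100\epsilon}\rfloor$, the per-position weighted sum is
\[
O\bigl(M\Phi(i\delta)\phi(i\delta)\cdot mM^{-1} + M^2\phi(i\delta)^2 \cdot m^2M^{-2}\bigr) = O\bigl(m\Phi(i\delta)\phi(i\delta) + m^2\phi(i\delta)^2\bigr) = O(n^{200\epsilon}).
\]

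The terminal position is where the ``badness'' of $S$ must be cashed in, and Case~(ii) is the main obstacle. In Case~(i), all $|G_l|+1$ vertices of $G_l$ lie in the $O(L)$-size vertex set of $F \cup \bigcup_{k<l} G_k$, so only $O(1)$ configurations arise; since $G_l$ still contributes at least one new edge, the contribution is $O(mM^{-1})$. In Case~(ii), $G_l$ introduces a single new vertex $z$ that must simultaneously be a neighbor in $\TF_i$ of the vertex $y$ opposite $x$ on $g_l$ (this is forced by $G_l \in \Lambda^{\star}$) and a neighbor in $\TF_i$ of some $w$ with $\{w,x\} \in F \cup \bigcup_{k<l} G_k$ and $w \ne y$, while the remaining edge(s) of $G_l$ lie in $\B_{i+1}^{\star}$. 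For $|G_l|=1$ (with $\{x,z\}\in\B_{i+1}^{\star}$) this is exactly the hypothesis of the first item of~P2, and for $|G_l|=2$ (with $\{x,z\},\{y,z\}\in\B_{i+1}^{\star}$) of the second item; both yield at most $(\ln n)^2$ choices for $z$. Multiplying by the $O(L^2)$ choices for the pair $(g_l,w)$ and by the edge-weight $(mM^{-1})^{|G_l|}$, the terminal contribution in either case is $O((\ln n)^2 \cdot mM^{-1})$.

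Combining, the length-$l$ sum is at most $O\bigl((n^{200\epsilon})^{l-1}(\ln n)^2 mM^{-1}\bigr)$. From Fact~\ref{fact:f1} and the definitions $m=\lfloor n^{100\epsilon}\rfloor\phi(i\delta)^{-1}$ and $M=n^{20000\epsilon}$, one has $mM^{-1} \le n^{-19000\epsilon}$, and with $l \le 2L \le 82$ the worst-case exponent is $200\cdot 81\epsilon - 19000\epsilon = -2800\epsilon$. Summing over $l \le 2L$ contributes only an $O(L)$ factor, so the total is $O(n^{-2500\epsilon}) \le M^{-1/10}$, and Markov's inequality gives $\prob{\E^c} \le O(M^{-1/10})$ as claimed. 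The delicate point throughout is matching each structural clause of the bad-sequence to the exact co-degree estimate in $\E^{\star}$ (P2's two items for Case~(ii)) or, as a fallback, the precondition $|\Lambda_0(\{w,y\},i)| \le i n^{5\epsilon}$ of Lemma~\ref{lemma:edge}.
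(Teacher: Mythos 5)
Your proposal is correct and follows essentially the same route as the paper: a first-moment (Markov) bound on the number of bad-sequences landing in $\B_{i+1}^{*}$, with the per-position counts controlled by P1 of $\E^{\star}$ and the terminal Case~(ii) count controlled by the two $(\ln n)^2$ co-degree items of P2, exactly as in the paper's stratification into $\Seqq{l,c}{1}$ and $\Seqq{l,c}{2}$. The only difference is bookkeeping — you fold the singleton/pair distinction into a single $O(n^{200\epsilon})$ per-position factor rather than tracking the parameter $c$ separately — and your exponent arithmetic still clears $M^{-1/10}$ comfortably.
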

\begin{proof}
For a bad-sequence $S = (G_1, G_2, \ldots, G_l)$, write $\event{S \subseteq
\B_{i+1}^{*}}$ for the event that for all $j \in [l]$, $\event{G_j \subseteq
\B_{i+1}^{*}}$. Let $Z$ be the random variable that counts the number of
bad-sequences $S$ for which $\event{S \subseteq \B_{i+1}^{*}}$.  It suffices to
show that $\expec{Z} \le M^{-1/10}$.

For $l \in [2L]$, $0 \le c < l$, let $\Seqq{l,c}{1}$ denote the set of all
bad-sequences $S = (G_1, G_2, \ldots, G_l)$ with $c = |\{j : |G_j| = 1, j <
l\}|$ such that $G_l$ shares $|G_l|+1$ vertices and at most $|G_l|-1$ edges
with $F \cup \bigcup_{k<l} G_k$.  For $l \in [2L]$, $0 \le c < l$, let
$\Seqq{l,c}{2}$ denote the set of all bad-sequences $S = (G_1, G_2, \ldots,
G_l)$ with $c = |\{j : |G_j| = 1, j < l\}|$ that are not in $\Seqq{l,c}{1}$.
Then
\begin{eqnarray}
\label{eq:sum2}
\expec{Z} &=& 
  \sum_{l \in [2L]} 
  \sum_{0 \le c < l}
  \sum_{j \in \{1, 2\}} 
  \sum_{S \in \Seqq{l,c}{j}} \prob{S \subseteq \B_{i+1}^{*}}.
\end{eqnarray}
Below we show that 
\begin{eqnarray}
\label{eq:pil1}
\forall l \in [2L], \,\, 0 \le c < l. \,\,\,\,\,
\sum_{S \in \Seqq{l,c}{1}} \prob{S \subseteq \B_{i+1}^{*}} &\le& M^{-1/9}, \\
\label{eq:pil2}
\forall l \in [2L], \,\, 0 \le c < l. \,\,\,\,\,
\sum_{S \in \Seqq{l,c}{2}} \prob{S \subseteq \B_{i+1}^{*}} &\le& M^{-1/9}.
\end{eqnarray}
From~(\ref{eq:sum2}),~(\ref{eq:pil1})~and~(\ref{eq:pil2}) and since $L = O(1)$,
we get that $\expec{Z} \le M^{-1/10}$ as required.  

We prove~(\ref{eq:pil1}). Fix $l \in [2L]$, $0 \le c < l$. We first count the
number of sequences $S = (G_1, G_2, \ldots, G_l)$ in $\Seqq{l,c}{1}$. To do so,
we construct such a sequence iteratively. First, we choose the cardinalities of
the first $l-1$ subgraphs in $S$. Note that there are $\binom{l-1}{c} = O(1)$
possible choices for the cardinalities.  Suppose we have already chosen the
first $j-1$ subgraphs in $S$ for some $j < l$. Given that, we count the number
of choices for $G_j$ assuming $j \ge 1$.  There are $O(1)$ possible choices for
an edge $g \in F \cup \bigcup_{k<j} G_k$ for which $G_j \in \Lambda^{\star}(g,
i)$.  Given $g$: if $|G_j|$ is to be of size $1$ then there are at most
$\Lambda^{\star}_1(g, i)$ choices for $G_j$ and if $|G_j|$ is to be of size $2$
then there are at most $\Lambda^{\star}_2(g, i)$ choices for $G_j$.  Given that
we have already chosen the first $l-1$ subgraphs in $S$, the number of choices
for $G_l$ is at most $O(1)$, since the vertices of $G_l$ are all in $F \cup
\bigcup_{k<l} G_k$.  Therefore, by $\E^{\star}$
the number of sequences in $\Seqq{l,c}{1}$ is at most
\begin{eqnarray*}
O(1) \cdot \big(M^2\phi(i\delta)^2\big)^{l-1-c} \cdot \big(M\Phi(i\delta)\phi(i\delta)\big)^{c}.
\end{eqnarray*}
Even if we condition on the event that $a_1$ edges of $F$ are in $\B_{i+1}$ and
$a_2$ edges of $F$ are not in $\B_{i+1}$, we get that the probability of
$\event{S \subseteq \B_{i+1}^{*}}$ for $S \in \Seqq{l,c}{1}$ is at most 
\begin{eqnarray}
\label{eq:ka}
\bigg(\frac{m^2}{M^2}\bigg)^{l-1-c} \cdot \bigg(\frac{m}{M}\bigg)^{c} \cdot \frac{m}{M}.
\end{eqnarray}
Hence, 
\begin{eqnarray*}
\sum_{S \in \Seqq{l,c}{1}} \prob{S \subseteq \B_{i+1}^{*}} &\le&
    O(1) \cdot
    \big(m^2\phi(i\delta)^2\big)^{l-1-c} \cdot 
    \big(m\Phi(i\delta)\phi(i\delta)\big)^{c} \cdot \frac{m}{M} \\
&\le& 
    O(1) \cdot m^{2l-2-2c} \cdot (m\ln n)^c \cdot \frac{m}{M} \\
&\le& M^{-1/9},
\end{eqnarray*}
where the second inequality follows from Fact~\ref{fact:f1} and the last
inequality follows from the definition of $L, m$ and $M$.  This gives us the
validity of~(\ref{eq:pil1}).

It remains to prove~(\ref{eq:pil2}).  Fix $l \in [2L]$, $0 \le c < l$. As
before, we first count the number of sequences $S = (G_1, G_2, \ldots, G_l)$ in
$\Seqq{l,c}{2}$ and we do it by constructing such a sequence iteratively.  The
number of choices for the first $l-1$ subgraphs in $S$ is exactly as in the
previous case.  Suppose we have already chosen the first $l-1$ subgraphs in
$S$.  We claim that the number of choices for $G_l$ is at most $O((\ln n)^2)$.
Indeed, there are $O(1)$ choices for an edge $\{x, y\} \in F \cup \bigcup_{k<l}
G_k$ such that $G_l \in \Lambda^{\star}(\{x,y\},i)$. Given $\{x,y\}$, there are
at most $O(1)$ choices for an edge $\{w, x\} \in F \cup \bigcup_{k<l} G_k$ such
that $w \ne y$. Furthermore, given $\{x,y\}$ and $\{w,x\}$, by $\E^{\star}$
(specifically by P2) there are at most $2(\ln n)^2$ choices for $G_l \in
\Lambda^{\star}(\{x,y\},i)$ which has a vertex $z$ that is not a vertex of $F
\cup \bigcup_{k<l} G_k$ and such that $\{x, z\} \in G_l$ and $\{w,z\} \in
\TF_i$. Therefore, by $\E^{\star}$ the number of sequences in $\Seqq{l,c}{2}$
is at most
\begin{eqnarray*}
O(1) \cdot \big(M^2\phi(i\delta)^2\big)^{l-1-c} \cdot \big(M\Phi(i\delta)\phi(i\delta)\big)^{c} 
   \cdot (\ln n)^2.
\end{eqnarray*}
Even if we condition on the event that $a_1$ edges of $F$ are in $\B_{i+1}$ and
$a_2$ edges of $F$ are not in $\B_{i+1}$, we get that the probability of
$\event{S \subseteq \B_{i+1}^{*}}$ for $S \in \Seqq{l,c}{2}$ is at
most as given in~(\ref{eq:ka}).  Therefore,
\begin{eqnarray*}
\sum_{S \in \Seqq{l,c}{2}} \prob{S \subseteq \B_{i+1}^{*}} &\le&
    O(1) \cdot
    \big(m^2\phi(i\delta)^2\big)^{l-1-c} \cdot 
    \big(m\Phi(i\delta)\phi(i\delta)\big)^{c} \cdot \frac{m}{M} \cdot (\ln n)^2\\
&\le& 
    O(1) \cdot m^{2l-2-2c} \cdot (m\ln n)^c \cdot \frac{m}{M} \cdot (\ln n)^2\\
&\le& M^{-1/9},
\end{eqnarray*}
where as before, the second inequality follows from Fact~\ref{fact:f1} and the
last inequality follows from the definition of $L, m$ and $M$.  This gives us
the validity of~(\ref{eq:pil2}). With that we complete the proof.
\end{proof}

\section{Proof of Lemma~\ref{lemma}} \label{section:5}
%
Assume that $\B_{i+1}^*$ was chosen and condition on $\E^{*} \cap \E^{*}_F$.
Fix an edge $f \in F$. Note that we either condition on the event that $f$ is
in $\B_{i+1}$ or not. For simplicity of presentation, we do not choose right
now which of these two options hold. The exact choice will be made implicitly
below, whenever we condition on an event which is concerned with the birthtime
$\beta_{i+1}(f)$.

Some remarks regarding $T_{f,L}^{*}$ follow.  The event
$\E^{*}_{F}$ says that every label of some node in $T_{f,L}^{*}$ is a label of
exactly one node in $T_{f,L}^*$.  Therefore, we shall refer from now on to the
nodes of $T_{f,L}^{*}$ \emph{by their labels}.  The event $\E^{*}$ implies,
using the definition of $T_{f,L}^{*}$ and Fact~\ref{fact:f1}, that for every
non-leaf node $g$ at even distance from the root of $T_{f,L}^{*}$,
\begin{eqnarray}
\label{eq:child1} \text{Number of children of $g$ that are of size $1$} &=& 2m
\Phi(i\delta) \phi(i\delta) (1 \pm 1.02\Gamma(i)), \\
\label{eq:child2} \text{Number of children of $g$ that are of size $2$} &=& m^2
\phi(i\delta)^2 (1 \pm 1.02\Gamma(i)).
\end{eqnarray}

We need to define the following two additional rooted trees.
\begin{definition}[$T_\infty, T_l$] \mbox{} \begin{itemize}
\item Let $T_\infty$ be an infinite rooted tree, defined as follows.  Every
node $g$ at even distance from the root has two sets of children.  One set
consists of children which are singletons and the other set consists of
children which are sets of size $2$.  Every node $G$ at odd distance from the
root of $T_\infty$, which is a set of size $|G| \in \{1, 2\}$, has exactly
$|G|$ children. Lastly, for every node $g$ at even distance from the root:
\begin{eqnarray*}
\text{Number of children of $g$ that are of size $1$} &=&
\ceil{2m \Phi(i\delta) \phi(i\delta)}, \\
\text{Number of children of $g$ that are of size $2$} &=&
m^2 \phi(i\delta)^2.
\end{eqnarray*}
\item Let $0 \le l \le L$. Define $T_l$ to be the tree that is obtained by
cutting from $T_\infty$ every subtree that is rooted at a
node whose distance from the root of $T_\infty$ is larger than $2l$.
\end{itemize}
\end{definition}
\begin{remark}
Note that $m^2\phi(i\delta)^2$ is an integer.  It would be convenient to assume
from now on that $2m\Phi(i\delta)\phi(i\delta)$ is also an integer. Hence, for
example, the number of children of the root of $T_\infty$ that are of size $1$
is exactly $2m\Phi(i\delta)\phi(i\delta)$. We explain in Section~\ref{sec:last}
how to modify our proof for the case where $2m\Phi(i\delta)\phi(i\delta)$ is
not an integer. 
\end{remark}

We continue with some more setup.
Note that for every node $g \ne f$ at even distance from the root of
$T_{f,L}^{*}$, $\beta_{i+1}(g)$ is distributed uniformly at random in the
interval $[0, mn^{-1/2}]$.  We extend the definition of $\beta_{i+1}$ so that
in addition, for every node $g$ at even distance from the root of $T_\infty$
(and hence from the root of $T_L$), the \emph{birthtime} $\beta_{i+1}(g)$ is
distributed uniformly at random in the interval $[0, mn^{-1/2}]$.

Let $T \in \{T_{f,L}^{*}, T_L, T_\infty\}$.  Let $g_0$ be a node at even
distance from the root of $T$.  We define the event that $\emph{$g_0$
survives}$ as follows.  If $g_0$ is a leaf (so that $T \ne T_\infty$) then
$g_0$ survives by definition.  Otherwise, $g_0$ survives if and only if for
every child $G$ of $g_0$, the following holds: if $\beta_{i+1}(g) < \min\{
\beta_{i+1}(g_0), \delta n^{-1/2} \}$ for all children $g$ of $G$, then $G$ has
a child that does not survive.

For a node $g$ at height $2l$ in $T_{f,L}^{*}$, let $p_{g,l}(x)$ be the probability
that $g$ survives under the assumption that $\beta_{i+1}(g) = xn^{-1/2}$.  Let
$p_l(x)$ be the probability, at the limit as $n \to \infty$, that the root of
$T_l$ survives under the assumption that $\beta_{i+1}(g) = xn^{-1/2}$, where
$g$ here denotes the root of $T_l$.  Let $p(x)$ be the probability, at the
limit as $n \to \infty$, that the root of $T_\infty$ survives under the
assumption that $\beta_{i+1}(g) = xn^{-1/2}$, where $g$ here denotes the root
of $T_\infty$.
One can show that $p_{g,l}(x), p_l(x)$ and $p(x)$ are all continuous and
bounded in the interval $[0,\delta]$. Hence, we can define the following
functions on the interval $[0,\delta]$:
\begin{eqnarray*}
P_{g,l}(x) := \int_0^x p_{g,l}(y) dy,  \,\,\,\,\,\,\,\,\,\, 
P_l(x) := \int_0^x p_l(y) dy \,\,\,\,\,\,\, \text{and }  \,\,\,\,\,\,\,
P(x) := \int_0^x p(y) dy.
\end{eqnarray*}
Observe that for all $x \in (0,\delta]$:
\begin{eqnarray*}
\probcnd{\text{The root $f$ of $T_{f,L}^{*}$ survives}}{\beta_{i+1}(f) < xn^{-1/2}} &=& \frac{P_{f,L}(x)}{x}, \\ 
\lim_{n \to \infty}
\probcnd{\text{The root $g$ of $T_l$ survives}}{\beta_{i+1}(g) < xn^{-1/2}} &=& \frac{P_{l}(x)}{x}, \\ 
\lim_{n \to \infty}
\probcnd{\text{The root $g$ of $T_\infty$ survives}}{\beta_{i+1}(g) < xn^{-1/2}} &=& \frac{P(x)}{x}.
\end{eqnarray*}

The next lemma, when combined with the discussion above and the definition of
$\A_{F,L}$, implies Lemma~\ref{lemma}.
\begin{lemma}
\label{lemma:final}
\mbox{}
\begin{enumerate}
\item[(i)]
$P(\delta)
 = \frac{\Phi((i+1)\delta) - \Phi(i\delta)}{\phi(i\delta)}$ and
$p(\delta) = \frac{\phi((i+1)\delta)}{\phi(i\delta)}$.
\item[(ii)] For all $x \in [0, \delta]$, $p_L(x) = p(x) (1 \pm o(\Gamma(i)\gamma(i)))$.
\item[(iii)] For all $x \in [0, \delta]$, $p_{f,L}(x) = p_L(x)(1 \pm 3\Gamma(i)\gamma(i))$. 
\end{enumerate}
\end{lemma}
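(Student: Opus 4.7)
For part (i), I compute the recursion satisfied by $p(x)$ directly from the survival rule. A child $G$ of the root $g_0$ of $T_\infty$ (with $\beta_{i+1}(g_0) = xn^{-1/2}$, $x\in[0,\delta]$) forces $g_0$ not to survive iff every child of $G$ has birthtime below $xn^{-1/2}$ and itself survives; by the independence of birthtimes and the definition of $P$, this event has probability $P(x)/m$ for a singleton child $G$ and $(P(x)/m)^2$ for a pair child. Taking the product over the $\lceil 2m\Phi(i\delta)\phi(i\delta)\rceil$ singleton and $m^2\phi(i\delta)^2$ pair children and letting $m\to\infty$ via $(1-a/m)^{bm}\to e^{-ab}$,
\begin{eqnarray*}
p(x) = \exp\bigl(-2\Phi(i\delta)\phi(i\delta) P(x) - \phi(i\delta)^2 P(x)^2\bigr) = \phi(i\delta)^{-1}\exp\bigl(-(\phi(i\delta) P(x) + \Phi(i\delta))^2\bigr).
\end{eqnarray*}
The substitution $U(x) := \phi(i\delta) P(x) + \Phi(i\delta)$ therefore satisfies $U(0) = \Phi(i\delta)$ and $U'(x) = \phi(i\delta)p(x) = \exp(-U(x)^2)$, which is precisely the ODE defining $\Phi$; by uniqueness, $U(x) = \Phi(x+i\delta)$. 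Evaluating at $x=\delta$ yields $P(\delta) = (\Phi((i+1)\delta)-\Phi(i\delta))/\phi(i\delta)$ and $p(\delta) = P'(\delta) = \phi((i+1)\delta)/\phi(i\delta)$, proving (i).

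For (ii), $p_l$ satisfies the finite-depth recursion $p_l(x) = \exp(-h(P_{l-1}(x)))$ for $l\ge 1$ with $p_0\equiv 1$, where $h(z) := 2\Phi(i\delta)\phi(i\delta) z + \phi(i\delta)^2 z^2$, and $p$ is the fixed point $p(x) = \exp(-h(P(x)))$. Writing $E_l := \sup_{x\in[0,\delta]} |p_l(x) - p(x)|$ and noting that $h'(z)\le 2\Phi(i\delta)\phi(i\delta) + 2\delta\phi(i\delta)^2 \le 4\gamma(i)/\delta$ on $[0,\delta]$ (by the definition of $\gamma(i)$), a mean-value-theorem estimate together with $|P_{l-1}(x) - P(x)| \le \delta E_{l-1}$ gives the contraction $E_l \le 8\gamma(i) E_{l-1}$. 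Since $E_0 \le h(\delta) = O(\gamma(i))$, iterating $L$ times yields $E_L \le O(\gamma(i))^{L+1} \le O((n^{-\epsilon}\ln n)^{L+1})$, which for $L\in\{40,41\}$ is $o(n^{-35\epsilon}) = o(\Gamma(i)\gamma(i))$ by Fact~\ref{fact:f1}; dividing by $p(x) = \Theta(1)$ yields (ii).

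For (iii), the trees $T_{f,L}^*$ and $T_L$ obey recursions of the same form, but in $T_{f,L}^*$ each non-leaf node $g$ at even distance from a root has its branching numbers perturbed by $(1\pm 1.01\Gamma(i))$ by $\E^*$ (the $O(1)$ further restrictions from Definition~\ref{def:Tgl} are negligible next to $2m\Phi(i\delta)\phi(i\delta)$ and $m^2\phi(i\delta)^2$). Define $\beta_l := \sup\{|p_{g,l}(y)/p_l(y)-1| : y\in[0,\delta],\, g\text{ at distance } 2l\text{ from a root of }T_{f,L}^*\}$ and expand $\log p_{g,l}(x) = \sum_G\log(1-\alpha_G)$ linearly (the $O(\alpha_G^2)$ tail is $O(\delta/m)$-small and negligible). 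Subtracting the analogous expansion for $\log p_l(x)$, the singleton terms contribute at most $2\Phi(i\delta)\phi(i\delta) P_{l-1}(x)(1.01\Gamma(i)+\beta_{l-1}+o(1)) \le 2\gamma(i)(1.01\Gamma(i)+\beta_{l-1})$, and the pair terms at most $\phi(i\delta)^2 P_{l-1}(x)^2(1.01\Gamma(i)+2\beta_{l-1}+o(1)) \le \gamma(i)(1.01\Gamma(i)+2\beta_{l-1})$. Summing gives $\beta_l \le 3.03\,\Gamma(i)\gamma(i)+4\gamma(i)\beta_{l-1}$; iterating from $\beta_0 = 0$ and summing the geometric series yields $\beta_L \le 3\Gamma(i)\gamma(i)$ for $n$ large (using $\gamma(i)=o(1)$), proving (iii).

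The main obstacle is (iii): we must carefully book-keep two intertwined multiplicative errors at every level—the branching perturbation from $\E^*$ and the compounded error in survival probabilities inherited from lower levels—and verify that the aggregate remains controlled by the single-level constant $3\Gamma(i)\gamma(i)$ rather than blowing up with $L$. The crucial saving is the contraction factor $4\gamma(i) = o(1)$ from Fact~\ref{fact:f1}, which makes the geometric series converge and gives a bound independent of $L$.
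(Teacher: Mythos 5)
Your proposal is correct in substance and, for parts (i) and (iii), takes essentially the paper's route. In (i) you derive the same functional equation $p(x)=\exp(-2\Phi(i\delta)\phi(i\delta)P(x)-\phi(i\delta)^2P(x)^2)$ and solve the same separable ODE; your substitution $U=\phi(i\delta)P+\Phi(i\delta)$ with $U'=\exp(-U^2)$ is just a cleaner packaging of the paper's $\erfi$ computation. In (iii) you run the same induction on height, tracking the two error sources (the branching perturbation coming from $\E^{*}$ and the inherited relative error in $P_{g',l-1}$) and exploiting the fact that the inherited error is damped by a factor $O(\gamma(i))=o(1)$ per level; the paper does this via explicit sandwiching quantities $Q_*\le p_{g,l}(x)\le Q^*$ rather than a logarithmic expansion, but the bookkeeping is the same. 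Part (ii) is where you genuinely diverge: the paper couples $T_L$ with $T_\infty$ combinatorially, observing that the two survival events can differ only if $T_L$ has a ``relevant'' leaf (a root-to-leaf path all of whose birthtimes beat the root's), splits by parity of $L$ to get one-sided monotonicity, and bounds the expected number of relevant leaves by roughly $\delta^{L-1}$; you instead view $p_l=\exp(-h(P_{l-1}))$ as a fixed-point iteration and prove the analytic contraction $E_l\le 4\gamma(i)E_{l-1}$ with $E_0\le h(\delta)\le 3\gamma(i)$. Your version is shorter, two-sided without the parity case split, and yields the comparable bound $O(\gamma(i))^{L+1}=o(\Gamma(i)\gamma(i))$, so it is a legitimate alternative.

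One arithmetic caveat in (iii): your recursion $\beta_l\le 3.03\,\Gamma(i)\gamma(i)+4\gamma(i)\beta_{l-1}$ iterates to $\beta_L\le 3.03\,\Gamma(i)\gamma(i)/(1-4\gamma(i))$, which is never $\le 3\Gamma(i)\gamma(i)$; moreover the correct per-level branching perturbation is $1.02\Gamma(i)$ rather than $1.01\Gamma(i)$ once the $O(1)$ children excluded by Definition~\ref{def:Tgl} are folded in (they cost an extra $o(\Gamma(i))$), which pushes your leading constant to about $3.06$. The worst case is when $\delta\Phi(i\delta)\phi(i\delta)$ and $\delta^2\phi(i\delta)^2$ are comparable, so the linear and quadratic terms both contribute a full $\gamma(i)$ and the per-level constant is genuinely above $3$. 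This is not a substantive gap: the paper's own proof has the identical slippage (its factors $(1+2.05\eta)(1+1.03\eta)$ already exceed $1+3\eta$), and nothing downstream needs the constant to be exactly $3$ -- replacing $3$ by, say, $3.5$ here and adjusting $3.99$, $4$ and $4.01$ accordingly in Lemma~\ref{lemma}, Lemma~\ref{lemma:toprove:2} and Corollary~\ref{cor:1} leaves every subsequent estimate intact. You should either loosen your target constant or tighten the geometric-series step; as written the final inequality $\beta_L\le 3\Gamma(i)\gamma(i)$ does not follow from your own recursion.
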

The proof of Lemma~\ref{lemma:final} is given in the
next three subsections.

\subsection{Proof of Lemma~\ref{lemma:final}~(i)} \label{sec:branchingI}
%
Clearly $p(0) = 1$ and $P(0) = 0$. Hence, from the definition of survival and
the definition of $p(x)$ and $P(x)$, we get that for every $x \in [0,\delta]$,
at the limit as $n \to \infty$,
\begin{eqnarray}
\label{eq:g0}
p(x) &=& \bigg(1 - \frac{P(x)^2}{m^2}
         \bigg)^{m^2 \phi(i\delta)^2} \,
       \bigg(1 - \frac{P(x)}{m}\bigg)^{2m\Phi(i\delta)\phi(i\delta)}   \\
\nonumber
     &=& \exp\Big( - P(x)^2 \phi(i\delta)^2 -
                    2P(x) \Phi(i\delta)\phi(i\delta) 
             \Big).
\end{eqnarray}
By the fundamental theorem of calculus, $p(x)$ is the derivative of
$P(x)$.  Hence, we view~(\ref{eq:g0}) as the separable differential
equation that it is. This equation has the following as an implicit solution:
\begin{eqnarray*}
\int \exp\big(
   P^2 \phi(i\delta)^2 +
   2 P \phi(i\delta) \Phi(i\delta) 
         \big) dP = x.
\end{eqnarray*}
Solving the above integral, we get
\begin{eqnarray}
\label{eq:g3}
\frac{\sqrt{\pi}}{2} \, \text{erfi}\big(\Phi(i\delta) + \phi(i\delta) P\big)
           = x + C.
\end{eqnarray}
With the initial condition $P(0) = 0$, we get from~(\ref{eq:g3}) that
\begin{eqnarray*}
\label{eq:C}
\frac{\sqrt{\pi}}{2} \,  \text{erfi}(\Phi(i\delta)) = C.
\end{eqnarray*}
%
Let $z \ge 0$ satisfy
\begin{eqnarray*}
\exp( 
    - z^2 \phi(i\delta)^2 
    - 2z \phi(i\delta) \Phi(i\delta)
    ) = \frac{\phi((i+1)\delta)}{\phi(i\delta)}.
\end{eqnarray*}
A simple analysis shows that
\begin{eqnarray*}
z = \frac{\Phi((i+1)\delta) - \Phi(i\delta)}{\phi(i\delta)}.
\end{eqnarray*}
Taking $P=z$ and $C = \frac{\sqrt{\pi}}{2} \erfi(\Phi(i\delta))$, 
we solve~(\ref{eq:g3}) for $x$ to get
\begin{eqnarray*}
x = \frac{\sqrt{\pi}}{2} \, \text{erfi}\big(\Phi(i\delta) + \phi(i\delta)P\big) - C  
  = \frac{\sqrt{\pi}}{2} \, \big(\text{erfi}(\Phi((i+1)\delta)) - \text{erfi}(\Phi(i\delta))\big) = \delta,
\end{eqnarray*}
where the last equality is by the fact that $\frac{\sqrt{\pi}}{2} \erfi(\Phi(x)) = x$.  Hence, $P(\delta) =
\frac{\Phi((i+1)\delta) - \Phi(i\delta)}{\phi(i\delta)}$ and $p(\delta) =
\frac{\phi((i+1)\delta)}{\phi(i\delta)}$.
This completes the proof.  
\begin{remark}
\label{remark:r2}
As a side note, we observe that $0 \le P(\delta) \le
\delta$. Hence we get from the above conclusion and from Fact~\ref{fact:f1}
that $\frac{\Phi((i+1)\delta) - \Phi(i\delta)}{\delta} = P(\delta)\phi(i\delta)/\delta
\ge 0$ and that
$\frac{\Phi((i+1)\delta) - \Phi(i\delta)}{\delta} = P(\delta)\phi(i\delta)/\delta
\le 1$.
\end{remark}

\subsection{Proof of Lemma~\ref{lemma:final}~(ii)} \label{sec:branchingII}
%
Assume first that $L$ is odd.  Let $g_0$ be the root of $T_L$ and $T_\infty$.
Further assume $\beta_{i+1}(g_0) = xn^{-1/2}$ for some $x \in [0, \delta]$.
Clearly if $g_0$ survives in $T_L$ then $g_0$ survives in $T_\infty$.  Hence
$p_L(x) \le p(x)$.  Below we show that $p_L(x) \ge p(x) - n^{-36\epsilon}$. We
claim that this last inequality implies $p_L(x) = p(x) (1 - o(\Gamma(i)
\gamma(i)))$, which gives the lemma. Indeed, using the fact that $x \le \delta$
and since trivially $P(x) \le x$, it follows from~(\ref{eq:g0}), the
definition of $\delta$ and Fact~\ref{fact:f1} that $p(x) \sim 1$.  In addition, by Fact~\ref{fact:f1}
we have that $\Gamma(i)\gamma(i) = \Omega(n^{-35\epsilon})$.  Therefore we get,
as needed, 
\begin{eqnarray*}
p_L(x) \ge p(x) (1 - n^{-36\epsilon}/p(x)) = p(x) (1 - o(\Gamma(i)\gamma(i))).
\end{eqnarray*}

Say that a node $g$ at even distance from the root of $T_L$ is \emph{relevant},
if $g$ and its sibling (if exists) have a smaller birthtime than their
grandparent, and in addition, their grandparent is either relevant or the root.
Observe that if the root of $T_\infty$ survives then either the root of $T_L$
survives, or else, there is a relevant leaf in $T_L$.  It remains to show that
the expected number of relevant leaves in $T_L$ is at most $n^{-36\epsilon}$.

Say that a leaf $g_L$ in $T_L$ is a \emph{$c$-type} if the path leading from
the root to $g_L$ contains exactly $c$ nodes $G$ at odd distance from the root,
which are sets of size $1$. Consider a path $(g_0, G_1, g_1, \ldots, G_L, g_L)$
from the root to a leaf $g_L$, where $g_L$ is a $c$-type. Let $\G$ be the union
of $\{g_j: j \in [L]\}$ together with the set $\{g : \text{$g$ is a sibling of
some $g_j$, $j \in [L]$}\}$.  Since $g_L$ is a $c$-type, we have $|\G| = 2L -
c$. Now if $g_L$ is relevant, then for every node $g \in \G$,
$\event{\beta_{i+1}(g) < \beta_{i+1}(g_0) = xn^{-1/2}}$ holds. This event
occurs with probability $(x/m)^{2L-c}$.  Hence, the probability that $g_L$ is
relevant is at most
\begin{eqnarray*}
\Big(\frac{x}{m}\Big)^{2L-c} =
\Big(\frac{x}{m}\Big)^{c} \,
\Big(\frac{x^2}{m^2}\Big)^{L-c}.
\end{eqnarray*}
The number of $c$-type leaves in $T_L$ is at most
\begin{eqnarray*}
2^{L} \, (2m\Phi(i\delta)\phi(i\delta))^c \,
(2m^2\phi(i\delta)^2)^{L-c} \le
(4m\ln n)^c \, (4m^2)^{L-c},
\end{eqnarray*}
where the inequality is by Fact~\ref{fact:f1}.
Hence, the expected number of relevant $c$-type leaves in $T_L$ is at most
\begin{eqnarray*}
\Big(\frac{x}{m}\Big)^{c} \,
\Big(\frac{x^2}{m^2}\Big)^{L-c} \,
(4m\ln n)^c \, (4m^2)^{L-c} \le (4x \ln n)^{2L-c}.
\end{eqnarray*}
Now, $(4x \ln n)^{2L-c} \le \delta^{2L-c}(4\ln n)^{2L-c} \le \delta^{L-1} \sim
n^{-40\epsilon}$, where the inequalities are by $x \le \delta$, $c \le L$ and
$(4\ln n)^{2L} \le \delta^{-1}$.  To complete the proof, note that if a leaf is
a $c$-type, then we have at most $L + 1 = O(1)$ possible choices for $c$.
Therefore, with the union bound we conclude that the expected number of
relevant leaves in $T_L$ is at most $n^{-36\epsilon}$.

Next assume that $L$ is even, let $g_0$ be as above and assume
$\beta_{i+1}(g_0) = xn^{-1/2}$.  The proof for this case is similar to the
previous case and so we only outline it. It is easy to verify that if $g_0$
doesn't survive in $T_L$ then $g_0$ doesn't survive in $T_\infty$. Hence
$p_L(x) \ge p(x)$. Now, if $g_0$ doesn't survive in $T_\infty$ then either the
root of $T_L$ doesn't survive, or else, there is a relevant leaf in $T_L$.  One
can now show using the same argument as above that the expected number of
relevant leaves in $T_L$ is at most $n^{-36\epsilon}$.  This completes the
proof.

\subsection{Proof of Lemma~\ref{lemma:final}~(iii)} \label{sec:branchingIII}
%
The following implies Lemma~\ref{lemma:final}~(iii).
\begin{proposition}
\label{proposition:clm000}
Let $x \in [0,\delta]$, $0 \le l \le L$.
Let $g$ be a node at height $2l$ in $T_{f,L}^{*}$. Then
\begin{eqnarray*}
p_{g,l}(x) = p_l(x)(1 \pm 3\Gamma(i)\gamma(i)).
\end{eqnarray*}
\end{proposition}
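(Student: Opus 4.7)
The plan is an induction on $l$, matching the survival probability $p_{g,l}(x)$ in the true tree $T_{f,L}^{*}$ to the idealised $p_l(x)$ of $T_l$ level by level. The base case $l=0$ is trivial: a node at height $0$ is a leaf and survives by definition, so $p_{g,0}(x) = 1 = p_0(x)$.

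For the inductive step, fix a node $g$ at height $2l$ in $T_{f,L}^{*}$ and condition on $\beta_{i+1}(g) = xn^{-1/2}$; since $x \in [0,\delta]$ the threshold $\min\{\beta_{i+1}(g), \delta n^{-1/2}\}$ simply equals $xn^{-1/2}$. Unfolding the definition of survival, a child $G$ of $g$ ``kills'' $g$ precisely when every edge of $G$ has birthtime below $xn^{-1/2}$ \emph{and} each such edge survives as a height-$2(l-1)$ node. The event $\E^{*}_{F}$ ensures that the labels of the grandchildren of $g$ are distinct, and by the recursive construction of $T_{f,L}^{*}$ this in turn forces the subtrees rooted at those grandchildren to share no edges at all, so the kill-events for different children of $g$ are mutually independent (they depend on disjoint collections of birthtimes). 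Consequently
\begin{equation*}
p_{g,l}(x) = \prod_{\{g_1\} \text{ child of } g} \Big(1 - \tfrac{P_{g_1,l-1}(x)}{m}\Big) \prod_{\{g_1,g_2\} \text{ child of } g} \Big(1 - \tfrac{P_{g_1,l-1}(x)\,P_{g_2,l-1}(x)}{m^2}\Big),
\end{equation*}
and the analogous product for $p_l(x)$ replaces each $P_{g_j,l-1}(x)$ by $P_{l-1}(x)$ and uses the ideal child counts from the definition of $T_l$.

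Now I would substitute the integrated inductive hypothesis $P_{g_j,l-1}(x) = P_{l-1}(x)(1 \pm 3\Gamma(i)\gamma(i))$ together with the child counts $2m\Phi(i\delta)\phi(i\delta)(1 \pm 1.02\Gamma(i))$ and $m^2\phi(i\delta)^2 (1 \pm 1.02\Gamma(i))$ provided by $\E^{*}$ via~(\ref{eq:child1}) and~(\ref{eq:child2}). Taking logarithms and expanding $\ln(1 - z) = -z + O(z^2)$---the $O(z^2)$ piece is negligible because $m$ is polynomially large in $n$---the difference $|\ln p_{g,l}(x) - \ln p_l(x)|$ is controlled by $2\Phi(i\delta)\phi(i\delta)P_{l-1}(x)$ and $P_{l-1}(x)^2\phi(i\delta)^2$, each multiplied by the combined relative slack from the two sources of error. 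The critical inequality is $P_{l-1}(x) \le x \le \delta$, which together with $\gamma(i) = \max\{\delta\Phi(i\delta)\phi(i\delta),\, \delta^2\phi(i\delta)^2\}$ gives $2\Phi(i\delta)\phi(i\delta)P_{l-1}(x) \le 2\gamma(i)$ and $P_{l-1}(x)^2\phi(i\delta)^2 \le \gamma(i)$. Hence the $\Gamma(i)$-slack in the child counts is effectively weighted by $\gamma(i)$, producing $|\ln p_{g,l}(x) - \ln p_l(x)| \le 3\Gamma(i)\gamma(i)(1 + o(1))$; exponentiating yields the stated bound.

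The main obstacle is keeping the constants tight enough to reach exactly the factor $3$. That factor matches the upper bound $2\gamma(i) + \gamma(i)$ on the size-$1$ and size-$2$ contributions to $-\ln p_l(x)$, so one must verify that the child-count slack $1.02\Gamma(i)$ and the inductive slack $3\Gamma(i)\gamma(i)$, after being propagated through the Taylor expansion, combine to a relative error within $3\Gamma(i)\gamma(i)$ rather than inflating it. The sharp constants $1.01$ in $\E^{*}$ and $1.02$ in~(\ref{eq:child1})--(\ref{eq:child2}) were chosen with precisely this accounting in mind; the higher-order cross-terms $\Gamma(i)^2\gamma(i)$ and $\Gamma(i)\gamma(i)^2$ are absorbed into the $(1 + o(1))$ factor via Fact~\ref{fact:f1}.
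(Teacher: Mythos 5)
Your proof is correct and follows essentially the same route as the paper's: induction on $l$, expressing $p_{g,l}(x)$ as a product over the children of $g$ (independence being exactly what $\E^{*}_F$ buys), then substituting the inductive hypothesis $P_{g',l-1}(x)=P_{l-1}(x)(1\pm 3\Gamma(i)\gamma(i))$ together with the child counts~(\ref{eq:child1})--(\ref{eq:child2}) and using $P_{l-1}(x)\le x\le\delta$ and the definition of $\gamma(i)$ to cap the error. The paper carries out the same bookkeeping multiplicatively via the inequality $\exp(-1/(z-1))<1-1/z<\exp(-1/z)$ rather than through logarithms, and its constant accounting is exactly as delicate as the one you describe in your final paragraph.
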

\begin{proof}
The proof is by induction on $l$. The assertion holds for the base case since
by definition, $p_{g,0}(x) = p_0(x) = 1$ for all $x \in [0,\delta]$.  Let $1
\le l \le L$ and assume that the proposition holds for $l-1$.  Fix $x \in
[0,\delta]$ and let $g$ be a node at height $2l$ in $T_{f,L}^{*}$. 

For brevity, define $\eta := \Gamma(i)\gamma(i)$.  Further, let
\begin{eqnarray*}
Q^* :=
&& \Big(1 - \frac{P_{l-1}(x)(1-3\eta) }{m}\Big)^{2m\Phi(i\delta)\phi(i\delta)(1-1.02\Gamma(i))} \cdot \\
&& \Big(1 - \frac{P_{l-1}(x)^2(1-3\eta)^2}{m^2}\Big)^{m^2\phi(i\delta)^2(1-1.02\Gamma(i))}
\end{eqnarray*}
and
\begin{eqnarray*}
Q_* :=
&& \Big(1 - \frac{P_{l-1}(x)(1+3\eta)}{m}\Big)^{2m\Phi(i\delta)\phi(i\delta)(1+1.02\Gamma(i))} \cdot \\ 
&& \Big(1 - \frac{P_{l-1}(x)^2(1+3\eta)^2}{m^2}\Big)^{m^2\phi(i\delta)^2(1+1.02\Gamma(i))}.
\end{eqnarray*}
Let $g'$ be a grandchild of $g$. By the induction hypothesis and
by definition of $P_{g',l-1}(x)$ and $P_{l-1}(x)$, 
\begin{eqnarray*}
P_{g',l-1}(x) = P_{l-1}(x)(1 \pm 3\eta).
\end{eqnarray*}
Thus, it follows from the
definition of survival and by~(\ref{eq:child1})~and~(\ref{eq:child2}) that
\begin{eqnarray*}
Q_* \le p_{g,l}(x) \le Q^*.
\end{eqnarray*}
It remains to bound $Q^*$ and $Q_*$. In what follows we use the fact that
\begin{eqnarray}
\label{eq:exp}
\forall z > 1. \,\,\, \exp(-1/(z-1)) < 1-1/z < \exp(-1/z).
\end{eqnarray}
To bound $Q^*$, we have
\begin{eqnarray*}
 \Big(1 - \frac{P_{l-1}(x)(1-3\eta)}{m}\Big)^{2m\Phi(i\delta)\phi(i\delta)}  
&\le& 
 \Big(1 - \frac{P_{l-1}(x)}{m}\Big)^{2m\Phi(i\delta)\phi(i\delta)(1 - O(\eta))(1-1/m)} \\
&\le&
 \Big(1 - \frac{P_{l-1}(x)}{m}\Big)^{2m\Phi(i\delta)\phi(i\delta)(1 - O(\eta))} \\
&\le&
 \Big(1 - \frac{P_{l-1}(x)}{m}\Big)^{2m\Phi(i\delta)\phi(i\delta)} 
 \Big(1 - \frac{\delta}{m}\Big)^{-O(m\Phi(i\delta)\phi(i\delta)\eta)} \\
&\le&
  \exp\big(-2P_{l-1}(x)\Phi(i\delta)\phi(i\delta)\big)
  \cdot (1 + o(\eta)),
\end{eqnarray*}
where the first inequality follows from~(\ref{eq:exp}); the second inequality
follows from the fact that $1/m = o(\eta)$, which in turn follows from the
definition of $m$ and from Fact~\ref{fact:f1}; the third inequality follows
since $P_{l-1}(x) \le x \le \delta$; and the last inequality follows
from~(\ref{eq:exp}) and Fact~\ref{fact:f1}. 
For similar reasons we also have that
\begin{eqnarray*}
 \Big(1 - \frac{P_{l-1}(x)^2(1-3\eta)^2}{m^2}\Big)^{m^2\phi(i\delta)^2}  
&\le& 
 \Big(1 - \frac{P_{l-1}(x)^2}{m^2}\Big)^{m^2\phi(i\delta)^2(1 - O(\eta))(1-1/m)} \\
&\le&
 \Big(1 - \frac{P_{l-1}(x)^2}{m^2}\Big)^{m^2\phi(i\delta)^2(1 - O(\eta))} \\
&\le&
 \Big(1 - \frac{P_{l-1}(x)^2}{m^2}\Big)^{m^2\phi(i\delta)^2} 
 \Big(1 - \frac{\delta^2}{m^2}\Big)^{-O(m^2\phi(i\delta)^2\eta)} \\
&\le& 
 \exp\big(-P_{l-1}(x)^2\phi(i\delta)^2\big)
 \cdot (1 + o(\eta)).
\end{eqnarray*}
In addition, since $P_{l-1}(x)(1-3\eta) \le x \le \delta$, and by definition
of $\gamma(i)$, we have
\begin{eqnarray*}
\Big(1 - \frac{P_{l-1}(x)(1-3\eta)}{m}\Big)^{-2m\Phi(i\delta)\phi(i\delta) \cdot 1.02\Gamma(i)} 
&\le& \Big(1 - \frac{\delta}{m}\Big)^{-2m\Phi(i\delta)\phi(i\delta) \cdot 1.02 \Gamma(i)} 
\le 1  + 2.05\eta,  \\
\Big(1 - \frac{P_{l-1}(x)^2(1-3\eta)^2}{m^2}\Big)^{-m^2\phi(i\delta)^2 \cdot 1.02\Gamma(i)} 
&\le& \Big(1 - \frac{\delta^2}{m^2}\Big)^{-m^2\phi(i\delta)^2 \cdot 1.02 \Gamma(i)} 
\le 1  + 1.03\eta.
\end{eqnarray*}
Then by the fact that
\begin{eqnarray*}
p_l(x) =
\exp\big( -P_{l-1}(x)^2\phi(i\delta)^2 -2P_{l-1}(x)^2\Phi(i\delta)\phi(i\delta) \big), 
\end{eqnarray*}
we can conclude that
\begin{eqnarray*}
Q^* \le p_l(x)  (1 + 3\eta).
\end{eqnarray*}
The argument for the lower bound on $Q_*$ is similar.
\end{proof}

\subsection{When $2m\Phi(i\delta)\phi(i\delta)$ isn't an integer}
\label{sec:last}
%
We have defined the tree $T_\infty$ so that for every node $g$ at even distance
from the root, the number of children of $g$ that are sets of size $1$ is
exactly $\ceil{2m\Phi(i\delta)\phi(i\delta)}$.  We further made the simplifying
assumption that $2m\Phi(i\delta)\phi(i\delta)$ is an integer. Reviewing our
proof above, we needed this simplifying assumption in order to get a relatively
simple solution to the differential equation in Section~\ref{sec:branchingI}.
Here we briefly explain how one can modify the proof above so as to handle the
case where $2m\Phi(i\delta)\phi(i\delta)$ is not an integer.  

The first step would be to take a random subtree of $T_{f,L}^*$.  Let $\zeta
\in [0.1, 0.9]$ be such that $\zeta \cdot 2m\Phi(i\delta)\phi(i\delta)$ is an
integer. Keep every subtree of $T_{f,L}^{*}$ that is rooted at a set of
size~$1$ with probability~$\zeta$. This gives us a random subtree of
$T_{f,L}^*$.  From now on we only care about this random subtree and so for
brevity, we refer to this subtree by $T_{f,L}^*$. Using the fact that $\E^*$
holds, one can show the following. With probability $1 - n^{-\omega(1)}$, for
every non-leaf node $g$ at even distance from the root of $T_{f,L}^{*}$, the
number of children of $g$ that are sets of size~$1$ is $\zeta \cdot 2m
\Phi(i\delta) \phi(i\delta) (1 \pm 1.02\Gamma(i))$ and the number of children
of $g$ that are sets of size~$2$ is as given in~(\ref{eq:child2}).
Given that, we change the definition of $T_\infty$ accordingly by asserting
that for every node $g$ at even distance from the root of $T_\infty$, the
number of children of $g$ that are sets of size $1$ is $\zeta \cdot
2m\Phi(i\delta)\phi(i\delta)$.

Having redefined the above trees, the second step is to redefine the
distribution of the birthtimes of the edges in $T_{f,L}^*$ and $T_\infty$.  The
birthtime of an edge that appears in a set of size~$1$ in $T_{f,L}^{*}$ or in
$T_\infty$ is redefined so that it is distributed uniformly at random in $[0,
\zeta \cdot mn^{-1/2}]$, whereas the birthtime of an edge that appears in a set
of size~$2$ in $T_{f,L}^{*}$ or in $T_\infty$ remains uniformly distributed at
random in $[0, mn^{-1/2}]$ as before. 

The rest of the proof is straightforward. In particular, the statement of
Lemma~\ref{lemma:final} is not changed. The only necessary other modifications
are the obvious ones that follow from the above changes in the definition of
$T_{f,L}^*$ and $T_\infty$ and the definition of the birthtimes of the edges
that appear in those trees.

\section*{Acknowledgment}
The author would like to kindly thank Prof. Joel Spencer for suggesting the
problem of estimating the number of small subgraphs 
in $\TF(n,p)$.


\begin{bibdiv}
\begin{biblist}

\bib{AlonSpencer}{book}{
   author={Alon, N.},
   author={Spencer, J.},
   title={The probabilistic method},
   edition={2nd ed.},
   publisher={Wiley, New York},
   year={2000},
}


\bib{Bohman}{article}{
      author={Bohman, T.},
       title={The triangle-free process},
        date={2009},
     journal={Advances in Mathematics},
        note={To appear},
}
\bib{BKeevash}{article}{
      author={Bohman, T.},
      author={Keevash, P.},
       title={The early evolution of the $H$-free process},
        note={To appear},
}

\bib{Boll00}{article}{
      author={Bollob{\'a}s, B{\'e}la},
      author={Riordan, Oliver},
       title={Constrained graph processes},
        date={2000},
     journal={Electr. J. Comb.},
      volume={7},
}




\bib{ErdosSW95}{article}{
      author={Erd\H{o}s, Paul},
      author={Suen, Stephen},
      author={Winkler, Peter},
       title={On the size of a random maximal graph},
        date={1995},
     journal={Random Struct. Algorithms},
      volume={6},
      number={2/3},
       pages={309\ndash 318},
}
\bib{Hoeffding}{article}{
      author={Hoeffding, W.},
       title={Probability inequalities for sums of bounded random variables},
        date={1963},
     journal={J. Amer. Statist. Assoc.},
      volume={58},
       pages={13--30},
}
\bib{uppertail}{article}{
      author={Janson, Svante},
      author={Rucinski, Andrzej},
       title={The infamous upper tail},
        date={2002},
     journal={Random Struct. Algorithms},
      volume={20},
      number={3},
       pages={317--342},
}
\bib{Kim}{article}{
      author={Kim, Jeong Han},
       title={The Ramsey number $R(3,t)$ has order of magnitude $t^2/\log t$},
        date={1995},
     journal={Random Structures and Algorithms},
      volume={7},
       pages={173--207},
}
\bib{McD}{article}{
      author={McDiarmid, C.},
       title={On the method of bounded differences},
        date={1989},
     journal={Surveys in Combinatorics (Proceedings, Norwich 1989)},
      series={London Math. Soc. Lecture Note Set. 141},
   publisher={Cambridge Univ. Press, Cambridge},
       pages={148--188},
}
\bib{OsthusT01}{article}{
      author={Osthus, Deryk},
      author={Taraz, Anusch},
       title={Random maximal h-free graphs},
        date={2001},
     journal={Random Struct. Algorithms},
      volume={18},
      number={1},
       pages={61\ndash 82},
}


\bib{RucinskiW92}{article}{
      author={Rucinski, Andrzej},
      author={Wormald, Nicholas~C.},
       title={Random graph processes with degree restrictions},
        date={1992},
     journal={Combinatorics, Probability {\&} Computing},
      volume={1},
       pages={169\ndash 180},
}


\bib{Spencer0a}{article}{
      author={Spencer, Joel~H.},
       title={Maximal triangle-free graphs and {R}amsey $r(3,t)$},
        date={1995},
        note={Unpublished manuscript},
}

\bib{SpencerPrivate}{misc}{
      author={Spencer, Joel~H.},
       title={Private communication},
        date={July, 2008},
}

\bib{Wol}{article}{
      author={Wolfovitz, Guy},
       title={Lower bounds for the size of random maximal $H$-free graphs},
        date={2009},
     journal={Electr. J. Comb.},
      volume={16},
}

\end{biblist}
\end{bibdiv}

\appendix

\section{Proof of Fact~\ref{fact:f1}} \label{app:1}
%
Recall that $\frac{\sqrt{\pi}}{2} \erfi(\Phi(x)) = x$, where $\erfi(x)$ is the
imaginary error function, given by, for example, $\erfi(x) =
\frac{2}{\sqrt{\pi}} \sum_{j = 0}^{\infty} \frac{x^{2j+1}}{j!(2j+1)}$.  We have
that $\erfi(x) \to \exp(x^2)/(\sqrt{\pi}x)$ as $x \to \infty$. Hence, it
follows that as $x \to \infty$, $\Phi(x) \to \sqrt{\ln x}$ and $\phi(x) \to
(2x\sqrt{\ln x})^{-1}$.

\begin{enumerate}
\item[(i)]
We first upper bound $\phi(i\delta)$ and $\Phi(i\delta)$.  We have that
$\erfi(x) \ge 0$ if and only if $x \ge 0$.  By the fact that
$\frac{\sqrt{\pi}}{2} \erfi(\Phi(x)) = x$ we have $\erfi(\Phi(i\delta)) = 2 i
\delta / \sqrt{\pi} \ge 0$.  Hence $\Phi(i\delta) \ge 0$.  Therefore
$\phi(i\delta) = \exp(-\Phi(i\delta)^2) \le 1$. Next, note that $\erfi(x)$ is
monotonically increasing with $x$. We also have by $\frac{\sqrt{\pi}}{2}
\erfi(\Phi(x)) = x$ that $\erfi(\Phi(i\delta))$ is monotonically increasing
with $i$. Hence $\Phi(i\delta)$ is monotonically increasing with $i$ and so
$\Phi(i\delta) \le \Phi(I\delta)$. The upper bound on $\Phi(i\delta)$ now
follows since $I\delta \sim n^{\epsilon}$ and so $\Phi(I\delta) \sim
\sqrt{\ln n^\epsilon}$.

Next, we lower bound $\phi(i\delta)$ and $\Phi(i\delta)$ (for $i \ge 1$).
Since $\Phi(i\delta)$ is monotonically increasing with $i$, we have that
$\phi(i\delta)$ is monotonically decreasing with $i$.  Therefore, it remains to
show that $\phi(I\delta) = \Omega(\delta^{1.5})$ and $\Phi(\delta) =
\Omega(\delta)$.  The fact that $\phi(I\delta) = \Omega(\delta^{1.5})$ follows
since $\phi(I\delta) \to 1/(2I\delta\sqrt{\ln I\delta})$.  The fact that
$\Phi(\delta) = \Omega(\delta)$ follows directly from the fact that
$\frac{\sqrt{\pi}}{2} \erfi(\Phi(x)) = x$ and the definition of $\erfi(x)$.  

\item[(ii)] By~(i) we have $\delta \Phi(i\delta)\phi(i\delta) \le \delta \ln n
= o(1)$ and $\delta^2 \phi(i\delta)^2 \le \delta^2 = o(1)$. Hence $\gamma(i) =
o(1)$.  It also follows directly from the definition of $\gamma(i)$ and from
the previous item that $\gamma(i) = \Omega(\delta^5)$.

We now bound $\Gamma(i)$. Since $\Gamma(i)$ is monotonically non-decreasing and
$\Gamma(0) = n^{-30\epsilon}$, it is enough to show that $\Gamma(I) \le
n^{-10\epsilon}$.  We do that by first showing that
$\Gamma(\delta^{-1}\floor{\ln\ln n}) \le n^{-30\epsilon + o(1)}$. For brevity,
we shall assume below that $\floor{\ln\ln n} = \ln \ln n$.

For every $0 \le i \le \delta^{-1}\ln\ln n$, $\Phi(i\delta) \le \ln\ln n$
(crudely) and $\phi(i\delta) \le 1$.  Therefore, we have that for every $0 \le
i \le \delta^{-1}\ln\ln n$, 
\begin{eqnarray*}
\delta \Phi(i\delta) \phi(i\delta) &\le& \delta \ln\ln n, \text{ and} \\
\delta^2\phi(i\delta)^2 &\le& \delta \ln\ln n.
\end{eqnarray*}
Hence, for $0 \le i \le \delta^{-1}\ln\ln n$, $\gamma(i) \le \delta \ln\ln n$ and so
\begin{eqnarray*}
\Gamma(\delta^{-1} \ln\ln n) \le n^{-30\epsilon}  (1 + 10\delta \ln\ln n)^{\delta^{-1}\ln\ln n} 
  = n^{-30\epsilon + o(1)}.
\end{eqnarray*}
Now, note that for every $\delta^{-1} \ln\ln n \le i \le I$,
\begin{eqnarray*}
\delta \Phi(i\delta) \phi(i\delta) &\le& 0.6/i, \text{ and} \\
\delta^2 \phi(i\delta)^2 &\le& 0.6/i,
\end{eqnarray*}
and this follows from the fact that for $\delta^{-1} \ln\ln n \le i \le I$,
$\Phi(i\delta) \phi(i\delta) \sim 1/(2 i\delta)$ and $\phi(i\delta) \le 1/(2
i\delta)$.  Hence, for $\delta^{-1} \ln\ln n \le i \le I$, $\gamma(i) \le
0.6/i$ and so we conclude that
\begin{eqnarray*}
\Gamma(I) \le n^{-30\epsilon + o(1)} \prod_{1 \le i \le I} (1 + 6/i)  
  \le n^{-30\epsilon + o(1)} \cdot \exp(7\ln I)  \le n^{-10\epsilon}.
\end{eqnarray*}
\end{enumerate}

\end{document}